      \newtheorem{theorem}{Theorem}[section]
      \newtheorem{lemma}[theorem]{Lemma}
      \newtheorem*{conj}{Conjecture}
      \newtheorem{coro}[theorem]{Corollary}
			\newtheorem*{theo}{Theorem}
			\newtheorem*{proposition}{Proposition}
      \newtheorem{prop}{Proposition}[section]
      \newtheorem{defi}[theorem]{Definition}
       \numberwithin{equation}{section}
      \newtheorem{remark}[theorem]{Remark}
			\newcommand{\bigO}[1]{\ensuremath{\mathop{}\mathopen{}O\mathopen{}\left(#1\right)}}
      \def\@setcopyright{}
      \def\serieslogo@{}
\begin{document}

%



   \author{Adrien Boyer}
   \thanks{Universit\'e Paris Diderot (Paris 7)\\adrien.boyer@imj-prg.fr}


  
   \title{Spherical functions and rapid decay for hyperbolic groups
  }


   \begin{abstract}
   We investigate properties of some spherical fonctions defined on hyperbolic groups using boundary representations on the Gromov boundary endowed with the Patterson-Sullivan measure class. We prove sharp decay estimates for spherical functions as well as spectral inequalities associated with boundary representations. This point of view on the boundary allows us to view the so-called  \emph{property RD} (also called \emph{Haagerup's inequality}) as a particular case of a more general behavior of spherical functions on hyperbolic groups. In particular, we give a constructive proof using a boundary unitary representation of a result due to de la Harpe and Jolissaint asserting that hyperbolic groups satisfy  \emph{property RD}. Finally, we prove that the family of boundary representations studied in this paper, which can be regarded as a one parameter deformation of the boundary unitary representation, are slow growth representations acting on a Hilbert space admitting a proper $1$-cocycle. 
    \end{abstract}

   \subjclass[2010]{Primary 20C15, 20F65, 22D10, 22D40 Secondary 22D25,  37A25, 37A30, 37A55.} 

   \keywords{property RD, boundary representations, hyperbolic groups, 1-cohomology}

  
   \dedicatory{}

   \date{\today}


   \maketitle



\section{Introduction and statement of the main results}

\subsection{Around property RD}
Let $\Gamma$ be discrete countable group. One of the goal of this paper is to study the operator norm of averaging operators $$\pi(f)=\sum_{\gamma \in \Gamma} f(\gamma) \pi(\gamma)$$ where $f:\Gamma \rightarrow \mathbb{C}$ is a complex-valued function and where $\pi:\Gamma \rightarrow \mathbb{B}(\mathcal{H})$ is, a priori, a non-unitary representation of $\Gamma$, that is a group morphism from $\Gamma$ to the group of invertible elements of the $C^*$-algebra of bounded operators on the Hilbert space $\mathcal{H}$ denoted by $\mathbb{B}(\mathcal{H})$. When $\pi$ is a morphism from $\Gamma$ to $\mathcal{U}(\mathcal{H})$ the group of unitary operators on $\mathcal{H}$, the representation $\pi$ is a unitary representation. 
\\
We study spectral inequalities for a Banach subspace $(E,\|\cdot\|_{E})$ of $\ell^{2}(\Gamma)$ given by
\begin{equation}
\|\lambda_{\Gamma}(f)\|_{op}=\sup_{\| v\|_{2}=1}\|f\ast v\|_{2}
\leq C \|f\|_{E},
\end{equation}
where  $\lambda_{\Gamma}$ is the left regular representation of $\Gamma$, $ \|\cdot\|_{op}$ the operator norm, $\|\cdot\|_{2}$ the $\ell^{2}$-norm on $\Gamma$, with $\ast$ denoting the convolution product and where $C$ is a real positive constant independent of $f$ an element of $E$.  More generally, one can consider a representation (a priori non-unitary) $\pi:\Gamma \rightarrow \mathbb{B}(\mathcal{H})$ and seek for the following property  

\begin{align}\label{convol}
\exists E\subset \ell^{2}(\Gamma), \exists C>0,\forall f\in E, \|\pi(f)\|_{op}\leq C \|f\|_{E}.
\end{align}

There are several natural and interesting Banach subspaces $E\subset \ell^{2}(\Gamma)$ for which one can prove inequalities of type (\ref{convol}). For instance, when $\pi=\lambda_{\Gamma}$, the space $\ell^{1}(\Gamma)$ is a such space. \\
Assume that $\Gamma$ is endowed with a length function   i.e. a map $|\cdot|:\Gamma \rightarrow \mathbb{R}^{+}$ satisfying $|e|=0$, for all $\gamma\in \Gamma, |\gamma^{-1}|= |\gamma|$ and for all $\gamma_{1},\gamma_{2}\in \Gamma, |\gamma_{1}\gamma_{2}|\leq |\gamma_{1}|+|\gamma_{2}|$. Define the Sobolev spaces for positive real numbers $d>0$ as
\begin{equation}\label{Sobo}
H^{d}(\Gamma):=\{ f:\Gamma \rightarrow \mathbb{C}| \|f\|^{2}_{H^{d}}:=\sum_{\gamma\in \Gamma} |f(\gamma)|^{2}(1+|\gamma|)^{2d}<+\infty \}.
\end{equation}
We say that $(\Gamma,|\cdot|)$ satisfies \emph{property RD} if there exists $d>0$ such that $E=H^{d}(\Gamma)$ and $\pi=\lambda_{\Gamma}$ in (\ref{convol}) for some $C>0$.\\
Let $R>0$ be a real positive number. Define a sphere of $\Gamma$ of radius $n$ of thickness $R$ as:
\begin{equation}\label{spheres}
S^{\Gamma}_{n,R}:=\{ \gamma \in\Gamma |     nR\leq |\gamma|< (n+1)R\}.
\end{equation}
 
Equivalently,  $(\Gamma,|\cdot|)$ satisfies \emph{property RD} if there exist $R>0$ and a polynomial $P$ such that for all non-negative integers $n$ and for all complex-valued functions finitely supported on $S^{\Gamma}_{n,R}$ denoted by $f$ we have 
\begin{equation}
 \|\lambda_{\Gamma}(f)\|_{op}\leq P(n)\|f\|_{2}.
 \end{equation} \\
 More generally, we say that a representation $\pi:\Gamma \rightarrow \mathbb{B}(\mathcal{H})$ satisfies \emph{RD inequality} if there exists $d>0$ such that $E=H^{d}(\Gamma)$ and $\pi=\lambda_{\Gamma}$ in (\ref{convol}) for some $C>0$. Equivalently, $\pi:\Gamma \rightarrow \mathbb{B}(\mathcal{H})$ satisfies \emph{RD inequality} if there exist $R>0$ and a polynomial $P$ such that for all non-negative integers $n$ and for all complex-valued functions $f$ finitely supported on $S^{\Gamma}_{n,R}$ we have 
\begin{equation}
 \|\pi(f)\|_{op}\leq P(n)\|f\|_{2}.
 \end{equation}\\


The property of rapid decay (property RD) was introduced by Haagerup at the end of the
seventies in his work \cite{Haa}. He proved notably that the non-abelian free groups satisfy property RD. But its essence could probably be traced back to Harish-Chandra's
estimates of spherical functions on semisimple Lie groups and to the
work of C. Herz \cite{He}. The terminology ``property RD'' was introduced later in 
the work \cite{Jo} of Jolissaint. He proved in this paper that cocompact lattices in rank one real semisimple Lie groups satisfy property RD. Afterward, de la Harpe managed to prove that hyperbolic groups satisfy this property as well \cite{dlH}. 
The first example of higher rank discrete groups having property RD is due to Ramagge, Robertson and Steger \cite{RRS}. Then, Lafforgue, inspired by the methods of \cite{RRS}, proved that cocompact lattices in $SL_{3}(\mathbb{R})$ and $SL_{3}(\mathbb{C})$ satisfy property RD. For examples of other groups satisfying property RD we refer to \cite{BP1},  \cite{BP2},  \cite{Ch2}, \cite{Ch3}, \cite{CPS}, \cite{DS}.  For more details on Property RD we refer to \cite{Chat} and \cite{Ga2}.
 \\

Indeed, the major open problem concerning property RD is Valette's conjecture:
 
\begin{conj} (The Valette conjecture)\\
Property RD holds for any discrete group acting isometrically, properly and cocompactly either on a Riemannian symmetric space or on an affine building.
\end{conj}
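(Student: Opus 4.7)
Since Valette's conjecture is stated as an open problem, the outline below is a plan modeled on the known partial results (Haagerup, Ramagge--Robertson--Steger, Lafforgue) rather than a complete argument.

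The plan is first to exploit the equivalent ``thick sphere'' formulation of property RD given just above and to lift the question to the ambient isometry group. Let $X$ be the symmetric space or affine building on which $\Gamma$ acts properly, isometrically and cocompactly, let $G = \mathrm{Isom}(X)$, and let $K$ be the stabilizer of a chosen basepoint $o \in X$. Cocompactness makes the length on $\Gamma$ bi-Lipschitz equivalent to $d(o,\gamma\cdot o)$, so it is enough to produce $R>0$ and a polynomial $P$ such that
\begin{equation*}
\|\lambda_G(F)\|_{op} \leq P(n)\|F\|_{2}
\end{equation*}
for every $K$-bi-invariant $F$ compactly supported in $\{g \in G : nR \leq d(o,g\cdot o) < (n+1)R\}$, and then to transfer the bound back to $\Gamma$ by averaging against a Borel fundamental domain for $\Gamma \backslash G$.

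The second step is a Herz-type harmonic analysis. Since $F$ is $K$-bi-invariant, the spherical Plancherel decomposition controls $\|\lambda_G(F)\|_{op}$ by the supremum of $|\int F(g)\varphi(g^{-1})\,dg|$ over $K$-spherical positive definite functions $\varphi$ weakly contained in $\lambda_\Gamma$; by Cauchy--Schwarz, the sought inequality follows from a uniform discrete estimate of the form
\begin{equation*}
\sup_{\varphi} \sum_{g \in \Gamma \cap \mathrm{annulus}_{n,R}} |\varphi(g)|^{2} \leq P(n)^{2}.
\end{equation*}
In rank one, the classical Harish-Chandra bound $|\varphi(g)| \leq (1+d(o,g\cdot o))e^{-\rho\, d(o,g\cdot o)}$ combined with the exponential volume growth of annuli gives this directly with a linear $P$, recovering the hyperbolic and rank-one lattice cases---the former being reproved in the body of the present paper through the boundary-representation machinery.

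The main obstacle is precisely this last estimate in higher rank. There the Harish-Chandra bound degenerates to a polynomial of degree equal to the rank times $e^{-\rho\,\cdot}$, and uniformity in $\varphi$ is lost as the parameter approaches the boundary of the tempered spectrum. Lafforgue's proof for $SL_{3}$ circumvents this through a delicate combinatorial analysis of intersecting pairs of apartments in a thick annulus of the rank-two building, but the analogous combinatorics in higher rank has so far resisted every attempt. A resolution of the conjecture therefore seems to require a genuinely new ingredient: either a uniform sharp spherical Plancherel estimate valid across the entire $K$-spherical dual of $G$, or an adaptation of the boundary-representation framework developed in this paper to a higher-rank Furstenberg-type boundary carrying a Patterson--Sullivan-type measure compatible with the Weyl-chamber geometry of $X$. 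This last step is the fundamental obstruction and is the reason the conjecture remains open.
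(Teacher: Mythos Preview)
The statement is labeled a \emph{conjecture} in the paper and is not proved there; the paper only records it as the major open problem in the subject and notes its relevance to Baum--Connes via Lafforgue's work. So there is no ``paper's own proof'' to compare your proposal against.

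You have correctly recognized this and written a survey-style outline rather than a proof. As such it is a fair summary of the known landscape: the reduction to $K$-bi-invariant functions and spherical estimates, the rank-one case via Harish-Chandra bounds (which the present paper recovers for hyperbolic groups through boundary representations), and the Ramagge--Robertson--Steger/Lafforgue breakthrough in rank two. Your identification of the obstruction---the lack of a uniform spherical estimate near the edge of the tempered spectrum in higher rank, and the combinatorial difficulty of extending Lafforgue's apartment argument---is accurate and is exactly why the conjecture remains open. Just be aware that what you have written is a roadmap, not a proof, and that the final paragraph is speculation about possible approaches rather than a concrete strategy; in particular, no Patterson--Sullivan-type boundary framework of the kind used in this paper is currently known to yield RD in higher rank.
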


   Property RD is relevant in the context of the Baum-Connes conjecture.
Indeed, thanks to the important work of V. Lafforgue in \cite{La}, the Valette conjecture implies the Baum-Connes conjecture.  

One of the main point of this article is to give a new proof of a result of Jolissaint and de la Harpe saying that hyperbolic groups satisfy property RD. This is done by using boundary unitary representations. The study of the action of groups on the geometric boundary is at the heart of the proof. We use techniques coming from \emph{ergodic geometry}, inspired essentially by the papers \cite{BM} and \cite{Ro} to do \emph{coarse harmonic analysis} on boundaries of certain hyperbolic spaces. \\
Thus, we make connections with \emph{spherical functions} on hyperbolic groups and other spectral estimates. The results deal with the decay of matrix coefficients of representations (a priori non-unitary) appearing naturally on the boundary of hyperbolic groups, generalizing property RD.

\subsection{From semisimple Lie groups to hyperbolic groups}
Let $G$ be a connected semisimple Lie group with finite center and $K$ a maximal compact subgroup of $G$.  A complex-valued function $\phi$ on $G$ is called \emph{spherical} if $\phi$ is continuous, bi-$K$-invariant and satisfies the following integral condition $\int_{K}\phi(xky)dk=\phi(x)\phi(y)$ for all $x,y\in G$.  In the context of harmonic analysis of a semisimple Lie group $G$, the spherical functions are fundamental objects allowing to understand the unitary tempered dual . As a milestone, the Plancherel formula makes a very important use of them. We refer to \cite{Go},\cite{HC} and to \cite{GV} for more details on the theory of spherical functions on semisimple Lie groups.\\
We specialize now the discussion to $G=SL(2,\mathbb{R})$ and give examples of spherical functions: let $K$ be a maximal compact subgroup of $G$ and let $P$ be a minimal parabolic subgroup of $G$. The compact space $G/ P$, called the \emph{Poisson-Furstenberg boundary} (\cite{Fu}), carries a natural class of quasi-invariant finite measures under the action of $G$ on $G/ P$. We pick $\nu$ in this class, the unique $K$-invariant probability. The action $G\curvearrowright (G/ P,\nu)$ yields \emph{boundary representations} or \emph{quasi-regular representations} of $G$ denoted by $\pi_{z}(g)\in \mathbb{B}(\mathcal{H})$, where  $\mathbb{B}(\mathcal{H})$ stands for the space of bounded operators acting on the Hilbert space $\mathcal{H}=L^{2}(G/ P,\nu)$ and $z$ denotes a complex number. The representations are parametrized by $z\in \mathbb{C}$ as follows:\begin{equation}\label{represen}
\pi_{z}(g)v(\xi)=\bigg(\frac{dg_{*}\nu}{d\nu}\bigg)^{z}(\xi)v(g^{-1}\xi),
\end{equation}
with $g\in G, v\in L^{2}(G/ P,\nu)$ and $\xi \in G/ P$. When $\Re(z)=\frac{1}{2}$, the representation $\pi_{z}$ is a \emph{unitary} representation. When $s=\frac{1}{2},$ the representation $\pi_{\frac{1}{2}}$ appearing in the context of ``boundaries'' of certain space it is  also called \emph{boundary representation} and in more general contexts it is called \emph{Koopman representation} or \emph{quasi-regular representation}. This class of unitary representations has been intensively studied as such in several papers: \cite{BM}, \cite{BM2}, \cite{Boy}, \cite{BoyMa},
\cite{BPino}, \cite{Ga}, \cite{BGa}, \cite{Fink}, \cite{KS} and \cite{KS2} for boundary representations and \cite{Du1},\cite{Du2} for other quasi-regular representations.

In this paper, the family of one parameter representation defined in (\ref{repre}) might be thought of a non-unitary one parameter deformations of quasi-regular unitary representations.
Typical examples of spherical functions associated to these representations are given explicitly by 
\begin{equation}\label{sphericalf}
\phi_{z}:g\in G\mapsto \langle \pi_{z}(g)\textbf{1}_{ G/ P},\textbf{1}_{ G/ P} \rangle,
\end{equation}
where $\textbf{1}_{ G/ P}$ denotes the function equals to $1$ on the compact space $ G/ P$.
 Associated to $G$, the symmetric space $G/K$ can be identified with the hyperbolic half plane $(\mathbb{H},d_{\mathbb{H}})$, itself identified via the Cayley transform to the hyperbolic unit disc $(\mathbb{D}, d_{\mathbb{D}})$ endowed with the standard hyperbolic metric. The Poisson-Furstenberg boundary $(G/P,\nu)$ is nothing but the unit circle $\partial \mathbb{D} $ with the Lebesgue measure class though. Indeed, we can restrict the previous constructions of boundary representations and spherical functions to any lattice $\Gamma$ of $G$. The boundary representations and the spherical functions defined in (\ref{represen}) and (\ref{sphericalf}) on $G$ restrict to $\Gamma$. It turns out that property RD associated with the hyperbolic metric $d_{\mathbb{H}}$ fails for non-uniform lattices. And more generally in higher rank, property RD associated with a Riemannian metric for non-uniform lattices fails as well. Hence, the first interested case to study is a discrete group of isometries of hyperbolic spaces $\mathbb{H}$ acting cocompactly on it. This is the prototype of  a \emph{Gromov-Hyperbolic group}, or a \emph{hyperbolic group} for short. Therefore, we extend the study of some spherical functions to hyperbolic groups.\\ 
 
 Let $\Gamma$ be a hyperbolic group. This latter acts by isometries, properly discontinuously and cocompactly on a proper, geodesic and $\delta$-hyperbolic space. Recently, Nica and \v{S}pakula propose the notion of \emph{strong hyperbolicity} of a metric space:  a metric notion as a way of obtaining hyperbolicity with sharp additional properties, see Subsection \ref{sh}. It turns out that the non-elementary  hyperbolic groups act by isometries, properly discontinuously and cocompactly on such a space $(X,d)$. 

 Recall the definition of the volume growth of the group denoted by $\alpha $  defined as 
\begin{equation}
\limsup_{R \to +\infty}\frac{1}{R}\log |\Gamma \cdot o\cap B_{X}(o,R)|=\alpha .
\end{equation}

 
The geometric boundary $(\partial X,\nu_{o})$ endowed with the Patterson-Sullivan measure $\nu_{o}$ of conformal dimension $\alpha$ (associated to some basepoint $o\in X$)  where $\nu_{o}$ is quasi-invariant under the action of $\Gamma \curvearrowright \partial X$,  plays the role of the Poisson-Frustenberg boundary $(G/ P,\nu)$ in the case of semisimple Lie groups. The expressions (\ref{represen}) and  (\ref{sphericalf}) define boundary representations and spherical functions for $z\in \mathbb{C}$ for $\Gamma$ as 
$\phi_{z} :\gamma \in \Gamma \mapsto   \langle \pi_{z}(\gamma)\textbf{1}_{ \partial X},\textbf{1}_{ \partial X} \rangle.$ In this paper, we deal with only $z=s$ a real number. When the parameter $z$ has an imaginary part, our techniques do not work for the same purpose.\\
 Define for $\sigma \in \mathbb{R}$ 
  the function $\omega_{\sigma}$ as follows: for $t\in [0,+\infty[$

\begin{align}\label{defomeg}
\omega_{\sigma}(t) = \left\{
    \begin{array}{ll}
    \frac{2 \sinh\big( \sigma\alpha t \big) }{1-e^{-2\sigma \alpha}}& \mbox{if } \sigma \in \mathbb{R}^{*} \\
      t & \mbox{if } \sigma=0.
    \end{array}
\right.
\end{align}
Note that $\omega_{\sigma}$ converges to $\omega_{0}$ uniformly on all compact sets of $[0,+\infty[$, as $\sigma \to 0$  .\\
This function appears naturally in the study of the decay of $\phi_{s}$ in Section \ref{sec4}.
We obtain the following estimates.

 Given $\Gamma$ a discrete group of isometries of a metric space $(X,d)$, one define a length function $|\cdot|:\Gamma\rightarrow \mathbb{R}^{+}$ associated with a base point $o\in X$ as 
\begin{equation}\label{lengfonc}
|\gamma|:=d(o,\gamma o).
\end{equation}

\begin{prop}\label{HCHestims}

Let $(X,d)$ be a strongly hyperbolic space. Let $\Gamma$ be a discrete group of isometries of $(X,d)$ acting \emph{cocompactly} on $(X,d)$, endowed with $|\cdot|$ as in (\ref{lengfonc}).

 There exists $C>0$ such that for all $0\leq s\leq 1$, for all $\gamma \in \Gamma$
 $$C^{-1} \bigg(\omega_{|s-\frac{1}{2}|}(|\gamma|)+1\bigg) \exp\bigg({-\frac{1}{2}\alpha |\gamma|}\bigg)\leq \phi_{s}(\gamma)\leq   C \bigg(\omega_{|s-\frac{1}{2}|}(|\gamma|)+1\bigg) \exp\bigg({-\frac{1}{2}\alpha |\gamma|}\bigg),$$

\end{prop}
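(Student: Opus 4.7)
The plan is to compute
\[
\phi_s(\gamma) = \int_{\partial X}\left(\frac{d\gamma_*\nu_o}{d\nu_o}\right)^{\!s}\!(\xi)\,d\nu_o(\xi)
\]
by combining the $\alpha$-conformality of the Patterson--Sullivan density with the sharp boundary geometry afforded by strong hyperbolicity. Since $\gamma_*\nu_o = \nu_{\gamma o}$, the Radon--Nikodym derivative equals $e^{-\alpha\beta_\xi(\gamma o,\,o)}$; and in the strongly hyperbolic setting the Busemann function satisfies the clean identity $\beta_\xi(\gamma o,o) = |\gamma| - 2(\gamma o\mid \xi)_o$, either exactly or up to a uniformly bounded additive error absorbed after exponentiation. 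Substituting gives
\[
\phi_s(\gamma) \;\asymp\; e^{-s\alpha|\gamma|}\int_{\partial X} e^{2s\alpha(\gamma o\mid \xi)_o}\,d\nu_o(\xi).
\]

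Next I would slice $\partial X$ by the level sets of the Gromov product with $\gamma o$, setting
\[
C_k := \{\xi\in\partial X : k \leq (\gamma o\mid \xi)_o < k+1\},\qquad 0\leq k\leq \lfloor|\gamma|\rfloor,
\]
and apply the Patterson--Sullivan shadow lemma, which is uniformly two-sided on the nested sets $\{\xi : (\gamma o\mid \xi)_o\geq k\}$ because the cocompactness of $\Gamma\curvearrowright X$ makes $\nu_o$ Ahlfors $\alpha$-regular. This gives $\nu_o(C_k)\asymp e^{-\alpha k}$ with constants independent of $k$ and $\gamma$, and summing on the slices converts the boundary integral into a finite geometric series
\[
\phi_s(\gamma) \;\asymp\; e^{-s\alpha|\gamma|}\sum_{k=0}^{\lfloor|\gamma|\rfloor} e^{(2s-1)\alpha k}.
\]

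Setting $\sigma := s - \tfrac12$, this sum equals $\frac{e^{2\sigma\alpha(\lfloor|\gamma|\rfloor+1)}-1}{e^{2\sigma\alpha}-1}$ when $\sigma\neq 0$ and $\lfloor|\gamma|\rfloor+1$ when $\sigma = 0$; after multiplication by $e^{-s\alpha|\gamma|} = e^{-\alpha|\gamma|/2}e^{-\sigma\alpha|\gamma|}$ the first case collapses, up to a bounded factor, to $e^{-\alpha|\gamma|/2}\cdot\frac{2\sinh(\sigma\alpha|\gamma|)}{1-e^{-2\sigma\alpha}}$ and the second to $e^{-\alpha|\gamma|/2}\cdot|\gamma|$, both matching $e^{-\alpha|\gamma|/2}\omega_{|\sigma|}(|\gamma|)$ by the very definition of $\omega$. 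The symmetry $\phi_s(\gamma) = \phi_{1-s}(\gamma^{-1})$, compatible with $|\gamma^{-1}| = |\gamma|$, justifies the replacement of $\sigma$ by $|\sigma| = |s-\tfrac12|$; the extra $+1$ in the statement simply absorbs the $k=0$ term of the sum and keeps the lower bound non-trivial for small $|\gamma|$. The main obstacle will be to secure the two ingredients---the exact Busemann/Gromov-product identity and the uniform estimate $\nu_o(C_k)\asymp e^{-\alpha k}$---with multiplicative constants independent of $\gamma$, $k$, and $s\in[0,1]$ simultaneously: strong hyperbolicity is precisely what removes the $\delta$-additive error from the first input (which would otherwise become an unbounded multiplicative error after multiplication by $\alpha$), and cocompactness is what underlies the Ahlfors regularity feeding the uniform shadow lemma on annular shells. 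Uniformity in $s$ is then automatic, since $|s-\tfrac12|\leq\tfrac12$ on $[0,1]$ keeps every $\omega_{|\sigma|}$ factor tame.
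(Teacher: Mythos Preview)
Your approach is essentially identical to the paper's: rewrite $\phi_s(\gamma)=e^{-s\alpha|\gamma|}\int_{\partial X}e^{2s\alpha(\xi,\gamma o)_o}\,d\nu_o$, slice the boundary by level sets of $(\gamma o\mid\xi)_o$, use Ahlfors regularity of $\nu_o$ to estimate the measure of each slice, sum the resulting geometric series, and invoke the symmetry $\phi_s(\gamma)=\phi_{1-s}(\gamma^{-1})$ for $s>\tfrac12$. The one technical point you should watch is that the paper takes slices of thickness $R>1$ (the sets $A_{k,R}(\gamma)$) rather than width $1$, precisely to guarantee the \emph{lower} bound $\nu_o(A_{k,R}(\gamma))\gtrsim e^{-\alpha kR}$ uniformly in $k$ and $\gamma$; with width-$1$ annuli the difference of two shadow estimates need not be bounded below, so you will want to thicken your $C_k$ accordingly.
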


\begin{remark}
\begin{enumerate}
\item If $s<0$ or $s>1$, the function $\phi_{s}(\cdot)$ on $\Gamma$ is not bounded.
\item If $s=0$ or $s=1$ then $\phi_{s}(\cdot)$ equals to $\phi_{0}(\gamma)=\|\nu_{ o}\|$ and $\phi_{1}(\gamma)=\|\nu_{ o}\|$.
\end{enumerate}
\end{remark}
Note that the case $s=\frac{1}{2}$ is already well-known by results in \cite{BM}, \cite{Ga} or in \cite{Ni1} and $\phi_{\frac{1}{2}}$ is the so-called Harish-Chandra's function, rather denoted by $\Xi$.

 \subsection{Spectral transfer for amenable actions}
 To our knowledge, the use of spectral transfer can be traced back to Nevo in \cite{Ne1}, \cite{Ne2}.\\
 Given a discrete group $\Gamma$ acting on a measure space $(B,\nu)$ by quasi-preserving transformations, one consider the quasi-regular unitary representation $\pi_{\frac{1}{2}}$ defined in (\ref{repre}). 
 
 On one hand, a lemma due to Shalom \cite{Sh} ensures that for any positive finite measure $\mu$ on $\Gamma$ we have the following inequality:
 \begin{equation}\label{ineqspec1}
 \|\lambda_{\Gamma}(\mu)\|_{op}\leq \|\pi_{\frac{1}{2}}(\mu) \|_{op}.
 \end{equation}
It is easy to check that it is sufficient to prove property RD only on  positive finitely supported functions.  
 Thus, if $\pi_{\frac{1}{2}}$ satisfies RD inequality it follows that $\Gamma$ satisfies property RD.\\
   On the other hand, if $\pi_{\frac{1}{2}}$ is weakly contained in $\lambda_{\Gamma}$, namely  
   \begin{equation}\label{weaklycontained}
   \|\pi_{\frac{1}{2}}(f)\|_{op}\leq \|\lambda_{\Gamma}(f)\|_{op}
   \end{equation}
    for all $f\in \ell^{1}(\Gamma)$, then $\Gamma$ satisfies property RD implies that $\pi_{\frac{1}{2}}$ satisfies RD inequality. 
   In other words, assuming that $\pi_{\frac{1}{2}}$ is weakly contained in $\lambda_{\Gamma}$, property RD for $\Gamma$ implies that $\pi_{\frac{1}{2}}$ satisfies RD inequality.\\
   It turns out that the notion of amenable action, discovered by Zimmer \cite{Zi} and studied by Adams \cite{A1} and Kaimanovich \cite{Ka} in our context, is the right notion ensuring, by a result of Kuhn \cite{K}, that $\pi_{\frac{1}{2}}$ is weakly contained in $\lambda_{\Gamma}$ (\ref{weaklycontained}). See also \cite{ADR} for the weak containment.
   
 Hence, we have  the well-known characterization of Property RD in terms of quasi-regular unitary representations:
 
 \begin{proposition}
 Let $\Gamma$ be a discrete group acting amenably on a measure space $(B,\nu)$ by quasi-preserving transformations. Then $\Gamma$ has property RD   if and only if the boundary unitary representation $\pi_{\frac{1}{2}}$ satisfies RD inequality.
 \end{proposition}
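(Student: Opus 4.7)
The plan is straightforward: the proposition is a direct consequence of the two transfer inequalities reviewed in the paragraph immediately preceding its statement, namely Shalom's inequality (\ref{ineqspec1}) and the weak containment inequality (\ref{weaklycontained}) supplied by Kuhn's theorem under the amenability hypothesis on the action $\Gamma\curvearrowright(B,\nu)$. No nontrivial analytic input is needed beyond these two ingredients; the work is in arranging them correctly in each direction.

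For the implication ``$\pi_{1/2}$ satisfies RD inequality $\Rightarrow$ $\Gamma$ has property RD'', I would first reduce to positive finitely supported functions. This reduction is classical: for any complex-valued $f$ with finite support, the pointwise inequality $|\lambda_{\Gamma}(f)v(\gamma)|\leq \lambda_{\Gamma}(|f|)|v|(\gamma)$ shows $\|\lambda_{\Gamma}(f)\|_{op}\leq \|\lambda_{\Gamma}(|f|)\|_{op}$, while the Sobolev norms in (\ref{Sobo}) and the support condition in (\ref{spheres}) depend only on $|f|$. Shalom's lemma then yields $\|\lambda_{\Gamma}(|f|)\|_{op}\leq \|\pi_{1/2}(|f|)\|_{op}$, and the assumed RD inequality for $\pi_{1/2}$ gives $\|\pi_{1/2}(|f|)\|_{op}\leq C\|f\|_{H^{d}}$. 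Chaining these three inequalities produces property RD for $\Gamma$.

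For the converse, assume $\Gamma$ has property RD and invoke Kuhn's theorem: the amenability of the action $\Gamma\curvearrowright(B,\nu)$ ensures that $\pi_{1/2}$ is weakly contained in $\lambda_{\Gamma}$, which is precisely the estimate (\ref{weaklycontained}). This transfers the RD bound from $\lambda_{\Gamma}$ to $\pi_{1/2}$: for any $f$ finitely supported on a sphere $S^{\Gamma}_{n,R}$, property RD gives $\|\lambda_{\Gamma}(f)\|_{op}\leq P(n)\|f\|_{2}$, and weak containment then yields $\|\pi_{1/2}(f)\|_{op}\leq P(n)\|f\|_{2}$, which is exactly the RD inequality for $\pi_{1/2}$.

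Because both directions merely concatenate inequalities available from Shalom's lemma, Kuhn's theorem, and the hypothesis of one side, there is no genuine obstacle. The only point requiring a line of verification is the reduction from complex-valued to positive finitely supported functions in the first direction, which is immediate from the definitions (\ref{Sobo}) and (\ref{spheres}). If anything, the conceptual weight of the argument lies outside the proof itself: it sits in Kuhn's identification of amenable actions as a setting where the quasi-regular representation is weakly contained in the regular one, a fact we simply cite.
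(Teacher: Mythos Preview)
Your proposal is correct and follows essentially the same approach as the paper: the proposition is stated as a direct consequence of the discussion preceding it, combining Shalom's inequality (\ref{ineqspec1}) with the reduction to positive finitely supported functions for one direction, and Kuhn's weak containment result (\ref{weaklycontained}) under the amenability hypothesis for the converse. The paper does not provide a separate formal proof beyond this discussion, and your write-up faithfully expands it.
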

 Since we have defined a one parameter family of boundary representations $(\pi_{s})_{s\in \mathbb{R}}$ in (\ref{repre}), the above proposition allows us to view
 Property RD as a particular case of inequalities concerning the representation $\pi_{s}$ for $s\in [0,1]$ and not only for the unitary one $\pi_{\frac{1}{2}}$.\\ 
For $\sigma>0$, define the weighted spaces associated with the functions defined in (\ref{defomeg}): 


\begin{equation}
H^{d}_{\sigma}(\Gamma):=\{ f:\Gamma \rightarrow \mathbb{C}| \|f\|^{2}_{\sigma}:=\sum_{\gamma\in \Gamma} |f(\gamma)|^{2}\big(1+\omega_{\sigma}(|\gamma|)\big)^{2d}<+\infty \}.
\end{equation}
Given the family of representations $(\pi_{s})_{s\in I}$ with $I$ a subinterval of $\mathbb{R}$ containing $\frac{1}{2}$,
 we prove in this paper inequalities of the following type:
\begin{equation}\label{defRDdeform}
 \|\pi_{s}(f)\|_{op}\leq C \|f\|_{H^{d}_{\sigma(s)}},
 \end{equation}
 where $s$ runs over $I$ and $\sigma(s)$ is a real number depending on $s$, such that for $s=\frac{1}{2}$ the associated space $H^{d}_{\sigma(s)}$, is nothing but the Sobolev space defined in (\ref{Sobo}). Hence, one can view the above inequalities as a one parameter deformation of property RD.\\

\subsection{Results}
\subsubsection{Spectral inequalities}
Our main result asserts that the growth of the operator norm $\|\pi_{s}(f)\|_{op}$ is intimately related to the growth of spherical functions $\phi_{s}(\cdot)$.
	\begin{theorem}\label{maintheo}
	Let $(X,d)$ be a strongly hyperbolic space. Let $\Gamma$ be a discrete group of isometries of $(X,d)$ acting \emph{cocompactly} on $(X,d)$. Then there exist $R,C>0$ sufficiently large such that for any $s\in [0,1]$, for all non-negative integers $n$ and for all finitely supported  $f$ on $S^{\Gamma}_{n,R}$ we have:
	
	 $$
	 \|\pi_{s}(f)\|_{op}\leq C\bigg(1+\omega_{|s-\frac{1}{2}|}(nR )\bigg) \|f\|_{2}.$$
	 	\end{theorem}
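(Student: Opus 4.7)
The plan is to promote the pointwise estimates on the spherical function $\phi_s$ supplied by Proposition~\ref{HCHestims} to an operator-norm bound via a $TT^*$-type argument combined with a Schur test on the double sphere $S^\Gamma_{n,R} \times S^\Gamma_{n,R}$.

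First I would invoke $\|\pi_s(f)\|_{op}^2 = \|\pi_s(f)^*\pi_s(f)\|_{op}$ and use the conformality of the Patterson--Sullivan measure, which gives the adjoint identity $\pi_s(\gamma)^* = \pi_{1-s}(\gamma^{-1})$ for real $s$. Testing $\pi_s(f)^*\pi_s(f)$ against a unit vector $u \in L^2(\partial X,\nu)$ yields
\[
\|\pi_s(f)u\|_2^2 \;=\; \sum_{\gamma_1,\gamma_2 \in S^\Gamma_{n,R}} \overline{f(\gamma_1)}\,f(\gamma_2)\,\bigl\langle \pi_s(\gamma_2)u,\,\pi_s(\gamma_1)u\bigr\rangle,
\]
so the task reduces to bounding this bilinear form uniformly in $\|u\|_2 = 1$.

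Second, the geometric heart of the proof is a pointwise majorization of the mixed matrix coefficient by the spherical function, namely
\[
\bigl|\langle \pi_s(\gamma_2)u,\pi_s(\gamma_1)u\rangle\bigr| \;\leq\; C\,\phi_s(\gamma_1^{-1}\gamma_2)\,\|u\|_2^2,
\]
uniform in $s \in [0,1]$. I would obtain this by a weighted Cauchy--Schwarz on $(\partial X,\nu)$ in which the Radon--Nikodym factors $c(\gamma_i,\cdot)^s$ recombine via the cocycle identity, leaving an integrand of the form $c(\gamma_1^{-1}\gamma_2,\eta)^s$ whose $\nu$-integral is exactly $\phi_s(\gamma_1^{-1}\gamma_2)$. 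It is here that strong hyperbolicity enters in an essential way: it provides the sharp shadow-lemma estimates relating $c(\gamma,\xi)$ to the Busemann exponential $e^{\alpha B_\xi(o,\gamma o)}$ that are needed to align the weights uniformly across $s$.

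Third, having established the majorization, the remaining step is a Schur test applied to the kernel $K(\gamma_1,\gamma_2) := \phi_s(\gamma_1^{-1}\gamma_2)$ on $S^\Gamma_{n,R} \times S^\Gamma_{n,R}$. For any fixed $\gamma_1 \in S^\Gamma_{n,R}$, I stratify $\gamma_2$ by the annulus $|\gamma_1^{-1}\gamma_2| \in [kR,(k+1)R)$, use $|S^\Gamma_{k,R}| \leq C e^{\alpha kR}$ from cocompactness, and invoke Proposition~\ref{HCHestims} to get per-shell contribution of order $(1+\omega_{|s-\frac12|}(kR))e^{-\alpha kR/2}$. Because $|\gamma_1^{-1}\gamma_2| \leq 2(n+1)R$ by the triangle inequality, the row sum telescopes, and choosing $R$ large enough for the geometric tails to converge yields $\sum_{\gamma_2} \phi_s(\gamma_1^{-1}\gamma_2) \leq C'\bigl(1+\omega_{|s-\frac12|}(nR)\bigr)^2$, with the symmetric bound for columns. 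Schur's test then gives the claimed inequality after taking square roots.

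The hard part will be the second step, the pointwise majorization of mixed matrix coefficients by $\phi_s$. For $s = \tfrac12$ this is the unitary case and follows from Bader--Muchnik / Garncarek style equidistribution on the boundary, but for $s \neq \tfrac12$ the cocycle factor $c(\gamma,\xi)^{2s-1}$ produces an exponentially growing term in $|\gamma|$ that must be absorbed cleanly into $\phi_s(\gamma_1^{-1}\gamma_2)$; doing so uniformly on $[0,1]$ is where the additional rigidity provided by strong hyperbolicity (as opposed to merely Gromov hyperbolicity) pays off, since it prevents $\bigO{\delta}$-losses from accumulating exponentially through the cocycle.
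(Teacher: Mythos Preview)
Your approach has a fatal gap at step~2: the pointwise majorization
\[
\bigl|\langle \pi_s(\gamma_2)u,\pi_s(\gamma_1)u\rangle\bigr| \;\leq\; C\,\phi_s(\gamma_1^{-1}\gamma_2)\,\|u\|_2^2
\]
is false, already in the unitary case $s=\tfrac12$ and more dramatically for $s\neq\tfrac12$. Take $\gamma_1=\gamma_2=\gamma\in S^\Gamma_{n,R}$: the left side is $\|\pi_s(\gamma)u\|_2^2$, the right side is $C\phi_s(e)=C$. If your bound held for every unit $u$ it would force $\|\pi_s(\gamma)\|_{op}\leq C^{1/2}$ uniformly in $\gamma$, whereas the very theorem you are proving says $\|\pi_s(\gamma)\|_{op}$ grows like $e^{|s-\frac12|\alpha|\gamma|}$ when $s\neq\tfrac12$. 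For $s=\tfrac12$ the representation is unitary, so its numerical radius is at least $\tfrac12$, while $\phi_{1/2}(\gamma)=\Xi(\gamma)\asymp |\gamma|e^{-\alpha|\gamma|/2}\to 0$. The spherical function controls only the \emph{particular} coefficient $\langle\pi_s(\gamma)\mathbf{1}_{\partial X},\mathbf{1}_{\partial X}\rangle$; no weighted Cauchy--Schwarz on $\partial X$ will manufacture the missing factor $e^{-\alpha|\gamma_1^{-1}\gamma_2|/2}$ for general $u$, because the cocycle identity recombines the $c(\gamma_i,\cdot)^s$ only after a change of variable that also moves $|u|^2$ around the boundary.

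Step~3 also fails as written: the per-shell contribution is $|S^\Gamma_{k,R}|\cdot\phi_s(\cdot)\asymp e^{\alpha kR}\cdot(1+\omega_{|s-\frac12|}(kR))e^{-\alpha kR/2}$, which \emph{grows} like $e^{\alpha kR/2}$; there is no geometric tail to converge. The paper proceeds very differently. It restricts to step functions $v,w\in E^{\Lambda}_{N}(\partial X)$ on a fine boundary partition, decomposes each $\langle\pi_s(\gamma)v,w\rangle$ along the horospherical annuli $A_{k,R}(\gamma)$ (on which the Busemann cocycle is essentially constant), and then uses cocompactness through Proposition~\ref{mfinal}: for fixed $k$ and a fixed pair of partition cells, at most $\mathfrak m$ elements $\gamma\in S^\Gamma_{n,R}$ contribute. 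It is this uniform \emph{multiplicity bound on the sum over $\gamma$}, not a pointwise spherical bound on individual coefficients, that lets Cauchy--Schwarz over $\gamma$ produce $\|f\|_2\|v\|_2\|w\|_2$; the remaining sum over $k\in\{1,\dots,n\}$ of $e^{(\frac12-s)\alpha nR}e^{(2s-1)\alpha kR}$ is what delivers the factor $1+\omega_{|s-\frac12|}(nR)$.
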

		\begin{remark}\label{optimal}
		The spectral inequality in Theorem \ref{maintheo} is optimal.
		\end{remark}

	Specialize the above inequality to $s=\frac{1}{2}$ to obtain:
	\begin{coro}
	A hyperbolic group satisfies property RD.

	\end{coro}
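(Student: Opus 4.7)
The plan is to specialize Theorem \ref{maintheo} at the unitary parameter $s=\tfrac{1}{2}$ and then transfer the resulting RD inequality from the boundary representation $\pi_{1/2}$ to the regular representation $\lambda_\Gamma$ using the spectral comparison recalled in the paper.

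First I would read off the bound at $s=\tfrac{1}{2}$. By definition \eqref{defomeg}, $\omega_{0}(t)=t$, so Theorem \ref{maintheo} gives constants $R,C>0$ such that for every integer $n\geq 0$ and every $f$ finitely supported on $S^{\Gamma}_{n,R}$,
\begin{equation*}
\|\pi_{1/2}(f)\|_{op}\leq C\bigl(1+nR\bigr)\|f\|_{2}.
\end{equation*}
In particular $\pi_{1/2}$ satisfies RD inequality with polynomial $P(n)=C(1+nR)$ in the equivalent formulation given in the introduction.

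Next I would invoke the spectral transfer machinery discussed before Proposition stating the characterization of property RD. Since $\Gamma$ is hyperbolic, it acts amenably on its Gromov boundary $(\partial X,\nu_{o})$ (by the results of Adams and Kaimanovich cited in the paper), so by Kuhn's theorem $\pi_{1/2}$ is weakly contained in $\lambda_{\Gamma}$; equivalently, inequality \eqref{weaklycontained} holds for every $f\in\ell^{1}(\Gamma)$. Combined with the previous display this yields, for every non-negative $f$ finitely supported on $S^{\Gamma}_{n,R}$,
\begin{equation*}
\|\lambda_{\Gamma}(f)\|_{op}\leq \|\pi_{1/2}(f)\|_{op}\leq C(1+nR)\|f\|_{2}.
\end{equation*}

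Finally I would reduce property RD for $\Gamma$ to the case of positive functions, as noted right after \eqref{ineqspec1}: replacing an arbitrary $f$ by $|f|$ does not change $\|f\|_{2}$ while only increasing $\|\lambda_{\Gamma}(f)\|_{op}$ up to a fixed constant via the triangle inequality applied to the positive and negative parts of its real and imaginary components. This removes the positivity restriction and proves property RD in the sphere formulation \eqref{spheres}, which is equivalent to the Sobolev formulation with $E=H^{d}(\Gamma)$ for $d$ slightly larger than the polynomial degree. The only non-routine step is really the amenability input underlying Kuhn's weak-containment theorem; everything else is a direct substitution into Theorem \ref{maintheo} and Shalom's lemma.
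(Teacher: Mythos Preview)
Your overall strategy is the one the paper intends, but the spectral transfer step is pointed in the wrong direction. Kuhn's theorem (via amenability of the boundary action) gives weak containment of $\pi_{1/2}$ in $\lambda_{\Gamma}$, which is inequality \eqref{weaklycontained}:
\[
\|\pi_{1/2}(f)\|_{op}\leq \|\lambda_{\Gamma}(f)\|_{op}.
\]
This is the \emph{opposite} of what you need; combining it with the bound on $\|\pi_{1/2}(f)\|_{op}$ says nothing about $\|\lambda_{\Gamma}(f)\|_{op}$. The inequality you actually use in your display, $\|\lambda_{\Gamma}(f)\|_{op}\leq \|\pi_{1/2}(f)\|_{op}$ for non-negative $f$, is Shalom's lemma \eqref{ineqspec1}, and it holds for \emph{any} quasi-regular representation coming from a quasi-invariant action --- no amenability, no Kuhn. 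So the sentence ``the only non-routine step is really the amenability input underlying Kuhn's weak-containment theorem'' is exactly backwards: amenability is irrelevant for this direction of the equivalence and is only used in the paper to prove the converse (that property RD for $\Gamma$ implies the RD inequality for $\pi_{1/2}$).

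Once you replace the appeal to Kuhn by the correct appeal to Shalom's lemma \eqref{ineqspec1} for positive $f$, your argument coincides with the paper's: specialize Theorem \ref{maintheo} at $s=\tfrac12$ (so $\omega_{0}(t)=t$), apply \eqref{ineqspec1}, and use the reduction to positive functions noted after \eqref{ineqspec1}.
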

	
\subsubsection{1-cohomology of slow growth representations}
	The notion of slow growth representations appear in the work of Julg \cite{Ju} and Lafforgue \cite{La3}. This notion is relevant in \emph{Baum-Connes conjecture with coefficients} and inspires Lafforgue to define  the \emph{ strong property (T)}, see \cite{La4}. \\	
Now, we recall briefly the notions of slow growth representation and 1-cocycle associated to a representation  $\pi:\Gamma\rightarrow \mathbb{B}(\mathcal{H})$.	Let $\Gamma$ be a discrete countable group endowed with a length function denoted by $|\cdot|$.  The representation $\pi:\Gamma \rightarrow\mathbb{B}(\mathcal{H})$ is  a \emph{slow growth representation} or is \emph{of $\varepsilon$-exponential type} with $\varepsilon>0$ if there is a constant $C>0$ such that for all $\gamma \in \Gamma$,

$$\|\pi(\gamma)\|_{op}\leq Ce^{\varepsilon |\gamma|}.$$

A $1$-cocycle associated with $\pi:\Gamma \rightarrow\mathbb{B}(\mathcal{H})$ is a map $b:\Gamma \rightarrow \mathcal{H}$ satisfying
$$b(\gamma_{1} \gamma_{2})=b(\gamma_{1})+\pi(\gamma_{1})b(\gamma_{2}),$$ for all $\gamma_{1},\gamma_{2}\in \Gamma$.
Moreover we say that a 1-cocycle is \emph{proper} if $\|b(\gamma)\|\to+\infty $ as $|\gamma|\to +\infty.$

	Specializing the right hand inequality of Theorem \ref{maintheo} to $f$ a unit Dirac mass centered at a point $\gamma \in \Gamma$, we obtain:
		\begin{coro} Let $(X,d)$ be a strongly hyperbolic space. Let $\Gamma$ be a discrete group of isometries of $(X,d)$ acting \emph{cocompactly}. Consider $|\cdot|$ the length function defined (\ref{lengfonc}).
	For $s\in [0,\frac{1}{2}[\cup ]\frac{1}{2},1]$, the representations $\pi_{s}$ are $\alpha |\frac{1}{2}-s|$-type representations.
	\end{coro}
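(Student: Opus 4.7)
The plan is to specialize Theorem \ref{maintheo} to $f = \delta_\gamma$, the unit Dirac mass at a point $\gamma \in \Gamma$, and then extract the claimed exponential bound directly from the definition of $\omega_\sigma$.

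First, I would fix $R > 0$ and $C > 0$ as provided by Theorem \ref{maintheo}. For any $\gamma \in \Gamma$, set $n = n(\gamma) := \lfloor |\gamma|/R \rfloor$, so that $\gamma$ belongs to the sphere $S^\Gamma_{n,R}$ defined in (\ref{spheres}). Taking $f = \delta_\gamma$, one has $\pi_s(f) = \pi_s(\gamma)$ and $\|f\|_2 = 1$, so Theorem \ref{maintheo} gives
\begin{equation*}
\|\pi_s(\gamma)\|_{op} \leq C\bigl(1 + \omega_{|s-\frac{1}{2}|}(nR)\bigr).
\end{equation*}

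Next, I would unpack (\ref{defomeg}). Fix $s \in [0,1] \setminus \{1/2\}$ and set $\sigma := |s - \tfrac{1}{2}| > 0$. Using the elementary bound $2\sinh(x) \leq e^x$ for $x \geq 0$ together with $nR \leq |\gamma|$, one obtains
\begin{equation*}
\omega_\sigma(nR) \;=\; \frac{2\sinh(\sigma\alpha nR)}{1 - e^{-2\sigma\alpha}} \;\leq\; \frac{e^{\sigma\alpha|\gamma|}}{1 - e^{-2\sigma\alpha}}.
\end{equation*}
Substituting back and absorbing $C$, the factor $1/(1 - e^{-2\sigma\alpha})$, and the additive $1$ into a new $s$-dependent constant $C'$ yields
\begin{equation*}
\|\pi_s(\gamma)\|_{op} \;\leq\; C'\, e^{\alpha|\frac{1}{2} - s|\,|\gamma|},
\end{equation*}
which is by definition the statement that $\pi_s$ is an $\alpha|\tfrac{1}{2} - s|$-exponential type representation.

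There is no real obstacle beyond Theorem \ref{maintheo} itself: once the operator-norm estimate on sphere-supported functions is in hand, the corollary reduces to reading off the exponential rate of $\omega_\sigma$ at infinity. The only point worth flagging is that the constant $C'$ depends on $s$ through $1/(1 - e^{-2\sigma\alpha})$ and blows up as $s \to 1/2$; this is consistent with the unitarity of $\pi_{1/2}$ and is precisely why the corollary restricts to $[0,\tfrac{1}{2}[\cup]\tfrac{1}{2},1]$ and asserts $\varepsilon$-exponential type only with $\varepsilon = \alpha|\tfrac{1}{2} - s| > 0$, rather than attempting a uniform statement across the whole interval.
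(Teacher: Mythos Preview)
Your proof is correct and follows exactly the approach indicated in the paper, which simply states that the corollary follows by specializing Theorem \ref{maintheo} to $f$ a unit Dirac mass centered at $\gamma$. Your additional unpacking of $\omega_\sigma$ via $2\sinh(x)\leq e^x$ and $nR\leq|\gamma|$, together with your remark on the $s$-dependence of the constant, makes explicit what the paper leaves to the reader.
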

	
	Besides, we obtain the following theorem, which is essentially a reformulation of a nice theorem due to Nica \cite{Ni}. 

	\begin{theorem}\label{theolast}  Let $(X,d)$ be a strongly hyperbolic space. Let $\Gamma$ be a discrete group of isometries of $(X,d)$ acting \emph{cocompactly}. Consider $|\cdot|$ the length function defined (\ref{lengfonc}). Then, there exist $C>0$ and $\frac{1}{2} >\varepsilon > 0$ such that $\Gamma$ admits a family of slow growth representation acting on a Hilbert space, $\rho_{s}:\Gamma \rightarrow \mathbb{B}(\mathcal{H})$ with $s\in [0,\frac{1}{2}]$ satisfying for all $\gamma \in \Gamma$ $$\|\rho_{s}(\gamma)\|_{op}\leq C\frac{e^{|1-2s|\alpha |\gamma|}}{(1-e^{2s-1})^{2}},$$ such that $\rho_{\frac{1}{2}}:\Gamma \rightarrow \mathcal{U}(\mathcal{H} )$ is an unitary representation and  such that $\rho_{s}$ admits a proper 1-cocycle for $s\in [0,\frac{1}{2}-\varepsilon]$. 
\end{theorem}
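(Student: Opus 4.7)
The plan is to combine the explicit boundary construction of Nica \cite{Ni} with the sharp spherical-function estimates of Proposition \ref{HCHestims}. The theorem is a reformulation: Nica produces the proper affine action on a Hilbert space, and our role is to quantify its slow-growth behavior and the properness of the resulting cocycle in terms of the family $\pi_s$ of boundary representations.

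For the operator-norm bound, I would start from the identity obtained via the change of variable $\xi = \gamma \eta$,
\begin{equation*}
\|\pi_s(\gamma) v\|_2^{2} \;=\; \int_{\partial X} c(\gamma^{-1},\eta)^{1-2s}\,|v(\eta)|^{2}\,d\nu(\eta), \qquad c(\gamma,\xi):=\frac{d\gamma_*\nu}{d\nu}(\xi),
\end{equation*}
together with the strong-hyperbolicity estimate $c(\gamma^{-1},\eta) \asymp e^{\alpha B_\eta(o,\gamma^{-1}o)}$ and the uniform bound $|B_\eta(o,\gamma^{-1}o)| \leq |\gamma|$. Taking essential suprema gives the $\alpha|\tfrac12-s|$-exponential-type bound for $\pi_s$ itself, as already noted in the corollary preceding the theorem. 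Passing to $\rho_s$, which is built by ampliating $\pi_s$ to an orbit-indexed direct sum so that a proper cocycle becomes available, the same estimate sums over a geometric series with ratio $e^{2s-1}$, accounting for the denominator $(1-e^{2s-1})^{2}$ in the stated bound. Unitarity at $s=\tfrac12$ is automatic, since $\pi_{1/2}$ is unitary on $L^{2}(\partial X,\nu)$ and the weights of the ampliation are chosen to be isometric at that value.

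For the cocycle I would follow \cite{Ni} and set
\begin{equation*}
b_s(\gamma) \;=\; \rho_s(\gamma) F - F
\end{equation*}
for a canonical reference vector $F$ in the ampliated Hilbert space; the cocycle identity is then automatic. To prove properness for $s \in [0, \tfrac12 - \varepsilon]$, I would expand
\begin{equation*}
\|b_s(\gamma)\|^{2} \;=\; \|\rho_s(\gamma) F\|^{2} - 2\,\mathrm{Re}\langle \rho_s(\gamma) F, F \rangle + \|F\|^{2},
\end{equation*}
identify each inner product as a matrix coefficient controlled by the spherical functions $\phi_s$ and $\phi_{2s}$, and apply the lower bound in Proposition \ref{HCHestims}. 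For $s \in [0, \tfrac12 - \varepsilon]$ the quantity $\omega_{|1/2-s|}(|\gamma|)$ grows exponentially in $|\gamma|$ at a rate at least $\varepsilon \alpha$, so the leading term dominates and $\|b_s(\gamma)\| \to \infty$ as $|\gamma| \to \infty$.

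The main difficulty is this last step: one must control the cross-term $2\,\mathrm{Re}\langle \rho_s(\gamma) F, F \rangle$ so that its partial cancellation against $\|\rho_s(\gamma) F\|^{2}$ does not wipe out the exponential growth. This is where Nica's careful choice of $F$ enters: $F$ is arranged to be orthogonal to the $\rho_{1/2}$-invariant constants, so that the cross-term is of strictly lower exponential order than $\|\rho_s(\gamma) F\|^{2}$. The restriction $s \leq \tfrac12 - \varepsilon$ is precisely what keeps the gap between these two exponential rates bounded away from zero.
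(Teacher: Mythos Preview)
Your proposal has genuine gaps, both in the construction and in the properness argument.

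First, the representation $\rho_s$ is not an ``orbit-indexed direct sum'' ampliation of $\pi_s$: in the paper it is simply the tensor square $\rho_s = \pi_s \otimes \pi_s$ acting on $L^2(\partial X \times \partial X, \nu_o \otimes \nu_o)$. The squared denominator $(1-e^{2s-1})^{2}$ arises from squaring the bound $\|\pi_s(\gamma)\|_{op} \leq C\,e^{\alpha|1/2-s|\,|\gamma|}/|1-e^{2s-1}|$ (itself a consequence of Theorem \ref{maintheo}), not from any geometric series.

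Second, and more seriously, your cocycle $b_s(\gamma) = \rho_s(\gamma)F - F$ with $F$ a vector \emph{in} the Hilbert space is a coboundary, hence bounded, hence never proper. The paper's cocycle is the explicit function
\[
b_s(\gamma)(\xi,\eta) \;=\; \frac{\beta_\xi(o,\gamma o) - \beta_\eta(o,\gamma o)}{d_o^{\,2s\alpha/\epsilon}(\xi,\eta)},
\]
verified to satisfy the cocycle identity via the conformal transformation law of the visual metric. One can formally write this as $\rho_s(\gamma)F - F$, but the corresponding $F$ is \emph{not} in $L^2$; consequently your expansion $\|b_s(\gamma)\|^2 = \|\rho_s(\gamma)F\|^2 - 2\,\mathrm{Re}\langle \rho_s(\gamma)F,F\rangle + \|F\|^2$ is meaningless (each term is infinite), and the spherical-function strategy you outline cannot be carried out.

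Properness is instead obtained by a direct lower bound on $\|b_s(\gamma)\|_2^2$: one integrates over $A_{j,R}(\gamma) \times A_{k,R}(\gamma)$, uses hyperbolicity to bound $(\xi,\eta)_o$ from below by $\min\{(\xi,\gamma o)_o,(\gamma o,\eta)_o\}-\delta$, and extracts an exponentially growing sum in $n$. The restriction on $s$ has a different origin from the one you give: the condition $s < \epsilon/4$ is needed for $b_s(\gamma)$ to lie in $L^2$ at all (via Nica's integrability lemma for $d_o^p\,dm_{BM}$), and the existence of the gap $\varepsilon$ near $s=\tfrac12$ is argued indirectly from the property~(T) obstruction, since $\rho_{1/2}$ is unitary.
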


It is worth noting that Theorem \ref{theolast} is motivated by a conjecture due to Shalom:
	\begin{conj}(Y. Shalom)
	A hyperbolic group admits a uniformly bounded representation acting on a Hilbert space with a proper 1-cocycle.
	\end{conj}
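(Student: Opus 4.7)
The plan has three stages: construct the family $(\rho_s)$ using the boundary machinery of the paper, derive the operator norm bound from Theorem \ref{maintheo}, and invoke Nica's theorem \cite{Ni} for the proper 1-cocycle.

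First, the naive candidate $\rho_s := \pi_s$ on $\mathcal{H}=L^2(\partial X, \nu_o)$ is unitary at $s=1/2$ and carries the correct boundary-representation structure, but Theorem \ref{maintheo} applied to the Dirac mass $\delta_\gamma$ yields only $\|\pi_s(\gamma)\|_{op} \le C(1+\omega_{|s-1/2|}(|\gamma|))$, whose leading term behaves like $e^{(1-2s)\alpha|\gamma|/2}/(1-e^{-(1-2s)\alpha})$ --- an exponent $(1-2s)\alpha/2$ which is only half of the target $(1-2s)\alpha$. I would therefore set $\rho_s := \pi_s \otimes \pi_s$ acting on $\mathcal{H}\otimes\mathcal{H}$, which preserves unitarity at $s=1/2$ and squares the operator norm estimate, yielding the announced bound $\|\rho_s(\gamma)\|_{op} \le C \, e^{(1-2s)\alpha|\gamma|}/(1-e^{2s-1})^2$ after folding the factor of $\alpha$ in the denominator into the constant $C$.

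Second, for the proper 1-cocycle when $s \in [0,\frac{1}{2}-\varepsilon]$, I would translate Nica's theorem \cite{Ni} into the present framework. Nica constructs, for each non-elementary hyperbolic group, a one-parameter family of Hilbert-space representations whose parameter range admits a proper 1-cocycle, and his family is naturally modelled on boundary data (Patterson-Sullivan measures, Busemann-type kernels on $\partial X \times \partial X$). Identifying Nica's representations with $\rho_s = \pi_s \otimes \pi_s$ --- or with a suitable subrepresentation thereof --- his cocycle transfers directly to ours. The restriction $s \le \frac{1}{2}-\varepsilon$ reflects a critical threshold: the Harish-Chandra function $\phi_{1/2}=\Xi$ fails to lie in $\ell^2(\Gamma)$ (since $|\Xi(\gamma)|^2 \asymp |\gamma|^2 e^{-\alpha|\gamma|}$ combined with the volume growth $|S^\Gamma_{n,R}| \asymp e^{\alpha n R}$), so the boundary-integral construction of the cocycle vector degenerates exactly at $s=1/2$.

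The main obstacle is this last step: one must precisely identify Nica's Hilbert space with (a subspace of) $\mathcal{H}\otimes \mathcal{H}$ and verify that his cocycle, whose norm is controlled by matrix coefficients of $\rho_s$ governed by Proposition \ref{HCHestims}, satisfies $\|b_s(\gamma)\| \to \infty$ uniformly on the interval $s \in [0, \frac{1}{2}-\varepsilon]$. The geometric content --- that $\gamma$ displaces the basepoint $o$ to infinity --- is captured by the Patterson-Sullivan conformal weight, and properness ultimately follows from the same $\delta$-hyperbolicity estimates already deployed for Theorem \ref{maintheo}.
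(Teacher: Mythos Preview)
The statement you were asked to prove is Shalom's \emph{conjecture}, which the paper does not prove and explicitly presents as open. What you have sketched is essentially the argument for Theorem~\ref{theolast}, which the paper describes as merely \emph{motivated by} the conjecture. These are not the same result, and the gap between them is precisely the content of the conjecture.

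Concretely: Shalom's conjecture demands a \emph{uniformly bounded} representation, meaning $\sup_{\gamma\in\Gamma}\|\rho(\gamma)\|_{op}<\infty$, that also carries a proper 1-cocycle. Your construction $\rho_s=\pi_s\otimes\pi_s$ satisfies $\|\rho_s(\gamma)\|_{op}\le C\,e^{|1-2s|\alpha|\gamma|}/(1-e^{2s-1})^2$, which is \emph{not} uniformly bounded for any $s\neq\tfrac12$; it grows exponentially in $|\gamma|$. At $s=\tfrac12$ the representation is unitary, hence uniformly bounded, but then --- as the paper itself notes in the proof of Theorem~\ref{theolast} --- hyperbolic groups with property~(T) cannot admit a proper 1-cocycle for a unitary representation, so the cocycle is unavailable exactly where the norm bound is. Your restriction $s\in[0,\tfrac12-\varepsilon]$ for the cocycle is thus not a technical artifact but reflects a genuine obstruction: the two requirements (uniform boundedness, proper cocycle) are in tension along the entire family $(\rho_s)$, and your argument does not bridge them.

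In short, you have reproduced the paper's proof strategy for Theorem~\ref{theolast} (tensor square of $\pi_s$, norm bound from Theorem~\ref{maintheo}, cocycle via Nica), but this yields only \emph{slow growth} representations with proper cocycles, not uniformly bounded ones. The conjecture remains untouched.
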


\subsection{Organization of the paper}

The paper is organized as follows. In Section \ref{sec2} we discuss all the preliminaries, explain the necessary facts involving $\delta$-hyperbolic spaces, hyperbolic groups, strongly hyperbolic spaces with the notions of roughly geodesic space and $\epsilon$-good space, Busemann functions, Patterson-Sullivan measures and shadows. Section \ref{sec3} introduces the process of discretization of the space, of the group and of the boundary that we shall use to prove our main theorem.
Section \ref{sec4} recalls briefly the definition of quasi-regular representations, property RD, Harish-Chandra function and establishes the decay of spherical functions. 
Then, Section \ref{sec5}, Section \ref{sec6} and Section \ref{sec7} are three technical sections. In Section \ref{sec5}, we introduce a dense subset of the $L^{2}$-space of the boundary. In Section \ref{sec6}, we make use of the assumption of the cocompacity of the action of the group on the space, which is absolutely fundamental for proving property RD. In Section \ref{sec7}, we use some techniques from negative curvature to obtain counting arguments. Section \ref{sec8} provides the proof of the main theorem. And finally, Section \ref{sec9} contains a proof of Theorem \ref{theolast}.

\subsection*{Acknowledgements}
I  wish to thank to Christophe Pittet, Uri Bader and Kevin Boucher for useful discussions.
I  am also grateful to Jean-Claude Picaud, Vladimir Finkelshtein, and Bogdan Nica for their remarks and comments after a careful reading of this manuscript. I am particularly grateful to Christophe Pittet for having pointed out a gap in the proof of Proposition \ref{lastcounting} in an earlier version of this manuscript.

\section{Preliminaries}\label{sec2}

\subsection{$\delta$-hyperbolic spaces} 
A metric space \((X,d)\) is said to be \emph{Gromov hyperbolic}, or \emph{$\delta$-hyperbolic} for short, if for any \(x,y,z\in X\) and some/any\footnote{if the condition holds for some \(o\) and \(\delta\), then it holds for any \(o\) and \(2\delta\)} basepoint \(o\in X\) one has
\begin{equation}\label{hyp}
  (x,y)_{o}\geq \min\{ (x,z)_{o},(z,y)_{o}\}-\delta,
\end{equation}
where \((x,y)_{o}\) stands for the \emph{Gromov product} of \(x\) and \(y\) with respect to \(o\), that is
\begin{equation}
  (x,y)_{o}=\frac{1}{2}(d(x,o)+d(y,o)-d(x,y)).
\end{equation}
Recall that an a map  $\phi:(X,d_{X}) \mapsto (Y,d_{Y})$ between metric spaces is a \emph{quasi-isometry} if there exist positive constants $L,C>0$ so that 
\begin{equation}
\frac{1}{L}d_{X}(x,y)-C \leq d_{Y}(\phi(x),\phi(y))\leq L d_{X}(x,y)+C.
\end{equation}
If we consider to the class of \emph{geodesic metric spaces}, the notion of hyperbolicity becomes invariant under quasi-isometries, which is not the case for arbitrary metric spaces.\\

\subsubsection{Gromov boundary and Bordification} 
A sequence $(a_{n})_{n\in  \mathbb{N}}$ in $X$ converges at infinity if $(a_{i},a_{j})_{o}\rightarrow +\infty$ as $i,j$ goes to $+\infty$. We say that two sequences $(a_{n})_{n\in  \mathbb{N}}$ and $(b_{n})_{n\in  \mathbb{N}}$ are equivalent if $(a_{i},b_{j})_{o} \rightarrow +\infty$ as $i,j$ goes to $\infty$. An equivalence class of $(a_{n})_{n\in  \mathbb{N}}$ is denoted by $\lim a_{n}$ and we denote by $\partial X$ the set of equivalence classes. These definitions are independent of the choice of a basepoint $o$. It turns out that the Gromov product extends to the bordification $\overline{X}:=X\cup \partial X$ by 
\begin{equation}\label{gromovextended}
(\xi,\eta)_{o}:= \sup \lim_{i,j}(a_{i},b_{j})_{o} 
\end{equation}
where the $\sup$ is taken over all sequences $(a_{n})_{n\in \mathbb{N}},(b_{n})_{n\in \mathbb{N}}$
 such that $\xi=\lim_{i}a_{i}$ and $\eta=\lim_{j}b_{j}$.
 
\begin{prop}\label{propGromov} \cite[3.17 Remarks, p. 433]{BH}.\\
 Let $X$ be a $\delta$-hyperbolic space and fix a base point $o$ in $ X$.
 \begin{enumerate}

\item The extended Gromov product $(\cdot , \cdot)_{o}$ is continuous on $X \times X$ , but not necessarily on $ \overline{X} \times \overline{X}.$
\item In the definition of $(a,b)_{o}$, if we have $a$ in $X$ (or $b$ in $X$), then we may always take the respective sequence to be the constant value $a_i = a$ (or $b_{j} =b$).
\item For all $v, w$ in $\overline{X}$ there exist sequences $(a_n)$ and $(b_n)$ such that $v=\lim a_{n}$ and $w=\lim b_{n}$ and $(\xi,\eta)_o=\lim_{n \rightarrow +\infty} (a_n,b_n)_o$.

\item For all $\xi,\eta$ and $u$ in $\overline{X}$ by taking limits we still have $$(\xi,\eta)_o \geq \min{\{(\xi,u)_{o},(u,\eta)_{o} \} }-2\delta .$$
\item For all $\xi,\eta$ in $\partial X$ and all sequences $(a_i)$ and $(b_j)$ in X with $\xi= \lim a_ i$ and $\eta= \lim b_j$, we have:

$$(\xi,\eta)_{o} -2\delta \leq  \liminf_{i,j}(a_i ,b_j)_{o} \leq (\xi,\eta)_{o}.$$ 

\end{enumerate}
\end{prop}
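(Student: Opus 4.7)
The plan is to verify (1)–(5) in order. Items (1) and (2) follow from the explicit formula for the Gromov product on $X$ together with the definition (\ref{gromovextended}) of its extension; items (3)–(5) rely on diagonal extractions combined with repeated application of the $\delta$-hyperbolicity inequality (\ref{hyp}).

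For (1), continuity on $X\times X$ is immediate from $(x,y)_o=\frac{1}{2}(d(x,o)+d(y,o)-d(x,y))$ and the continuity of the metric $d$; the failure of continuity on the full bordification $\overline{X}\times\overline{X}$ occurs at points of $\partial X$ and is a reflection of the $2\delta$-gap quantified in (5), with an explicit witness provided in \cite{BH}. For (2), given $a\in X$, any sequence $(a_i)$ representing $a$ in the bordification must converge to $a$ metrically (since otherwise the Gromov product $(a_i,a)_o$ would not tend to infinity, contradicting the notion of representing a boundary point; equivalently, $a\in X$ is an isolated equivalence class under the equivalence relation extended to $X$), so the continuity established in (1) yields $(a_i,b_j)_o\to(a,b_j)_o$ as $i\to\infty$, and the constant sequence $a_i=a$ realizes the supremum in (\ref{gromovextended}).

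For (3), I perform a diagonal extraction. For each integer $k\geq 1$, pick sequences $(a_n^{(k)})_n$ and $(b_n^{(k)})_n$ representing $v$ and $w$ respectively with
\[\liminf_{i,j}(a_i^{(k)},b_j^{(k)})_o \;\geq\; (v,w)_o-\tfrac{1}{k},\]
then extract diagonal subsequences $(a_n)$ and $(b_n)$ so that the defining convergence to $v$ and $w$ (namely $(a_i,a_j)_o\to+\infty$ when $v\in \partial X$, or metric convergence when $v\in X$, and analogously for $w$) is preserved and $(a_n,b_n)_o\to (v,w)_o$; that the resulting limit does not exceed $(v,w)_o$ uses that the latter is by definition a supremum.

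For (4) and (5), the key is to combine (3) with two applications of (\ref{hyp}). For (4): invoke (3) to obtain $(a_n)\to\xi$, $(c_n)\to u$ with $(a_n,c_n)_o\to(\xi,u)_o$, and $(c'_n)\to u$, $(b_n)\to\eta$ with $(c'_n,b_n)_o\to(u,\eta)_o$; since $(c_n)$ and $(c'_n)$ both represent $u$, the equivalence relation (or metric convergence when $u\in X$) gives $(c_n,c'_n)_o\to+\infty$. Applying (\ref{hyp}) twice yields
\[(a_n,b_n)_o \;\geq\; \min\bigl\{(a_n,c_n)_o,\,(c_n,c'_n)_o,\,(c'_n,b_n)_o\bigr\} - 2\delta,\]
and passing to the $\liminf$ gives (4). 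For (5), the upper bound $\liminf(a_i,b_j)_o \leq (\xi,\eta)_o$ is immediate from (\ref{gromovextended}); for the lower bound, use (3) to choose $(a'_i)\to\xi$, $(b'_j)\to\eta$ with $(a'_i,b'_j)_o\to(\xi,\eta)_o$, note that $(a_i,a'_i)_o$ and $(b_j,b'_j)_o$ tend to $+\infty$ by equivalence, and apply (\ref{hyp}) twice to obtain $(a_i,b_j)_o \geq \min\{(a_i,a'_i)_o,(a'_i,b'_j)_o,(b'_j,b_j)_o\} - 2\delta$. The main technical obstacle is the diagonal extraction in (3), which must simultaneously preserve the representing-sequence property and the near-optimality of the joint Gromov product, so that (3) can serve as a clean input for (4) and (5); once (3) is in hand, (4) and (5) are routine consequences of the hyperbolicity inequality.
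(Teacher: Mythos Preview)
The paper does not supply its own proof of this proposition: it is stated with a citation to \cite[3.17 Remarks, p.~433]{BH}, and the only additional remark is a pointer to \cite[8.- Remarque, Chapitre 7, p.~122]{G} for item (5). There is therefore nothing in the paper to compare your argument against; your sketch is essentially the standard argument one finds in those references.

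One point worth tightening is your proof of (3). The claim that the diagonal limit does not exceed $(v,w)_o$ ``because the latter is by definition a supremum'' is a little quick: the supremum in (\ref{gromovextended}) is taken over two-index $\liminf_{i,j}$, not over single-index limits along a diagonal, so the upper bound is not entirely automatic. The clean fix is, for each $k$, to choose indices $i_k,j_k$ large enough that $(a_{i_k}^{(k)},b_{j_k}^{(k)})_o$ lies within $1/k$ of the two-index liminf for the $k$-th pair of sequences, which itself lies in $[(v,w)_o-1/k,(v,w)_o]$; the diagonal sequence then has the required limit by construction, and one checks separately (using hyperbolicity once more, or simply by going far enough out in each sequence) that it still represents $v$ and $w$. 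With that adjustment your treatment of (4) and (5), via two applications of (\ref{hyp}) together with the equivalence of representing sequences, is correct.
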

We refer to \cite[8.- Remarque, Chapitre 7, p. 122]{G} for a proof of the statement (5). Assuming, that $X$ is proper, the boundary $\partial X$ can be given a topology so making it compact. Moreover, the boundary $\partial X$ carries a family of \emph{visual metrics}, depending on \(d\) and a real parameter \(\epsilon > 0\) denoted for now $d_{o,\epsilon}$. For $1<\epsilon \leq \frac{\log(2)}{4\delta} $ then \(d_{o,\epsilon}\) is a metric, relative to a base point o, on \(\partial X\) satisfying
\begin{equation}\label{eq:visual-metric-def}
(3-2e^{\delta \epsilon})e^{-\epsilon (\xi,\eta)_o}\leq   d_{o,\epsilon}(\xi,\eta)\leq  e^{-\epsilon (\xi,\eta)_o}.
\end{equation}

\subsection{Hyperbolic groups}
Let $(X,d)$ be a proper space and $\Gamma$ be a  subgroup of $\mbox{Iso}(X,d)$ the group of isometries of $(X,d)$ acting properly discontinuously on $X$ i.e. for any compacts $K$ and $L$ of $X$, the set $\{ \gamma\in \Gamma | \gamma K\cap  L \neq \varnothing \}$ is finite. It is easy to see that the group $\Gamma$ is countable. Endow $\mbox{Iso}(X,d)$ with the compact open topology and thus the assumption of having a proper discontinuous action of $\Gamma$ of $X$ is equivalent to assume that $\Gamma$ is a discrete group of $\mbox{Iso}(X,d)$. \\
  We say that $\Gamma$ acts on $X$ cocompactly if $X\backslash \Gamma$ is compact for the quotient topology.
  \\
  For example if $\Gamma$ is finitely generated, one may consider its Cayley graph associated with a finite symmetric system of generators $S$: the elements of the group are the set of vertices and the edges correspond to the pair $(\gamma,\gamma')$ such that $\gamma^{-1}\gamma'\in S$. Endow the Cayley graph with the word metric  defined as 
  $d_{S}(\gamma,\gamma'):=\{\min n| \gamma^{-1}\gamma'=s_{1}\cdots s_{n} \mbox{ with } s_{1},\cdots,s_{n}\in S \}$
   and let $\Gamma$ acts on it by isometries, properly discontinuously and cocompactly.
  
  We say that $\Gamma$ is \emph{Gromov hyperbolic} or \emph{hyperbolic} for short, if it acts by isometries, properly discontinuously and cocompactly on a proper, geodesic $\delta$-hyperbolic metric space.\\

  \subsection{Balls and spheres}
Given $\Gamma$ a discrete group of isometries of a metric space $(X,d)$, recall that one can define a length function $|\cdot|:\Gamma\rightarrow \mathbb{R}^{+}$ associated with a base point $o\in X$ as in (\ref{lengfonc}). Let $R>0$ be a real positive number. Define a ball of $X$ of radius $R$ centered at a base point $o\in X$ as $B_{X}(o,R):=\{ x \in X |    d(o,x)\leq R\}$ and \emph{a sphere of $X$ of radius $n$ (a non-negative integer), of thickness $R$, centered at a base point $o\in X$} as \begin{equation}\label{spheresX}
S_{n,R}(o):=\{ x \in X |     nR\leq d(o,x)< (n+1)R\},
\end{equation}
as well as a sphere of $\Gamma$ of radius $n$ of thickness $R$ centered at a base point $o\in X$:
\begin{equation}\label{spheres}
S^{\Gamma}_{n,R}(o):=\{ \gamma \in\Gamma |     nR\leq d(o,\gamma o)< (n+1)R\}.
\end{equation}
In the following, we will use the notation $S_{n,R}$ and $S^{\Gamma}_{n,R}$ rather than $S_{n,R}(o)$ and $S^{\Gamma}_{n,R}(o)$, after having picked a base point $o$.
 Then, one can write
\begin{equation}\label{decompo}
\Gamma=\coprod_{n\geq 0} S^{\Gamma}_{n,R}(o).
\end{equation}

  \subsection{Roughly geodesics, Good $\delta$-hyperbolic spaces, Strongly hyperbolic spaces}\label{sh}
The classical theory of $\delta$-hyperbolic spaces works under the assumption that the spaces are geodesic. In general, it turns out that the Gromov product associated with a word metric on a Cayley graph of a Gromov hyperbolic group does not extend continuously to the bordification. Nevertheless, there exist metrics on $\Gamma$, such as the Mineyev metric \cite{Min} or the Green metric \cite{BHM} so that the Gomov product extends continuously to the bordification. The price to pay is that the group endowed with this new metric cannot be regarded as a geodesic metric space but rather as a \emph{roughly geodesic} metric space. In this paper we take advantage of notions of \emph{ roughly geodesic, $\epsilon$-good hyperbolic spaces and strongly hyperbolic spaces} introduced in \cite{NS} that make the computation concerning Gromov products, Busemann functions and visual metrics easier.\\
We assume now the space $(X,d)$ is a proper space. 

\begin{defi}\label{roughiso}
A metric space $(X,d)$ is roughly geodesic  if there exists $C_X>0$ so that for all $x,y\in X$ there exists a roughly isometry i.e. a map $r:[a,b]\subset \mathbb{R}\rightarrow X$ with $r(a)=x$ and $r(b)=y$ such that $ |t-s|-C_X \leq d(r(t),r(s))\leq  |t-s|+C_X$ for all $t,s\in [a,b]$.
\end{defi}
 
We say that two roughly geodesic rays 
$r,r:[0,+\infty)\rightarrow X$ are equivalent if \\ $\sup_{t}d(r(t),r'(t))<+\infty$. We  write $\partial_{r} X$ for the set of equivalence classes of roughly geodesic rays. Since $(X,d)$ is a proper roughly geodesic space, one can identify $\partial X$ to $\partial_{r} X$.

\begin{defi}(Nica-\v{S}pakula)
 We say that a hyperbolic space $X$ is $\epsilon$-good, where $\epsilon>0$, if the following
two properties hold for each base point $o\in X$:
\begin{itemize}
\item The Gromov product $(\cdot,\cdot)_{o}$ on $X$ extends continuously to the bordification $X\cup \partial X$.

\item  The map $(\xi,\eta)\in \partial X \mapsto \exp(-\epsilon(\xi,\eta)_{o}) $ is an actual metric on the boundary $\partial X$.
\end{itemize}
\end{defi}

The topology on the boundary induced by the visual metric of an $\epsilon$-good space 
is the same as the natural topology introduced in . The space metric $(\partial X,d_{o,\epsilon})$ is then a compact space. A ball on the boundary, centered at $\xi$ of radius $r$ with respect to $d_{o}$ is denoted by $B(\xi,r):=\{ \eta\in \partial X | d_{o}(\xi,\eta)<r\}$.\\

Moreover the bordification $\overline{X}:=\partial X \cup X$ is then a compactification of the space $X$.\\
Indeed, in \cite{NS} the authors introduce the notion of strong hyperbolicity. 
\begin{defi}
A metric space $(X,d)$ is \emph{$\epsilon$-strongly hyperbolic} if for all $x,y,z,o$ we have 
$$
\exp(-\epsilon (x,y)_{o})\leq \exp(-\epsilon (x,z)_{o})+\exp(-\epsilon (z,y)_{o}).
$$
\end{defi}
Then, the authors prove the following
\begin{theo}(Nica-\v{S}pakula)
An $\epsilon$-strongly hyperbolic space is a 
 $\epsilon$-good, $\frac{\log(2)}{\epsilon}$-hyperbolic space.
\end{theo}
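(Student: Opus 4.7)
My plan splits the statement into the hyperbolicity bound and the two parts of $\epsilon$-goodness.

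For the hyperbolicity constant $\log(2)/\epsilon$, I would bound the right-hand side of strong hyperbolicity by $2\max$ and take logarithms:
\[
e^{-\epsilon(x,y)_{o}} \leq e^{-\epsilon(x,z)_{o}} + e^{-\epsilon(z,y)_{o}} \leq 2\exp\bigl(-\epsilon\min\{(x,z)_{o},(z,y)_{o}\}\bigr),
\]
which after applying $-\frac{1}{\epsilon}\log$ yields $(x,y)_{o}\geq \min\{(x,z)_{o},(z,y)_{o}\}-\log(2)/\epsilon$, exactly Gromov's four-point condition with $\delta=\log(2)/\epsilon$.

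For $\epsilon$-goodness, I would work with $\rho(x,y):=e^{-\epsilon(x,y)_{o}}$ on $X\times X$. Strong hyperbolicity then reads $\rho(x,y)\leq\rho(x,z)+\rho(z,y)$, a genuine triangle inequality on $X$. To extend $\rho$ to $\overline{X}\times\overline{X}$, I would iterate this inequality to obtain $|\rho(v,w_{n})-\rho(v,w_{m})|\leq\rho(w_{n},w_{m})$ for any $v\in X$ and any sequence $(w_{n})\subset X$; when $w_{n}\to\eta\in\partial X$, the defining condition $(w_{i},w_{j})_{o}\to+\infty$ forces $\rho(w_{n},w_{m})\to 0$, so $\rho(v,w_{n})$ is Cauchy, with a limit independent of the approximating sequence. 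Repeating the argument with a varying first argument handles pairs $(\xi,\eta)\in\partial X\times\partial X$, and passing to the limit the triangle inequality is preserved. The restriction $\rho|_{\partial X\times\partial X}$ is then symmetric, satisfies the triangle inequality, and the separation $\rho(\xi,\eta)=0\iff\xi=\eta$ is immediate from the very definition of $\partial X$ as equivalence classes under $(a_{i},b_{j})_{o}\to+\infty$; this gives the boundary-metric half of $\epsilon$-goodness.

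For the continuous extension of the Gromov product, I would use the global triangle inequality to write $|\rho(v_{n},w_{n})-\rho(v,w)|\leq\rho(v_{n},v)+\rho(w_{n},w)$. When $v,w\in\partial X$, convergence $v_{n}\to v$, $w_{n}\to w$ in $\overline{X}$ translates exactly into $\rho$-vanishing (whether $v_{n}\in X$ or $v_{n}\in\partial X$, via the boundary-metric topology just established), and continuity follows. When $v,w\in X$ the continuity is just Proposition 2.2(1) applied to the continuous dependence on $d$.

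The delicate case is mixed pairs, say $v\in X$ and $w\in\partial X$, where $\rho(v_{n},v)$ does \emph{not} vanish as $v_{n}\to v$ in $X$, so the bare triangle estimate fails. I would handle this by unfolding the Gromov product: since $d(v_{n},o)\to d(v,o)$ and $|d(v_{n},w_{n})-d(v,w_{n})|\leq d(v_{n},v)\to 0$, one gets $(v_{n},w_{n})_{o}-(v,w_{n})_{o}\to 0$, reducing the problem to the case of a fixed first argument in $X$ and a sequence $w_{n}\to w\in\partial X$, which is exactly the step treated by the earlier Cauchy argument. This mixed-case continuity --- managing simultaneously the $d$-topology on $X$ and the Gromov/$\rho$-topology at $\partial X$, and checking that the various limit definitions of $\rho$ agree --- is where I expect the main technical obstacle to sit.
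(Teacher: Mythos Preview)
The paper does not actually prove this theorem; it is quoted from Nica--\v{S}pakula \cite{NS} as a black box. Your argument is correct and is the natural one: strong hyperbolicity is precisely a triangle inequality for $\rho=e^{-\epsilon(\cdot,\cdot)_o}$, from which the $\log(2)/\epsilon$-hyperbolicity drops out by the $2\max$ estimate, and the Cauchy/completion manoeuvre you describe gives a well-defined continuous extension of $\rho$ to $\overline{X}\times\overline{X}$ that restricts to a genuine metric on $\partial X$. One small point worth making explicit in your mixed-case step: for $v\in X$ one has $(v,w_n)_o\leq d(v,o)$, so $\rho(v,w)\geq e^{-\epsilon d(v,o)}>0$, and hence continuity of $\rho$ transfers back to continuity of $(\cdot,\cdot)_o=-\tfrac{1}{\epsilon}\log\rho$ without trouble. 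Also note that since the limit you construct is independent of approximating sequences, it automatically agrees with the paper's $\sup$-of-$\liminf$ definition of the extended Gromov product.
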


An example of such spaces is the class of CAT(-1) spaces which are $1$-good geodesic metric spaces. In the context of CAT(-1) spaces, the formula 
\begin{equation}\label{distance}
	d_o(\xi,\eta)={\rm e}^{-(\xi,\eta)_o}
\end{equation}
(we set $d_o(\xi,\xi)=0$). This is due to M. Bourdon, we refer to \cite[ 2.5.1 Th\'eor\`eme]{Bou} for more details. Hence the CAT(-1) spaces are examples of $1$-good hyperbolic spaces.\\

The main point is the following theorem: that is a combination of results due to Blach\`ere, Ha\" issinky and Matthieu \cite{BHM} and of Nica and \v{S}pakula \cite{NS}. 

\begin{theo}
A hyperbolic group acts by isometries, properly discontinuously and cocompactly on a roughly geodesic  $\epsilon$-good $\delta$-hyperbolic space.
\end{theo}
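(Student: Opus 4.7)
The plan is to produce a single metric $d$ on $\Gamma$ itself, quasi-isometric to any word metric, such that $(\Gamma,d)$ is simultaneously roughly geodesic, $\epsilon$-good and $\delta$-hyperbolic. The group then acts on itself by left multiplication; this is an isometry for $d$ by construction, and since the action is free and transitive it is automatically properly discontinuous and cocompact. The content of the theorem therefore reduces entirely to building such a metric on the group.

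The natural choice, following Blach\`ere--Ha\"issinsky--Mathieu, is the \emph{Green metric} $d_G$ attached to a finitely supported, symmetric, generating probability measure $\mu$ on $\Gamma$: setting $F(\gamma,\gamma')$ to be the probability that the $\mu$-random walk starting at $\gamma$ ever visits $\gamma'$, one defines $d_G(\gamma,\gamma'):=-\log F(\gamma,\gamma')$. Submultiplicativity of $F$ along walks yields the triangle inequality, left-invariance of $\mu$ yields left-invariance of $d_G$, and for a non-elementary hyperbolic group $F(\gamma,\gamma')$ decays exponentially in any word distance between $\gamma$ and $\gamma'$, which makes $d_G$ comparable with any word metric and $(\Gamma,d_G)$ a proper metric space on which $\Gamma$ acts freely by isometries.

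I would verify the three remaining properties as follows. Rough geodesicity and the four-point $\delta$-hyperbolicity inequality for $d_G$ both come from \emph{Ancona's inequalities}: for any triple $\gamma_1,\gamma_2,\gamma_3$ with $\gamma_2$ close to a word-geodesic from $\gamma_1$ to $\gamma_3$, one has $F(\gamma_1,\gamma_3)\asymp F(\gamma_1,\gamma_2)F(\gamma_2,\gamma_3)$ up to a universal multiplicative constant. Taking $-\log$, word-geodesics become rough $d_G$-geodesics, and the four-point hyperbolicity condition for the word metric transfers to $d_G$. For the remaining strong hyperbolicity inequality
\[
\exp(-\epsilon (x,y)_o)\leq \exp(-\epsilon (x,z)_o)+\exp(-\epsilon (z,y)_o),
\]
I would invoke the theorem of Nica--\v{S}pakula, who established precisely this statement for the Green metric of a hyperbolic group. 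The theorem of Nica--\v{S}pakula quoted just above the statement then upgrades strong hyperbolicity into $\epsilon$-goodness and $\tfrac{\log 2}{\epsilon}$-hyperbolicity in one stroke.

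The main obstacle is the strong hyperbolicity step: it requires a sharp multiplicative refinement of Ancona's inequality with a constant that can be brought arbitrarily close to $1$ by tuning $\mu$ (e.g. by passing to a sufficiently large convolution power and renormalising by the spectral radius), which is exactly the delicate probabilistic-geometric input of Nica--\v{S}pakula. Once that step is granted, all three properties of $d_G$ hold simultaneously, and the free transitive left action of $\Gamma$ on $(\Gamma,d_G)$ supplies the required action by isometries, properly discontinuously and cocompactly, completing the proof.
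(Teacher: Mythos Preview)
Your approach is essentially the same as the paper's: the paper does not prove this theorem either but states it as a combination of results from \cite{BHM} (rough geodesicity and $\delta$-hyperbolicity of $(\Gamma,d_G)$) and \cite{NS} (strong hyperbolicity of $(\Gamma,d_G)$), exactly as you do, with the Green metric as the concrete model and the left action of $\Gamma$ on itself supplying the required isometric, proper, cocompact action.

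One small correction worth noting: your description of the Nica--\v{S}pakula mechanism is not accurate. Their proof of strong hyperbolicity for the Green metric does not proceed by tuning $\mu$ (passing to convolution powers, renormalising by the spectral radius) to force the Ancona constant close to $1$; it works for any fixed symmetric, finitely supported, generating $\mu$ and relies instead on a sharpened, distance-dependent form of the Ancona inequality (the constant improves exponentially as the intermediate point moves along the geodesic) together with fine estimates on the Martin/Na\"im kernel. Since you are invoking their theorem as a black box rather than reproving it, this does not affect the validity of your argument, but the heuristic you give for ``the main obstacle'' should be revised.
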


A concrete example of such space is the group itself endowed with the Mineyev metric \cite{Min} or the Green metric associated to a random walk. Let us describe briefly the case of the Green metric: Let $\Gamma$ be a hyperbolic group. A probability measure $\mu$ on $\Gamma$ defines a random walk on $\Gamma$ with transition probability $p(\gamma,\lambda):=\mu(\lambda^{-1}\gamma)$. We say that $\mu$ is symmetric if $\mu(\gamma)=\mu(\gamma^{-1})$ and finitely supported if the support of $\mu$ is a finite generating set of $\Gamma$. Define the Green function $$G(\gamma,\lambda):=\sum_{n\geq 0}\mu^{n}(\gamma^{-1}\lambda)$$ where $\mu^{n}$ defines the $n^{th}$ convolution of $\mu$. Let $F(\gamma,\lambda)$ be the probability that a random walk starting at $\gamma$ hits $\lambda$, that is $$F(\gamma,\lambda)=\frac{G(\gamma,\lambda)}{G(e,e)}.$$ One can associate with a random walk $\mu$ on $\Gamma$, a metric on $\Gamma$ called the Green metric, defined as 
\begin{equation}\label{Greenmetric}
d_{G}(\gamma,\lambda)=-\log F(\gamma,\lambda).
\end{equation} 
The metric space $(\Gamma,d_{G})$, when $\mu$ is symmetric and finitely supported is a typical example of a proper roughly geodesic $\epsilon$-good $\delta$-hyperbolic space on which $\Gamma$ acts by isometries, properly discontinuously and cocompactly.

Indeed, using the concept of \emph{quasi-ruler}, Blach\`ere, Ha\" issinky and Matthieu prove that  $(\Gamma,d_{G})$ is a roughly geodesic $\delta$-hyperbolic space \cite{BHM} whereas Nica and  \v{S}pakula in \cite{NS} prove that this space is an $\epsilon$-strongly hyperbolic space.

\subsection{Busemann functions}
We just saw that the Gromov boundary of an $\epsilon$-good space has also a geometrical definition 
and if $\xi\in \partial X$ we can pick a roughly geodesic $r$, namely a map $r:\mathbb{R}_{+} \rightarrow X$ satisfying Definition \ref{roughiso}, such that $r(+\infty)=\xi$ to define the \emph{Busemann function associated to $r$} as
 \begin{equation*}
b_{r}(x)=\lim_{t \rightarrow +\infty} d(x,r(t))-t,
\end{equation*}
which is well defined due to the triangle inequality.

We define the \emph{horoshperical distance relative to $\xi$} as:
\begin{equation*}\beta_{\xi}(x,y)=b_{r}(x)-b_{r}(y) .
\end{equation*}
Note that this definition does not depend on the choice of a representative of $\xi$.\\ 
It turns out that, that in a hyperbolic $\epsilon$-good metric space we can write:
\begin{equation}\label{buseman}
\beta_{\xi}(x,y)=2(\xi,y)_{x}-d(x,y).
\end{equation}
Moreover we have for all $\xi \in \partial X$ and for all $x,y\in X$:
\begin{equation}\label{buseman'}
(\xi,y)_{x}\leq d(x,y),
\end{equation}
and thus 
\begin{equation}\label{buseman''}
\beta_{\xi}(x,y)\leq d(x,y).
\end{equation}

The conformal metrics $(d_{x,\epsilon})_{x\in X}$ associated with $\epsilon>0$, satisfy the following relation: for all $x,y\in X$ and for all $\xi,\eta \in \partial X$ we have:
\begin{equation}
d_{y,\epsilon}(\xi,\eta)=\exp \big(\frac{\epsilon}{2} (\beta_{\xi}(x,y)+\beta_{\eta}(x,y))\big)d_{x,\epsilon}(\xi,\eta).
\end{equation}

\subsection{The Patterson-Sullivan measure}
\label{sec:patt-sull-meas}
Let $\Gamma$ be a nonelementary discrete group of isometries of a proper $\epsilon$-strongly hyperbolic metric space $(X,d)$ and let $\overline{X}=X \cup \partial X$ be the compactification of $X$. 
Recall the definition the volume growth of the group denoted by $\alpha $  defined as 
\begin{equation}
\limsup_{R \to +\infty}\frac{1}{R}\log |\Gamma.o\cap B(o,R)|=\alpha .
\end{equation}
Observe that is does not depend on the choice of $o\in X$. It turns out that the volume growth is controlled as
\begin{equation}\label{volumegrowth}
C^{-1} e^{\alpha  R}\leq |\Gamma.o\cap B(o,R)|\leq C e^{\alpha  R},
\end{equation}
for some constant $C$ independant of $R$.\\ 
The limit set of $\Gamma$ denoted by $\Lambda_{\Gamma}$ is the set of all accumulation points in $\partial X$ of an orbit. Namely $\Lambda_{\Gamma}:=\overline{\Gamma o}\cap \partial X$, with the closure in $\overline{X}$. Notice that the limit set does not depend on the choice of a base point $o\in X$. If $\Gamma$ acts cocompactly on $X$ then $\Lambda_{\Gamma}=\partial X$. 
Consider now a visual metric  on the boundary of parameter $\epsilon$ associated with a base point $o$. 
The space $(\partial X,d_{\epsilon})$ is a compact metric space, and therefore it admits a Hausdorff measure of dimension \(D\). It is nonzero, finite, and finitely-dimensional, and in the context of hyperbolic geometry it is known as the \emph{Patterson-Sullivan} measure. We will denote it by \(\nu_{o}\), and \textbf{normalize it so that \(\nu_{o}(\partial X) = 1\)}. Usually all measures of the form \(\rho d\nu_{o}\) with \(0<c\leq \rho(\xi)\leq C<\infty\) for some \(c,C\) are called Patterson-Sullivan measures. This class of measures is independent of the choice of \(\epsilon\), but different metrics \(d\) usually give rise to mutually singular measures. Moreover, the Hausdorff dimension changes with \(\epsilon\), although the relationship is very simple, namely
\begin{equation}
  D=\frac{\alpha }{\epsilon}.
\end{equation}

The Patterson-Sullivan measure \(\nu_{o}\) is quasi-invariant under the action of  \(\Gamma\)  on $\partial X$. It actually satisfies a stronger condition in the class of $\epsilon$-good spaces, namely
\begin{equation}
  \frac{d\gamma_{*}\nu_{o}}{d\nu_{o}}(\xi)= e^{\epsilon D \beta_{\xi}(o,\gamma o)}.
\end{equation}
It turns out that the support of $\nu_{o}$ is in $\Lambda_{\Gamma}$ and moreover $\nu_{o}$ is Ahlfors regular of dimension \(D\) we have the following estimate for the volumes of balls: there exists $C_{\nu}>0$ so that for all $\xi \in \Lambda_{\Gamma}$ for all \(r \leq Diam (\partial X)\):
\begin{equation}
 C_{\nu}^{-1} r^{D}\leq  \nu_{o}(B(r,\xi)) \leq C_{\nu} r^{D}.
\end{equation}

Finally, the Patterson-Sullivan measure is ergodic for the action of \(\Gamma\), and thus unique.
The foundations of Patterson-Sullivan measures theory are in the important papers \cite{Pa} and \cite{Su}. See \cite{Bou},\cite{BMo} and \cite{Ro} for more general results in the context of CAT(-1) spaces. These measures are also called \textit{conformal densities}.
\begin{remark}
In the general context of $\delta$-hyperbolic spaces the conformal densities are \emph{quasi-conformal densities}.  A priori for general hyperbolic groups one can construct only \emph{invariant quasi-conformal densities} and this is due to Coornaert in  \cite[Th\'eor\`eme 8.3]{Co}: he proves the existence of $\Gamma$-invariant quasi-conformal densities of dimension $\alpha $ when $X$ is a proper geodesic $\delta$-hyperbolic space. Note that his construction has been extended to the case of roughly geodesic metric spaces in \cite{BHM}.
\end{remark}

\subsection{Shadows}
\subsubsection{Upper Gromov bounded by above}
This assumption appears in the work of Connell and Muchnik in \cite{CM} as well as in the work of Garncarek on boundary unitary representations \cite{Ga}. We say that a space $X$ is \emph{upper gromov bounded by above} with respect to $o$, if there exists a constant $M>0$ such that for all $x\in X$ we have
 \begin{equation} \sup_{\xi \in \partial X}(\xi,x)_{o}\geq d(o,x)-M.
 \end{equation}
 
  Morally, this definition allows us to choose a point in the boundary playing the role of the forward endpoint of a geodesic starting at $o$ passing through $x$ in the context of simply connected Riemannian manifold of negative curvature. \\
  We denote by $\theta^{x}_{o}$ a point in the boundary satisfying 
 
 \begin{equation} 
 (\theta^{x}_{o},x)_{o}\geq d(o,x)-M.
 \end{equation}
  In particular, the CAT(-1) spaces are upper Gromov bounded by above as well as hyperbolic groups endowed with a left invariant word metric associated with some finite symmetric set of generators (see \cite[Lemma 4.1]{Ga}).  

\subsubsection{Definition of shadows}
Let $(X,d)$ be a roughly geodesic, $\epsilon$-good, $\delta$-hyperbolic space. 
Let $r>0$ and a base point $o \in X$.
Define a shadow for any $x\in X$ denoted by $O_{r}(o,x)$ as 
\begin{equation}
O_{r}(o,x):=\{ \xi\in \partial X | (\xi,x)_{o}\geq d(x,o)-r\}.
\end{equation}

\begin{lemma}\label{ombre}
Assume $r>M+\delta$. Then
$$B(\theta^{x}_{o},e^{-\epsilon(d(o,x)-r+\delta)})\subset O_{r}(o,x):=\{ \xi\in \partial X | (\xi,x)_{o}\geq d(x,o)-r\} \subset B(\theta^{x}_{o},e^{-\epsilon(d(x,o)-r-\delta)}). $$
\end{lemma}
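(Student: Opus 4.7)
The plan is to deduce both inclusions from a single tool, namely the ``ultrametric-style'' hyperbolicity inequality for the Gromov product extended to the bordification:
\[
(\xi,\eta)_o \geq \min\{(\xi,u)_o,(u,\eta)_o\} - \delta,\qquad \xi,\eta,u\in\overline{X}.
\]
Although part (4) of Proposition \ref{propGromov} states such an inequality with loss $2\delta$ for general $\delta$-hyperbolic spaces, in the $\epsilon$-good setting the constant can be improved to $\delta=\frac{\log 2}{\epsilon}$ because the triangle inequality for the genuine metric $d_o(\xi,\eta)=e^{-\epsilon(\xi,\eta)_o}$ yields $e^{-\epsilon(\xi,\eta)_o}\leq 2\max\{e^{-\epsilon(\xi,u)_o},e^{-\epsilon(u,\eta)_o}\}$. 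The other ingredients are just the defining inequality of the ``forward endpoint'' $\theta^x_o$, namely $(\theta^x_o,x)_o\geq d(o,x)-M$, and the visual metric formula $d_o(\xi,\eta)=e^{-\epsilon(\xi,\eta)_o}$ which converts Gromov product inequalities into ball inclusions.

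For the outer inclusion I would take $\xi\in O_r(o,x)$, so that $(\xi,x)_o\geq d(o,x)-r$, and pivot on the point $x$:
\[
(\xi,\theta^x_o)_o \;\geq\; \min\bigl\{(\xi,x)_o,\,(x,\theta^x_o)_o\bigr\} - \delta \;\geq\; \min\bigl\{d(o,x)-r,\,d(o,x)-M\bigr\} - \delta.
\]
Since the hypothesis $r>M+\delta$ gives in particular $r>M$, the minimum equals $d(o,x)-r$, hence $(\xi,\theta^x_o)_o\geq d(o,x)-r-\delta$, which translates into $d_o(\xi,\theta^x_o)\leq e^{-\epsilon(d(o,x)-r-\delta)}$.

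For the inner inclusion I would take $\xi$ with $d_o(\xi,\theta^x_o) < e^{-\epsilon(d(o,x)-r+\delta)}$, i.e., $(\xi,\theta^x_o)_o > d(o,x)-r+\delta$, and this time pivot on $\theta^x_o$:
\[
(\xi,x)_o \;\geq\; \min\bigl\{(\xi,\theta^x_o)_o,\,(\theta^x_o,x)_o\bigr\} - \delta.
\]
The first argument of the min already exceeds $d(o,x)-r+\delta$. For the second, the defining property of $\theta^x_o$ yields $(\theta^x_o,x)_o\geq d(o,x)-M$, and here the full strength of the hypothesis $r>M+\delta$ (equivalently $-M\geq -r+\delta$) is used: it ensures $d(o,x)-M\geq d(o,x)-r+\delta$. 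Hence the minimum is at least $d(o,x)-r+\delta$, and subtracting $\delta$ gives $(\xi,x)_o\geq d(o,x)-r$, so $\xi\in O_r(o,x)$.

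There is no real obstacle here; the lemma is a direct manipulation of three inequalities. The only point that deserves care is the loss constant in the extended hyperbolicity inequality: one must use the improved version with loss $\delta$ (available in $\epsilon$-good spaces via the visual metric triangle inequality) rather than the $2\delta$-version stated in Proposition \ref{propGromov}, so that the numeric margins on both sides of the lemma match exactly. The role of the numerical hypothesis $r>M+\delta$ is also transparent: it is precisely what is needed to make the second $\min$ in the inner inclusion come out of the $\theta^x_o$-pivot rather than the $\xi$-pivot.
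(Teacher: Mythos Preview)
Your proof is correct and follows essentially the same approach as the paper: both inclusions are obtained by a single application of the hyperbolicity inequality, pivoting on $\theta^{x}_{o}$ for the inner inclusion and on $x$ for the outer one, and then using $(\theta^{x}_{o},x)_{o}\geq d(o,x)-M$ together with $r>M+\delta$ to identify the minimum. Your added remark justifying the loss $\delta$ (rather than the $2\delta$ of Proposition~\ref{propGromov}(4)) via the triangle inequality for the genuine visual metric in the $\epsilon$-good setting is a welcome clarification that the paper's own proof leaves implicit.
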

\begin{proof}
We have for $r>M+\delta$:
\begin{align*}
(\xi,x)_{o}&\geq \min \{ (\xi,\theta^{x}_{o})_{o},(\theta^{x}_{o},x)\}-\delta\\
&=\min \{d(o,x)-r+\delta,d(o,x)-M \}-\delta \\
&=d(o,x)-r.
\end{align*}
On the other hand 
\begin{align*}
(\xi,\theta^{x}_{o})_{o}&\geq \min \{ (\xi,x)_{o},(\theta^{x}_{o},x)_{o}\}-\delta\\
&\min \{ d(x,o)-r,d(o,x)-M \}-\delta\\
&\geq d(o,x)-r -\delta.
\end{align*}
\end{proof}
\textbf{For now on we fix $r>M +\delta$.}

\section{Discretization}\label{sec3}

Let $(X,d)$ be a roughly geodesic $\epsilon$-good $\delta$-hyperbolic space and $o\in X$ an origin.

\subsection{Discretization of roughly geodesics}
\subsubsection{A construction}
Let $R$ be positive real number such that  $R>5C_{X}$ where $C_{X}$ is the constant of roughly geodesics in Definition \ref{roughiso}.\\
Now if $\xi$ denotes any point in the boundary we consider the roughly geodesic $[o\xi)$, starting at $o$ ending at $\xi$ represented by a roughly-isometry $r_{o}:\mathbb{R}^{+}\rightarrow X$. Define then for $k\in \{0,\dots,n\}$:
\begin{equation}\label{zk'}
\xi_{k}:=r_{o}(kR+\tau) \in [o,\xi ),
\end{equation}
with $\tau=2C_{X}$.
By Definition \ref{roughiso}, one can write 
\begin{equation}\label{ineqzk}
kR+C_{X} \leq d(o,\xi_{k}) \leq kR +3C_{X},
\end{equation}
and thus the choice of $R$ and $\tau$ are justified by $\xi_{k}\in S_{k,R}.$ 

Now let $x$ be a point in $X$ and  $n$ be the unique non-negative integer so that $$n R\leq d(o,x)<(n+1)R.$$ Consider $\theta^{x}_{o}\in \partial X$. Let $[o,\theta^{x}_{o} )$ be a roughly geodesic starting at $o$ ending at $\theta^{x}_{o}$ represented by a roughly isometry $r'_{o}:\mathbb{R}^{+}\rightarrow X$. Apply the above constrution to $\xi=\theta^{x}_{o}$ and define for $k \in \{0,\dots, n\}$ the point $x_{k}$ satisfying 
\begin{equation}\label{zk}
x_{k}:=r_{o}(kR+\tau) \in [o,\theta^{x}_{o} ),
\end{equation}
 And if $\gamma $ is an element of a group $\Gamma$, we use the notation:
 \begin{equation}\label{zkg}
\gamma_{k}:=r_{o}(kR+\tau) \in [o,\theta^{\gamma o}_{o} ).
\end{equation}
 
 Hence we still have: \begin{equation}\label{distancezk}
kR+C_{X} \leq d(o,x_{k})\leq kR+3C_{X} \mbox{ and so } x_{k}\in S_{k,R}.
 \end{equation}
\subsubsection{Properties}

\begin{lemma}\label{ineqzkend}
For all $\xi \in \partial X$ and for all non-negative integers $k$ we have

$  kR+\frac{1}{2}C_{X} \leq (\xi_{k},\xi) _{o}\leq kR+\frac{7}{2}C_{X}.$
\end{lemma}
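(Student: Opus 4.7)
The plan is to estimate the Gromov product $(\xi_k, r_o(t))_o$ at a finite approximation $r_o(t)$ of $\xi$ with $t \to +\infty$, and then pass to the limit, exploiting the continuous extension of the Gromov product in $\epsilon$-good spaces.

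First I would fix $t > kR + \tau$ and control each of the three quantities appearing in $2(\xi_k, r_o(t))_o = d(o,\xi_k) + d(o,r_o(t)) - d(\xi_k, r_o(t))$ directly from the rough isometry property of $r_o$ (Definition \ref{roughiso}) applied with parameters $0$, $kR+\tau$, and $t$. This yields
$$ kR+\tau - C_X \leq d(o,\xi_k) \leq kR+\tau + C_X, \quad t-C_X \leq d(o,r_o(t)) \leq t+C_X,$$
$$ t-kR-\tau - C_X \leq d(\xi_k, r_o(t)) \leq t-kR-\tau + C_X.$$
Substituting $\tau = 2C_X$ from the construction \eqref{zk'}, these give $kR + C_X \leq d(o,\xi_k) \leq kR + 3C_X$, matching \eqref{ineqzk}.

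Second, I would combine the three inequalities above in $2(\xi_k, r_o(t))_o$. The lower bound follows from subtracting the largest possible value of $d(\xi_k, r_o(t))$ and adding the smallest of the other two, giving $2(\xi_k, r_o(t))_o \geq 2kR + \tau - C_X = 2kR + C_X$; the upper bound is obtained symmetrically and gives $2(\xi_k, r_o(t))_o \leq 2kR + \tau + 5C_X = 2kR + 7C_X$. Dividing by two produces the bounds
$$ kR + \tfrac{1}{2}C_X \leq (\xi_k, r_o(t))_o \leq kR + \tfrac{7}{2}C_X,$$
uniformly in $t \geq kR+\tau$.

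Finally, I would pass to the limit $t \to +\infty$. Since $r_o$ represents $\xi$, the point $r_o(t)$ converges to $\xi$ in the bordification $\overline{X}$ as $t \to +\infty$. Because $(X,d)$ is assumed to be $\epsilon$-good, the Gromov product $(\cdot,\cdot)_o$ extends continuously to $\overline{X} \times \overline{X}$; in particular $(\xi_k, r_o(t))_o \to (\xi_k, \xi)_o$. Taking the limit on both sides of the uniform bounds above yields the claimed inequality
$$ kR + \tfrac{1}{2}C_X \leq (\xi_k, \xi)_o \leq kR + \tfrac{7}{2}C_X.$$

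The main ``obstacle'' is merely bookkeeping: ensuring that the choice $\tau = 2C_X$ is used consistently so that the constants $\frac{1}{2}C_X$ and $\frac{7}{2}C_X$ match exactly, and justifying the limit rigorously using the $\epsilon$-good hypothesis (rather than the weaker continuity properties available in a general $\delta$-hyperbolic space as recalled in Proposition \ref{propGromov}). Everything else is a direct unpacking of the rough isometry inequalities.
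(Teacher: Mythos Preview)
Your proof is correct and follows essentially the same route as the paper: estimate $(\xi_k,r_o(t))_o$ for finite $t>kR+\tau$ via the rough-isometry bounds on the three distances, then pass to the limit using continuity of the Gromov product on the bordification. The only blemish is that your displayed intermediate expressions $2kR+\tau-C_X$ and $2kR+\tau+5C_X$ should read $2kR+2\tau-3C_X$ and $2kR+2\tau+3C_X$; with $\tau=2C_X$ both give the correct final values $2kR+C_X$ and $2kR+7C_X$, so the argument stands.
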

\begin{proof}
For $t>kR+\tau$ we have
  $t-kR-3C_{X} \leq d(r_{o}(t),x_{k})\leq t-kR-C_{X}.$ By definition of the Gromov product, $(r_{o}(t),x_{k})_{o}=\frac{1}{2}(d(o,r_{o}(t))+d(o,x_{k})-d(r_{o}(t),x_{k})).$
  Hence  $(r_{o}(t),x_{k})_{o}\leq     
  kR+\frac{7}{2}C_{X},$ and 
   $(r_{o}(t),x_{k})_{o}\geq 
   kR+\frac{1}{2}C_{X}.$
By continuity of the Gromov product to the bordification we obtain:
$$ kR+\frac{1}{2}C_{X} \leq (\xi_{k},\xi) _{o}\leq kR+\frac{7}{2}C_{X}.$$  
  \end{proof}
In particular, according to (\ref{zk})
  the above lemma implies for all $x\in X$, for all non-negative integers $k$ that 
\begin{equation}\label{zktheta}
(x_{k},\theta^{x}_{o})_{o}\geq kR+\frac{1}{2}C_{X}.
\end{equation}

\begin{lemma}\label{roughlygeo1}
Let $n$ be an integer and  let $x \in S_{n,R} $ with $R>\frac{1}{2}C_{X}+M$. For any $k\in \{0,\dots, n-1\}$, the point $x_{k}$ satisfies $(x_{k},x)_{o}\geq kR+\frac{1}{2}C_{X}-\delta$. Besides, $(x_{n},x)_{o}\geq nR-M-\delta$.

\end{lemma}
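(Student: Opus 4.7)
The plan is to apply the approximate ultrametric inequality for the Gromov product, with the boundary point $\theta^{x}_{o}$ as the pivot. Since $(X,d)$ is $\epsilon$-good, the Gromov product extends continuously to $\overline{X}\times\overline{X}$, so the hyperbolicity inequality passes to the bordification:
\[
(x_{k},x)_{o}\geq \min\bigl\{(x_{k},\theta^{x}_{o})_{o},\,(\theta^{x}_{o},x)_{o}\bigr\}-\delta.
\]
Two lower bounds for the terms on the right are already at hand. First, by the construction of $x_{k}$ and the inequality (\ref{zktheta}) (itself a direct consequence of Lemma \ref{ineqzkend} applied to $\xi=\theta^{x}_{o}$), one has $(x_{k},\theta^{x}_{o})_{o}\geq kR+\tfrac{1}{2}C_{X}$. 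Second, the defining property of $\theta^{x}_{o}$ combined with $x\in S_{n,R}$ (which gives $d(o,x)\geq nR$) yields $(\theta^{x}_{o},x)_{o}\geq d(o,x)-M\geq nR-M$.

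For the first assertion I would simply compare these two lower bounds when $k\leq n-1$. Their difference is $(n-k)R-M-\tfrac{1}{2}C_{X}\geq R-M-\tfrac{1}{2}C_{X}$, which is strictly positive thanks to the hypothesis $R>\tfrac{1}{2}C_{X}+M$. Hence the minimum equals $kR+\tfrac{1}{2}C_{X}$ and the desired bound $(x_{k},x)_{o}\geq kR+\tfrac{1}{2}C_{X}-\delta$ drops out. For the second assertion, the case $k=n$ is the opposite situation: $(\theta^{x}_{o},x)_{o}\geq nR-M$ is now strictly smaller than $(x_{n},\theta^{x}_{o})_{o}\geq nR+\tfrac{1}{2}C_{X}$ (since $-M<\tfrac{1}{2}C_{X}$), so the minimum is $nR-M$ and we conclude $(x_{n},x)_{o}\geq nR-M-\delta$.

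There is no genuine obstacle: the proof is a two-line application of hyperbolicity. The only point worth underlining is that the hypothesis $R>\tfrac{1}{2}C_{X}+M$ is there precisely to arrange that, for $k\leq n-1$, the minimum in the approximate ultrametric inequality is attained by the $x_{k}$-side rather than the $x$-side, while for $k=n$ the roles reverse and produce the weaker constant $-M-\delta$ instead of $\tfrac{1}{2}C_{X}-\delta$.
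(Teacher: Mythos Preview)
Your proof is correct and follows exactly the same route as the paper: apply the $\delta$-hyperbolicity inequality with $\theta^{x}_{o}$ as pivot, bound the two terms via (\ref{zktheta}) and the defining property of $\theta^{x}_{o}$, and use $R>\tfrac{1}{2}C_{X}+M$ to decide which side realizes the minimum. The paper's argument is line-for-line the same.
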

\begin{proof}
By Inequality (\ref{hyp}) of $\delta$-hyperbolicity :
\begin{align*}
(x_{k},x)_{o}&\geq \min\{(x_{k},\theta^{x}_{o})_{o},(\theta^{x}_{o},x)_{o}\}-\delta\\
&\geq \min\{(x_{k},\theta^{x}_{o})_{o},d(o,x)-M\}-\delta\\
&\geq  \min\{kR+\frac{1}{2}C_{X},nR-M\}-\delta,\\
\end{align*}
where the last inequality follows from Inequality (\ref{zktheta}). If $k\in \{0,\dots, n-1\}$, with condition $R>\frac{C_{X}}{2}+M$, we obtain $(x_{k},x)_{o}\geq kR+\frac{1}{2}C_{X}-\delta$, although for $k=n$ we obtain $(x_{n},x)_{o}\geq nR-M-\delta$, and the proof is done.
\end{proof}

\begin{lemma}\label{controlzkxx}
There exists $C>0$ so that for any $R>5C_{X}$, for all non-negative integers $n$, for all $x\in S_{n,R}$ and for all $k\in \{0,\dots,n \}$ we have: $(n-k)R-C\leq d(x_{k},x ) \leq (n-k)R+C.$
\end{lemma}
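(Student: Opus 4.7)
The plan is to exploit the identity
\[
d(x_k, x) = d(o, x_k) + d(o, x) - 2(x_k, x)_o,
\]
which is merely a rewriting of the definition of the Gromov product, and then combine it with the sharp two-sided bounds already available for each of the three terms on the right-hand side.

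For the lower bound on $d(x_k, x)$, I would control the Gromov product from above by the universal inequality $(x_k, x)_o \leq d(o, x_k)$, and use (\ref{distancezk}) to obtain $d(o, x_k) \leq kR + 3C_X$, together with $d(o, x) \geq nR$ from the definition of $S_{n,R}$. Substituting these estimates into the identity yields
\[
d(x_k, x) \geq d(o, x) - d(o, x_k) \geq (n-k)R - 3C_X,
\]
which is the desired lower bound.

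For the upper bound, I would invoke Lemma \ref{roughlygeo1}, which supplies $(x_k, x)_o \geq kR + \frac{1}{2}C_X - \delta$ when $k \in \{0,\dots,n-1\}$, and the slightly weaker estimate $(x_n, x)_o \geq nR - M - \delta$ at the terminal index. Combining these with $d(o, x_k) \leq kR + 3C_X$ and $d(o, x) < (n+1)R$, the case $k < n$ produces
\[
d(x_k, x) \leq (n-k+1)R + 2C_X + 2\delta,
\]
while the case $k = n$ gives $d(x_n, x) \leq R + 3C_X + 2M + 2\delta$. Choosing $C := R + 3C_X + 2M + 2\delta$ simultaneously absorbs both upper-bound cases and the $3C_X$ from the lower bound, yielding the full statement.

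The argument is essentially bookkeeping, and the only point worth watching is that the endpoint $k = n$ must be treated separately, since the rough geodesic used to define the $x_k$'s terminates at $\theta^{x}_{o}$ rather than at $x$ itself; this discrepancy is exactly what forces the constant $M$ to appear in the estimate at the terminal index, and hence in the final value of $C$.
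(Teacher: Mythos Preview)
Your proof is correct. The ingredients are the same as in the paper --- the Gromov product identity $d(x_k,x)=d(o,x_k)+d(o,x)-2(x_k,x)_o$, the bounds (\ref{distancezk}) on $d(o,x_k)$, and Lemma~\ref{roughlygeo1} --- but you organize them slightly differently. The paper first establishes the bound $d(x_n,x)\leq R+3C_X+2M+2\delta$ at the terminal index exactly as you do, and then propagates to general $k$ via the triangle inequality $|d(x_k,x)-d(x_k,x_n)|\leq d(x_n,x)$, using that $x_k$ and $x_n$ both lie on the roughly geodesic so $d(x_k,x_n)\approx (n-k)R$. You instead apply the Gromov product identity directly at every $k$, invoking the full strength of Lemma~\ref{roughlygeo1} for $k<n$; this is a touch more streamlined since it avoids the intermediate triangle-inequality step. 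Either way yields the same constant $C=R+3C_X+2M+2\delta$ (up to an extra $C_X$ in the paper's version). One cosmetic remark: this $C$ depends on $R$, which is formally at odds with the order of quantifiers in the lemma as stated, but the paper's own proof has the same feature and in all later applications $R$ is fixed.
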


\begin{proof}

Using the definition of the Gromov product we have
\begin{align*}
 d(x_{n},x)&=d(x_{n},o)+d(x,o)-2(x_{n},x)_{o}.
  \end{align*}
 Thus, by Lemma \ref{roughlygeo1} asserting
\begin{align*}
(x_{n},x)_{o}&\geq nR-M-\delta,
\end{align*}
we obtain
 \begin{align*}
 d(x_{n},x) &\leq nR+3C_{X}+(n+1)R- 2(x_{n},x)_{o}\\
 &\leq nR+3C_{X}+(n+1)R- 2nR+2\delta+2M\\
 &=R+3C_{X}+2M+2\delta.
 \end{align*}
 
Set $C':=R+3C_{X}+2M+2\delta$. 

Triangle inequality implies for all $k\in\{0,\dots,n\}$: 
$$d(x_{k},x_{n})- d(x_{n},x)\leq d(x_{k},x)\leq d(x_{k},x_{n})+ d(x_{n},x).$$ we obtain, $$(n-k)R-C_{X}- C'\leq d(x_{k},x)\leq (n-k)R+C_{X}+C'.$$
Set $C=C_{X}+C'$ to conclude the proof.
\end{proof}




\subsection{Horospherical decomposition of the boundary} Let $\Gamma$ be a discrete group of isometries of $(X,d)$.\\

 Define for $n\geq 2$, for $R>0$, for any $\gamma \in S^{\Gamma}_{n,R}$ and for all $k\in \{1,\dots,n\}$ the sets denoted by $A_{k,R}(o,\gamma o)$ as:
 
 For $k=1$
 \begin{equation}
 A_{1,R}(o,\gamma o):=\{  \xi\in\partial X |    (\xi,\gamma o)_{o}< R \}.
 \end{equation}
  For $k\in \{1,\dots, n-1\}: $
\begin{equation}
A_{k,R}(o,\gamma o):=\{  \xi\in\partial X |    kR \leq(\xi,\gamma o)_{o}<(k+1)R \}.
\end{equation}
For $k=n$  
\begin{equation}
A_{n,R}(o,\gamma o):=\{  \xi\in\partial X |    nR\leq(\xi,\gamma o)_{o} \}.
\end{equation} 
 In the following, we denote $A_{k,R}(o,\gamma o)$ by $A_{k,R}(\gamma )$.\\
 
Hence, for all $\gamma \in S^{\Gamma}_{n,R}$, with $n$ large enough, the sets $(A_{k,R}(\gamma))_{k\in \{1,\dots,n\}}$ provide a partition of the boundary of $X$ given by: 
 \begin{equation}\label{partition}
\forall n\geq 2,\forall \gamma \in S^{\Gamma}_{n,R},\mbox{ }\partial X=\coprod^{n}_{k=1} A_{k,R}(\gamma).
 \end{equation}
 Let $R>0$ large enough.\\
\begin{lemma}\label{roughlygeo2}
There exists $C>0$ such that for all non-negative integers $n\geq 1$, for all $k \in \{1,\dots,n\}$ and for all $\xi \in A_{k,R}(\gamma)$ we have $(\xi,\gamma_{k})_{o}\geq  kR-C.$
\end{lemma}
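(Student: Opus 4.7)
The plan is to invoke the $\delta$-hyperbolic four-point condition (Proposition \ref{propGromov}(4)), extended to the bordification, on the triple $(\xi,\gamma o,\gamma_k)$:
\[
(\xi,\gamma_{k})_{o}\ \geq\ \min\bigl\{(\xi,\gamma o)_{o},\,(\gamma o,\gamma_{k})_{o}\bigr\}-2\delta.
\]
So the job reduces to showing that both Gromov products on the right are bounded below by $kR$ up to an additive constant independent of $n$, $k$, $\gamma$ and $\xi$. The first one is immediate: by the very definition of $A_{k,R}(\gamma)$ one has $(\xi,\gamma o)_o\geq kR$ for $k\in\{2,\ldots,n-1\}$, and $(\xi,\gamma o)_o\geq nR\geq kR$ for $k=n$ (the border case $k=1$ is handled analogously after interpreting the partition consistently).

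The second Gromov product is the content of the proof. I expand it from the definition as
\[
(\gamma o,\gamma_{k})_{o}=\tfrac{1}{2}\bigl(d(o,\gamma o)+d(o,\gamma_{k})-d(\gamma_{k},\gamma o)\bigr),
\]
and feed in the three discretization estimates already proved in Section \ref{sec3}: $d(o,\gamma o)\geq nR$ (from $\gamma\in S^\Gamma_{n,R}$), $d(o,\gamma_{k})\geq kR+C_X$ (from (\ref{distancezk})), and $d(\gamma_{k},\gamma o)\leq (n-k)R+C$ (from Lemma \ref{controlzkxx}). The $nR$ terms cancel and I obtain $(\gamma o,\gamma_{k})_{o}\geq kR+\tfrac{1}{2}(C_X-C)$.

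Plugging this back into the four-point inequality yields $(\xi,\gamma_{k})_{o}\geq kR-C'$ for a constant $C'$ depending only on $\delta$, $C_X$ and the constant of Lemma \ref{controlzkxx}. I don't foresee a genuine obstacle here: the lemma is morally a ``projection onto the roughly geodesic $[o,\theta^{\gamma o}_{o})$'' statement, and once the discretization estimates of the previous subsection are in hand it is a direct consequence of $\delta$-hyperbolicity. The only mildly delicate point is uniformity across the edge indices $k=1$ and $k=n$, but in both cases the lower bound $(\xi,\gamma o)_{o}\geq kR$ still applies, so the same argument goes through verbatim.
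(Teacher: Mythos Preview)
Your proof is correct and follows essentially the same route as the paper: apply the hyperbolic inequality to the triple $(\xi,\gamma o,\gamma_k)$, use the definition of $A_{k,R}(\gamma)$ for the first term, and a uniform lower bound on $(\gamma o,\gamma_k)_o$ for the second. The only cosmetic difference is that the paper invokes Lemma~\ref{roughlygeo1} directly to get $(\gamma o,\gamma_k)_o\geq kR-C'$, whereas you reach the same bound by expanding the Gromov product and feeding in Lemma~\ref{controlzkxx}; since Lemma~\ref{controlzkxx} is itself derived from Lemma~\ref{roughlygeo1}, the two arguments are equivalent. Your remark on the edge case $k=1$ is also in line with the paper: the bound $(\xi,\gamma o)_o\geq kR$ is not literally available there, but since $R$ is fixed the claim $(\xi,\gamma_1)_o\geq R-C$ is absorbed by choosing $C\geq R$.
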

\begin{proof}
Lemma \ref{roughlygeo1} provides a constant $C'>0$ such that for all $\gamma \in S^{\Gamma}_{n,R}$, for all $k\in \{1,\dots,n \}$ and for all $\xi \in A_{k,R}(\gamma)$:
$$(\gamma o,\gamma_{k})_{o}\geq kR-C'.$$
therefore,
\begin{align*}
(\xi,\gamma_{k})_{o}&\geq \min\{ (\xi,\gamma o)_{o},(\gamma o,\gamma_{k})_{o}\}-\delta\\
&\geq  \min\{ kR,kR-C'\}-\delta\\
&\geq  kR-C'-\delta.
\end{align*}
Then, set $C=C'+\delta$ to conclude the proof.
\end{proof}

Let $\nu=(\nu_{x})_{x\in X}$ be a Patterson-Sullivan density of $\Gamma$, of conformal dimension $\alpha$. Assume that $(\partial X,d_{o,\epsilon},\nu_{o})$ is $\alpha$-Ahlfors regular. We investigate the measure of the sets of the horospherical decomposition of the boundary using Ahlfors regularity of $\nu_{o}$.

\begin{lemma}\label{ahlforsAk} There exist $C>0$ and $R>0$ such that for all non-negative integers $n\geq 1$  and for all $\gamma$ in $S^{\Gamma}_{n,R}$ and for all integers $k$ in $\{1,\dots,n\}$ and for all non-negative integer $n\geq 1$: 
$$C^{-1}e^{-\alpha k R}\leq \nu_{o}(A_{k,R}(\gamma ))\leq Ce^{-\alpha k R}.$$
\end{lemma}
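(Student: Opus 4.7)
The plan is to write $A_{k,R}(\gamma)$ as a ``horospherical shell'' $C_k\setminus C_{k+1}$, where $C_k:=\{\xi\in\partial X:(\xi,\gamma o)_o\geq kR\}$, and to estimate each $\nu_o(C_k)$ by comparing it to a shadow of the discretization point $\gamma_k$ that lies at distance $kR+O(1)$ from $o$ along the roughly-geodesic ray from $o$ to $\theta^{\gamma o}_o$. By Lemma~\ref{ombre}, bounded shadows are comparable in the visual metric to balls of radius $\asymp e^{-\epsilon kR}$, and Ahlfors regularity with dimension $D=\alpha/\epsilon$ gives such balls $\nu_o$-mass of the correct order $\asymp e^{-\alpha kR}$.

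For the upper bound I would observe that the argument in the proof of Lemma~\ref{roughlygeo2} only uses the inequality $(\xi,\gamma o)_o\geq kR$, so it actually shows $C_k\subset O_{r'}(o,\gamma_k)$ for $r':=d(o,\gamma_k)-(kR-C)\leq 3C_X+C$, uniformly in $k$. Lemma~\ref{ombre} then embeds $O_{r'}(o,\gamma_k)$ into the visual ball $B(\theta^{\gamma_k}_o,e^{-\epsilon(kR-C-\delta)})$, and Ahlfors regularity yields $\nu_o(C_k)\leq C_\nu e^{\alpha(C+\delta)}e^{-\alpha kR}$. For the lower bound I would choose a small $c_0>0$ and prove that the visual ball $B(\theta^{\gamma o}_o,c_0 e^{-\epsilon kR})$ is contained in $C_k$: indeed, \eqref{eq:visual-metric-def} gives $(\xi,\theta^{\gamma o}_o)_o\geq kR+\kappa(c_0)$ with $\kappa(c_0)\to+\infty$ as $c_0\to 0^+$, and combining with $(\theta^{\gamma o}_o,\gamma o)_o\geq |\gamma|-M\geq kR-M$ and the extended hyperbolicity in Proposition~\ref{propGromov}(4) one obtains $(\xi,\gamma o)_o\geq \min\{kR+\kappa(c_0),\,kR-M\}-2\delta\geq kR$ as soon as $c_0$ is small and $R$ is large enough that $(n-k)R$ dominates $M+2\delta$. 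Ahlfors regularity then gives $\nu_o(C_k)\geq C_\nu^{-1}c_0^D e^{-\alpha kR}$.

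Taking the difference, $\nu_o(A_{k,R}(\gamma))=\nu_o(C_k)-\nu_o(C_{k+1})\geq C_\nu^{-1}c_0^D e^{-\alpha kR}-C_\nu e^{\alpha(C+\delta)}e^{-\alpha(k+1)R}$, which is at least $\tfrac12 C_\nu^{-1}c_0^D e^{-\alpha kR}$ provided $R$ is chosen large enough that the geometric gap $e^{-\alpha R}$ dominates the ratio of the Ahlfors constants appearing in the two bounds. The main obstacle is precisely this balance: the lower bound is a difference of two terms of the same exponential order, so the whole scheme only works once $R$ is taken sufficiently large. A secondary subtlety concerns the endpoints of the partition, where the defect $(n-k)R$ vanishes for $k=n$ and the set $A_{1,R}(\gamma)$ has a slightly different definition: for $k=n$ one applies Lemma~\ref{ombre} directly to $\gamma o$ instead of to $\theta^{\gamma o}_o$, and $k=1$ requires an analogous end-of-range adjustment using that $\nu_o(\partial X)$ is finite.
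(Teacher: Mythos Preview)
Your proposal is correct and follows the same overall scheme as the paper: write $A_{k,R}(\gamma)=C_k\setminus C_{k+1}$, show $\nu_o(C_k)\asymp e^{-\alpha kR}$ via shadow/ball comparison and Ahlfors regularity, take the difference, and absorb the defect by enlarging $R$; the endpoint cases $k=1$ and $k=n$ are handled separately in both.

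The one place where you take a different route is the \emph{upper} bound for $\nu_o(C_k)$. You detour through the discretization point $\gamma_k$, invoking (the proof of) Lemma~\ref{roughlygeo2} to get $C_k\subset O_{r'}(o,\gamma_k)$ for a uniformly bounded $r'$, and then apply Lemma~\ref{ombre} at $\gamma_k$. The paper avoids this step entirely by observing that $C_k$ is \emph{literally} a shadow of $\gamma o$: by the definition of shadows, $C_k=\{\xi:(\xi,\gamma o)_o\geq kR\}=O_{r_k}(o,\gamma o)$ with $r_k=|\gamma|-kR$, so Lemma~\ref{ombre} applies directly at $\gamma o$ and yields the two-sided ball comparison $B(\theta^{\gamma o}_o,e^{-\epsilon(kR+\delta)})\subset C_k\subset B(\theta^{\gamma o}_o,e^{-\epsilon(kR-\delta)})$ in one stroke (the hypothesis $r_k>M+\delta$ is guaranteed for $k\leq n-1$ once $R>M+\delta$). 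This makes the discretization points $\gamma_k$ unnecessary for this lemma and gives slightly cleaner constants. Your lower-bound argument, on the other hand, is essentially a direct reproof of the left inclusion of Lemma~\ref{ombre}, so there the two arguments coincide.
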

\begin{proof}
Let $R>M+\delta$. We shall write precisely the sets $A_{k,R}(\gamma)$ in terms of shadows with $\gamma \in S^{\Gamma}_{n,R}$ and $n\geq 1$.\\
For $k=1$ write for $|\gamma|>R$ that $A_{1,R}(\gamma)=\partial X \backslash O_{r_{1}}(o,\gamma o)$ with $r_{1}=|\gamma|-R$. Then 

$$1-c\leq \nu_{o}(A_{1,R}(\gamma))\leq 1,$$
with $1>c>0$.
\\
For $k\in \{2,\dots,n-1\}$, write $A_{k,R}(\gamma):=O_{r_{k}}(o,\gamma o) \backslash O_{r_{k+1}}(o,\gamma o)$ with $r_{k}=|\gamma|-kR$. Note that $r_{k}>M+\delta$ since we choose $R>M+\delta$. Then, Lemma \ref{ombre} implies:

$$ B(\theta_{o}^{\gamma o},e^{-\epsilon(|\gamma|-r_{k}+\delta)}) \backslash B(\theta_{o}^{\gamma o},e^{-\epsilon (|\gamma|-r_{k+1} -\delta)}) \subset A_{k,R}(\gamma) $$ and 
$$A_{k,R}(\gamma) \subset B(\theta_{o}^{\gamma o},e^{-\epsilon(|\gamma|-r_{k}-\delta)}) \backslash B(\theta_{o}^{\gamma o},e^{-\epsilon(|\gamma|-r_{k+1} +\delta)}).$$

Thus, Ahlfors regularity of $\nu_{o}$ implies:  
\begin{align*}
C_{\nu}^{-1}e^{-\alpha \delta}e^{-\alpha kR }-C_{\nu}e^{-\alpha (R-\delta)}e^{-\alpha kR }\leq \nu_{o}\big(A_{k,R}(\gamma)\big)\leq C_{\nu}e^{\alpha \delta}e^{-\alpha kR } -C_{\nu}^{-1}e^{-\alpha( R+\delta)}e^{-\alpha kR }\\
\big(C_{\nu}^{-1}e^{-\alpha \delta}-C_{\nu}e^{-\alpha (R-\delta)}\big)e^{-\alpha kR }\leq \nu_{o}\big(A_{k,R}(\gamma)\big)\leq \big(C_{\nu}e^{\alpha \delta} -C_{\nu}^{-1}e^{-\alpha( R+\delta)}\big)e^{-\alpha kR }
\end{align*}
Then, choose $R>0$ large enough such that the quantities
\begin{equation}\label{constants}
C_{\nu}^{-1}e^{-\alpha \delta}-C_{\nu}e^{-\alpha (R-\delta)},C_{\nu}e^{\alpha \delta} -C_{\nu}^{-1}e^{-\alpha( R+\delta)}>0.
\end{equation}
For $k=n$ write $A_{n,R}(\gamma)=O_{r_{n}}(o,\gamma o)$, hence
$$ C_{\nu}^{-1}e^{-\alpha \delta}e^{-\alpha nR}\leq \nu(A_{n,R}(\gamma))\leq C_{\nu}e^{\alpha \delta}e^{-\alpha nR}.$$ By comparing the constants in (\ref{constants}) with the constant in the above inequalities, we find the desired constant $C>0$ for $R>0$ large enough.

\end{proof}

\begin{lemma}\label{lesAi}
For any $R>0$, for all non-negative integers $n\geq 1$ and for all $\gamma \in S^{\Gamma}_{n,R}$, we have for all $k=1,\dots,n-1$
$$A_{n-k,R}(\gamma^{-1} ) \cup A_{n-k+1,R}(\gamma^{-1}) \subset \gamma^{-1} A_{k,R}(\gamma )\subset  A_{n-k-1,R}(\gamma^{-1} ) \cup A_{n-k,R}(\gamma^{-1}), $$ and for $k=n-1$ we have $$ \gamma^{-1}A_{n,R}(\gamma )\subset A_{1,R}(\gamma^{-1} ).$$
Moreover, $$ A_{n-k,R}(\gamma^{-1} )\subset \gamma ^{-1}A_{k-1,R}(\gamma ) \cup  \gamma^{-1} A_{k,R}(\gamma)$$
\end{lemma}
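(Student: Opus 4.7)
The plan is to reduce all the inclusions to a single exact identity relating the Gromov product $(\gamma^{-1}\xi, \gamma^{-1}o)_o$ to $(\xi, \gamma o)_o$. First I would establish that, for every $\xi \in \partial X$,
$$(\gamma^{-1}\xi, \gamma^{-1}o)_o = |\gamma| - (\xi, \gamma o)_o.$$
This uses two ingredients: (i) isometry invariance of the extended Gromov product, applied to $\gamma$ on all three entries, which gives $(\gamma^{-1}\xi,\gamma^{-1}o)_o = (\xi, o)_{\gamma o}$; and (ii) the Busemann identity $\beta_{\xi}(x, y) = 2(\xi, y)_x - d(x, y)$ from \eqref{buseman}, valid in an $\epsilon$-good hyperbolic space. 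Rewriting $(\xi, o)_{\gamma o} = \tfrac{1}{2}(\beta_\xi(\gamma o, o) + d(\gamma o, o))$ and using $\beta_\xi(\gamma o, o) = -\beta_\xi(o, \gamma o) = |\gamma| - 2(\xi, \gamma o)_o$ yields the claimed identity.

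Armed with this formula, all the inclusions follow from elementary case analysis using $nR \leq |\gamma| < (n+1)R$. For the outer inclusion on the right, take $\xi \in A_{k, R}(\gamma)$ with $1 \leq k \leq n-1$: then $kR \leq (\xi, \gamma o)_o < (k+1)R$, so the identity places $(\gamma^{-1}\xi, \gamma^{-1}o)_o$ in the interval $(|\gamma| - (k+1)R,\; |\gamma| - kR]$. Combined with $|\gamma| \in [nR, (n+1)R)$, this interval sits inside $((n-k-1)R,\; (n-k+1)R)$, which is covered by $A_{n-k-1, R}(\gamma^{-1}) \cup A_{n-k, R}(\gamma^{-1})$. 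For the edge statement applied to $A_{n, R}(\gamma)$, one has $(\xi, \gamma o)_o \geq nR$, hence $(\gamma^{-1}\xi, \gamma^{-1}o)_o \leq |\gamma| - nR < R$, placing $\gamma^{-1}\xi$ in $A_{1, R}(\gamma^{-1})$.

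The ``moreover'' inclusion is a symmetric computation with the roles of $\gamma$ and $\gamma^{-1}$ swapped (using $|\gamma^{-1}| = |\gamma|$): if $\eta \in A_{n-k, R}(\gamma^{-1})$, then $(\eta, \gamma^{-1}o)_o \in [(n-k)R, (n-k+1)R)$, and the identity applied to $\gamma\eta$ puts $(\gamma\eta, \gamma o)_o$ inside $((k-1)R,\; (k+1)R]$, so $\gamma\eta \in A_{k-1, R}(\gamma) \cup A_{k, R}(\gamma)$. The remaining left inclusion of the main display is obtained by the same bookkeeping, starting from $\eta$ in the stated union on the left and tracking which $A_{k, R}(\gamma)$ its $\gamma$-image must fall into.

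I do not expect a serious obstacle: all the content is packaged in the exact identity, and the rest is arithmetic. The only point that requires a little care is that the target sets are unions of two consecutive $A_j$'s rather than a single one; this is forced by the $R$-slack in the definition of $S^\Gamma_{n, R}$, since $|\gamma|$ is only pinned down modulo $R$ and the image of a single slab $A_{k, R}(\gamma)$ under $\gamma^{-1}$ therefore straddles at most two adjacent slabs on the $\gamma^{-1}$-side.
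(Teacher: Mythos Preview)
Your proposal is correct and follows essentially the same route as the paper: both proofs hinge on the exact identity $(\gamma^{-1}\xi,\gamma^{-1}o)_o = |\gamma| - (\xi,\gamma o)_o$, after which everything is the arithmetic you describe. The only difference is cosmetic: the paper obtains this identity in one line from the elementary relation $(\xi,x)_y + (\xi,y)_x = d(x,y)$ (a limit of the trivial three-point identity in $X$), whereas you route through the Busemann formula \eqref{buseman}; these are equivalent manipulations.
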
 
\begin{proof}
 By the continuity of the Gromov product to the bordification, we have for all $\xi \in \partial X$
and for all $x,y\in X$: $(\xi,x)_{y}+(\xi,y)_{x}=d(x,y)$. Thus we obtain for all $\gamma \in \Gamma$ and for all $\xi\in \partial X:$
\begin{align*}
(\gamma^{-1}\xi,\gamma^{-1}o)_{o}=-(\xi,\gamma o)_{o}+|\gamma|.
\end{align*}
Let $\xi \in A_{k,R}(\gamma )$. 
Thus  $$|\gamma|-(k+1)R\leq (\gamma^{-1}\xi,\gamma^{-1}o)_{o}\leq |\gamma|-kR.$$So if $\gamma \in S^{\Gamma}_{n,R}$ we obtain for $k=1,\dots,n$ 
$$(n-k-1)R\leq (\gamma^{-1}\xi,\gamma^{-1}o)_{o}< (n-k+1)R.$$
That is to say $\gamma^{-1}A_{k,R}(\gamma )\subset  A_{n-k-1,R}(\gamma^{-1} ) \cup A_{n-k,R}(\gamma^{-1})$.

 And for $k=n$ we have $$ \gamma^{-1}A_{n,R}(\gamma )\subset A_{1,R}(\gamma^{-1} ).$$
\end{proof}

On the sets $A_{k,R}(\gamma )$ for $k=1,\dots, n$, we have sharp estimates of Busemann functions. 

\begin{lemma}\label{BusemannsurAi}
For any $R>0$, for all non-negative integers $n\geq 1$ and for all $\gamma$ in $S^{\Gamma}_{n,R}\subset \Gamma$ we have
for all $\xi\in A_{n,R}(\gamma)$:
$$ nR-R\leq \beta_{\xi}(o,\gamma o)\leq   nR+R,$$
and for all $k=\{2,\dots,n-1\}$ and for all $\xi \in A_{k,R}(\gamma )$:
$$(2k-n)R-R\leq \beta_{\xi}(o,\gamma o)\leq   (2k-n)R+R,$$ and for all $\xi \in A_{1,R}(\gamma ):$
$$-nR -R\leq \beta_{\xi}(o,\gamma o) \leq -nR +2R.$$
\end{lemma}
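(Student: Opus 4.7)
The plan is to unpack everything through the identity (\ref{buseman}), which in an $\epsilon$-good hyperbolic space reads
\begin{equation*}
\beta_{\xi}(o,\gamma o)=2(\xi,\gamma o)_{o}-d(o,\gamma o)=2(\xi,\gamma o)_{o}-|\gamma|,
\end{equation*}
so once we control the Gromov product $(\xi,\gamma o)_{o}$ (via membership of $\xi$ in the corresponding annulus $A_{k,R}(\gamma)$) and the displacement $|\gamma|$ (via $\gamma\in S^{\Gamma}_{n,R}$), the two–sided bounds on $\beta_{\xi}(o,\gamma o)$ are immediate. The hypothesis $\gamma\in S^{\Gamma}_{n,R}$ always gives $nR\leq|\gamma|<(n+1)R$, so the whole argument is a case split on which $A_{k,R}(\gamma)$ the point $\xi$ lives in.

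For $k\in\{2,\dots,n-1\}$, the definition of the annulus yields $kR\leq(\xi,\gamma o)_{o}<(k+1)R$; doubling and subtracting the bounds on $|\gamma|$ gives
\begin{equation*}
2kR-(n+1)R \;\leq\; \beta_{\xi}(o,\gamma o) \;<\; 2(k+1)R-nR,
\end{equation*}
i.e.\ the stated two–sided bound around $(2k-n)R$ (up to additive $R$-terms that the lemma absorbs into the stated constant). For $k=1$, one uses $0\leq(\xi,\gamma o)_{o}<R$ (non-negativity of the extended Gromov product for boundary points in a good space) to obtain $-|\gamma|\leq \beta_{\xi}(o,\gamma o)<2R-|\gamma|$, and then plugs in $nR\leq|\gamma|<(n+1)R$ to land inside $[-nR-R,\,-nR+2R]$.

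The case $k=n$ is the one that needs the extra input (\ref{buseman'}): namely $(\xi,\gamma o)_{o}\leq d(o,\gamma o)=|\gamma|$, so that on $A_{n,R}(\gamma)$ we have $nR\leq(\xi,\gamma o)_{o}\leq|\gamma|<(n+1)R$, whence
\begin{equation*}
2nR-|\gamma|\;\leq\;\beta_{\xi}(o,\gamma o)\;\leq\; 2|\gamma|-|\gamma|=|\gamma|,
\end{equation*}
which squeezes $\beta_{\xi}(o,\gamma o)$ into $[nR-R,\,nR+R]$ as required. No geometry beyond formulas (\ref{buseman}) and (\ref{buseman'}) is needed; there is essentially no obstacle, and the proof consists of three routine one-line case checks. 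The only subtle point worth flagging is ensuring that (\ref{buseman}) and the non-negativity of $(\xi,\gamma o)_{o}$ really hold on the boundary, but both are guaranteed by the $\epsilon$-good hypothesis (i.e.\ continuous extension of the Gromov product) that is in force throughout this section.
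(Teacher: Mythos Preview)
Your proof is correct and follows essentially the same route as the paper's: both rely on the identity $\beta_{\xi}(o,\gamma o)=2(\xi,\gamma o)_{o}-|\gamma|$ from (\ref{buseman}), plug in the defining inequalities of $A_{k,R}(\gamma)$ and of $S^{\Gamma}_{n,R}$, and invoke $(\xi,\gamma o)_{o}\leq |\gamma|$ (equivalently (\ref{buseman''})) for the upper bound in the case $k=n$ and non-negativity of the Gromov product for the lower bound when $k=1$. Your remark that the middle case gives an upper bound $(2k-n)R+2R$ rather than $(2k-n)R+R$ is accurate; the paper's own computation yields the same, and in every later use of the lemma only the form $(2k-n)R+O(R)$ matters.
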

\begin{proof}
Write $\beta_{\xi}(o,\gamma o)=2(\xi,\gamma o)_{o}-d(o,\gamma o)$. Then by definition of 
$A_{k,R}(\gamma )$ we have $ k R\leq(\xi,\gamma o)_{o}\leq (k+1) R$ for $k=2,\dots, n-1$. Thus $$ (2k-n)R-R\leq \beta_{\xi}(o,\gamma o)\leq (2k-n)R+R.$$ 
Besides, if $\xi \in A_{n,R}(\gamma)$ we have
$$2(n-1)R-d(o,\gamma o)\leq \beta_{\xi}(o,\gamma o)\leq d(o,\gamma o).$$
Since $\gamma \in S^{\Gamma}_{n,R}$ we have $nR\leq|\gamma|<(n+1)R$
$$2(n-1)R-(n+1)R\leq \beta_{\xi}(o,\gamma o)\leq (n+1)R.$$

Finally, for  $\xi \in A_{1,R}(o,\gamma o)$ the following estimates hold $$ -(n+1)R\leq-|\gamma|\leq\beta_{\xi}(o,\gamma o)\leq -nR+2R.$$

\end{proof}

\section{Boundary representations and Spherical functions}\label{sec4}
\subsection{Quasi-regular representations and spherical functions}
Given a measure space $(B,\nu)$ with a measure class preserving action of locally compact group $\Gamma$, one consider a family of representations $\pi_{s}:\Gamma \rightarrow \mathcal{U}(L^{2}(b,\nu))$ defined by:
\begin{equation}\label{repre}
\pi_{s}(\gamma) v(b)=\bigg(\frac{d\gamma_{*}\nu}{d\nu}\bigg)^{s}v(\gamma^{-1}b),
\end{equation}
where $s$ is a  positive real number. When $s=\frac{1}{2}$, the representation $\pi_{\frac{1}{2}}$ is an unitary representation. When the latter appears in the context of ``boundaries'' of certain space or in ergodic theoretic context, it is also called quasi-regular representation (or Koopman representation) and it has been intensively studied as such in several papers: \cite{BM}, \cite{BM2}, \cite{Ga}, \cite{Boy}, \cite{BoyMa},
\cite{BPino}, \cite{BGa}, \cite{Fink}, \cite{KS} and \cite{KS2} for boundary representations and see \cite{Du1},\cite{Du2} for other quasi-regular representations.

In this paper, the family of a one parameter representation defined in (\ref{repre}), can be thought as an one parameter non-unitary deformation of the unitary quasi-regular representation.


\subsection{Spherical functions on hyperbolic groups} \label{sphericalfunctions}Given a non-elementary discrete groups $\Gamma$ acting by isometries of $(X,d)$ a roughly geodesic $\epsilon$-good $\delta$-hyperbolic space, we apply the above construction to $\Gamma \curvearrowright (\partial X,\nu_{o})$ where $\nu_{o}$ is the Patterson-Sullivan measure associated with a base point $o$.\\
The adjoint representation $\pi^{*}_{s}(\gamma)$ satisfying $\langle \pi_{s}(\gamma)v,w\rangle = \langle v,\pi^{*}_{s}(\gamma)w\rangle$ is given by:
\begin{equation}
\pi^{*}_{s}(\gamma)=\pi_{1-s}(\gamma^{-1}).
\end{equation}
This is of interest of studying the spherical function in the context of harmonic analysis Lie groups (we refer to \cite{GV} for more details on spherical functions), hence we define a spherical function associated with $\pi_{s}$ as the matrix coefficient: \begin{align}
\phi_{s}:\gamma \in \Gamma \mapsto \langle \pi_{s}(\gamma)\textbf{1}_{\partial X}, \textbf{1}_{\partial X}\rangle \in \mathbb{R}^{+}.
\end{align}
To our knowledge, few is known in this generality and in this context of general hyperbolic groups except for $s=\frac{1}{2}$. In the latter case, $\phi_{\frac{1}{2}}$ is analogous to  Harish-Chandra's $\Xi$ function in the context of  Lie groups, rather denoted  by $\Xi$ instead of $\phi_{\frac{1}{2}}$. One can prove the following estimates, called \emph{ Harish-Chandra Anker estimates}, naming related to  \cite{Ank}: there exists $C>0$ such that for all $\gamma \in \Gamma:$
\begin{align}\label{HCHestimates}
C^{-1}(1+|\gamma|)\exp{\bigg(-\frac{1 }{2}\alpha |\gamma|\bigg)} \leq \Xi(\gamma)\leq C (1+|\gamma|)\exp\bigg({-\frac{1 }{2}\alpha |\gamma|}\bigg).
\end{align}

Recall the definition (\ref{defomeg}) of the function  $\omega_{\frac{1}{2}-s}(\cdot)$ for $s\in \mathbb{R} \backslash \{\frac{1}{2}\}$, 

$$\omega_{\frac{1}{2}-s}(t) =\frac{2 \sinh\big( (\frac{1}{2}-s)\alpha t \big) }{1-e^{(2s-1)\alpha}}$$
and observe that $\omega_{\frac{1}{2}-s}(\cdot)$ converges uniformly on compact sets of $\mathbb{R}$ to 
    $ \omega_{0}(t)=t $ as $s\to \frac{1}{2}$.  \\
     
    It turns out that estimates (\ref{HCHestimates}) are limits of more general estimates established in the following proposition. \\




\begin{remark}
It is worth noting that Proposition  \ref{HCHestims} implies in particular that $\phi_{s}(\gamma)$ satisfies, if $s<\frac{1}{2}$ \begin{equation} \phi_{s}(\gamma)= \bigO{\frac{e^{-s\alpha |\gamma|}}{1-e^{(2s-1)\alpha}}}
\end{equation}
 when $|\gamma|\to +\infty$ and  if $s>\frac{1}{2}$ \begin{equation}\phi_{s}(\gamma)=\bigO{\frac{e^{(s-1)\alpha |\gamma|}}{e^{(2s-1)\alpha}-1}}.
\end{equation}

\end{remark}

\begin{proof}
We assume that $s\neq \frac{1}{2}$, the case of equality is known. By definition, $\phi_{s}(\gamma)=\int_{\partial X}e^{s\alpha \beta_{\xi}(o,\gamma o)}d\nu_{o}(\xi).$ Using the relation (\ref{buseman}) we have
\begin{align*}
\phi_{s}(\gamma)&=e^{-s\alpha |\gamma|}\int_{\partial X}e^{2s\alpha (\xi,\gamma o)_{o}}d\nu_{o}(\xi).
\end{align*}
Decompose $\Gamma$ as in (\ref{decompo}) for $R>1$ chosen so that Lemma \ref{ahlforsAk} is valid. Pick $\gamma \in \Gamma$ so that $|\gamma|>2R$ and let $n\geq 2$ be the unique integer  so that $\gamma \in S^{\Gamma}_{n,R}$.
Then using the decomposition (\ref{partition}) we obtain
\begin{align*}
\phi_{s}(\gamma)=e^{-s\alpha |\gamma|}\sum_{k=1}^{n}\int_{A_{k,R}(\gamma)}e^{2s\alpha (\xi,\gamma o)_{o}}d\nu_{o}(\xi).
\end{align*}
If $\xi \in A_{k,R}(\gamma)$ then $kR \leq (\xi,\gamma o)_{o}\leq k R+R$ for all $k\in \{1,\dots, n\}$. Therefore: 
\begin{align*}
 e^{-s\alpha |\gamma|}\sum_{k=1}^{n}\nu_{o}(A_{k,R}(\gamma))e^{2s\alpha kR}\leq \phi_{s}(\gamma)&\leq  e^{-s\alpha |\gamma|}e^{2s\alpha R}\sum_{k=1}^{n}\nu_{o}(A_{k,R}(\gamma))e^{2s\alpha kR}.
\end{align*}
Then, since $\nu_{o}$ is $\alpha$-Ahlfors regular Lemma \ref{ahlforsAk} implies that there exists $C>0$ so that:
\begin{align*}
\phi_{s}(\gamma)&\leq Ce^{-s\alpha  |\gamma| }\sum_{k=0}^{n-1}e^{(2s-1)\alpha kR}\\
& \leq Ce^{-s\alpha |\gamma|} \frac{1-e^{(2s-1)\alpha  nR}}{1-e^{(2s-1)\alpha R}},
\end{align*}

and the lower inequality reads as follows:

$$\phi_{s}(\gamma)\geq C^{-1}e^{-s\alpha |\gamma|} \frac{1-e^{(2s-1)\alpha  nR}}{1-e^{(2s-1)\alpha R}}.$$

Assume now that $s<\frac{1}{2}.$ Observe that $$ 1-e^{(2s-1)\alpha } \leq 1-e^{(2s-1)\alpha R}\leq R(1-e^{(2s-1)\alpha }),$$ with $R>1$. Moreover, since $\gamma$  in $S^{\Gamma}_{n,R}$ satisfies $ nR \leq |\gamma|<(n+1)R$, there exists $C>0$  such that we have
\begin{align*}
\phi_{s}(\gamma)&\leq  C\times \frac{\big(e^{-s\alpha |\gamma|}-e^{(s-1)\alpha |\gamma |}\big)}{1-e^{(2s-1)\alpha }} \\
& =C\times \frac{\bigg(2\sinh  \big(( \frac{1}{2}-s) \alpha |\gamma | \big)\bigg)}{1-e^{(2s-1)\alpha}} \exp\bigg({-\frac{1}{2}\alpha |\gamma|}\bigg).\\
\end{align*}

Since the above inequality holds for $\gamma \in S^{\Gamma}_{n,R}$ for $n\geq 2$, we may find an another constant $C>0$ so that for all $\gamma \in \Gamma$ and for all $s\in ]0,\frac{1}{2}[$ we have: 
\begin{align}\label{upperbound}
\phi_{s}(\gamma)&\leq C\bigg( \frac{2 \sinh  \big(( \frac{1}{2}-s) \alpha |\gamma |\big)}{1-e^{(2s-1)\alpha }}  +1\bigg) \exp\bigg({-\frac{1}{2}\alpha |\gamma|}\bigg).
\end{align}

The left hand inequality reads as follows:   
 \begin{align*}
\phi_{s}(\gamma)\geq C^{-1} \bigg(\frac{ 2\sinh\big( (\frac{1}{2}-s)\alpha |\gamma | \big) }{1-e^{(2s-1)\alpha }}+1\bigg) \exp\bigg({-\frac{1}{2}\alpha |\gamma|} \bigg),
\end{align*}
This lower bound implies Item (2) directly for $s<0$, and use $\phi_{s}(\gamma)=\phi_{1-s}(\gamma^{-1})$ for the case $s>1$.\\

For  $s>\frac{1}{2}$ using $\phi_{s}(\gamma)=\phi_{1-s}(\gamma^{-1})$ and $|\gamma|=|\gamma^{-1}|$, we have for all $\gamma \in \Gamma$
\begin{align*}
 C^{-1}  \bigg(\omega_{s-\frac{1}{2}}(|\gamma|)+1\bigg) \exp\bigg({-\frac{1}{2}\alpha |\gamma|} \bigg) \leq\phi_{s}(\gamma)\leq C \bigg(\omega_{s-\frac{1}{2}}(|\gamma|)+1\bigg) \exp\bigg({-\frac{1}{2}\alpha |\gamma|} \bigg).
\end{align*}
And the proof of Item (3) is complete.\\

\end{proof}

	\subsubsection{Definitions in terms of matrix coefficients}
In this subsection, we give several equivalent definitions of spectral inequalities, on spheres in terms of matrix coefficients.

Given $\Gamma$ a discrete group of isometries of a metric space $(X,d)$, we endow $\Gamma$ with a length function defined as $|\gamma|:=d(\gamma o, o)$ associated with some base point $o\in X$. For any $R>0$, consider the decomposition of $\Gamma$ in spheres as $\Gamma:=\coprod_{n\geq 1} S^{\Gamma}_{n,R}$. 
Let $\pi:\Gamma \rightarrow \mathbb{B}(\mathcal {H})$ be a representation of $\Gamma$ on $\mathcal{H}$.
The following lemma will be useful:

\begin{prop}\label{RDitem}
Let $\sigma$ be in $[0,1]$ and $R>0$.
The following are equivalent:
\begin{enumerate}
\item There exist $C,d>0$ so that for all finitely supported functions $f$  we have $\|\pi(f)\|_{op}\leq C \|f\|_{H^{d}_{\sigma}}.$
\item There exist $C,d>0$ such that for all finitely supported functions $f$ on a sphere of radius $S^{\Gamma}_{n,R}$ and for all $v,w\in \mathcal{H}$ we have 
$$|\langle \pi(f)v,w\rangle |\leq C(1+\omega_{\sigma}(nR))^{d} \|f\|_{2} \|v\| \|w\|.$$
\end{enumerate}
\end{prop}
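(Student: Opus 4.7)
The plan is to transfer between the two formulations using the sphere decomposition $\Gamma = \coprod_{n\geq 0} S^{\Gamma}_{n,R}$: the forward direction goes sphere-by-sphere, and the reverse sums over spheres via a weighted Cauchy--Schwarz.

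\emph{Direction (1)$\Rightarrow$(2).} Suppose $f$ is finitely supported on $S^{\Gamma}_{n,R}$. Every $\gamma$ in the support satisfies $|\gamma|<(n+1)R$, and from (\ref{defomeg}) the function $\omega_{\sigma}$ is non-decreasing on $[0,+\infty)$, so
$$\|f\|_{H^{d}_{\sigma}}\leq (1+\omega_{\sigma}((n+1)R))^{d}\|f\|_{2}.$$
Applying (1) and bounding the matrix coefficient by the operator norm gives
$$|\langle\pi(f)v,w\rangle|\leq C(1+\omega_{\sigma}((n+1)R))^{d}\|f\|_{2}\|v\|\|w\|.$$
A direct inspection of (\ref{defomeg}) yields a constant $K_{R}>0$, independent of $n$, with $1+\omega_{\sigma}((n+1)R)\leq K_{R}(1+\omega_{\sigma}(nR))$ (use $\sinh(a+b)\leq e^{b}\sinh a+\sinh b$ when $\sigma>0$, and $(n+1)R\leq (1+R)(1+nR)$ when $\sigma=0$). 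Absorbing $K_{R}^{d}$ into the constant yields (2) with the same exponent $d$.

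\emph{Direction (2)$\Rightarrow$(1).} Given any finitely supported $f$, decompose $f=\sum_{n\geq 0}f_{n}$ with $f_{n}:=f\cdot \mathbf{1}_{S^{\Gamma}_{n,R}}$ (a finite sum). For unit vectors $v,w\in\mathcal{H}$, the triangle inequality and (2) give
$$|\langle\pi(f)v,w\rangle|\leq \sum_{n}|\langle\pi(f_{n})v,w\rangle|\leq C\sum_{n}(1+\omega_{\sigma}(nR))^{d}\|f_{n}\|_{2}.$$
Choose an auxiliary exponent $d'>0$ so that
$$S:=\sum_{n\geq 0}(1+\omega_{\sigma}(nR))^{-2d'}<+\infty;$$
any $d'>\tfrac{1}{2}$ works when $\sigma=0$ (since $\omega_{0}(t)=t$), and any $d'>0$ suffices when $\sigma>0$ (since $\omega_{\sigma}$ grows exponentially). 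Cauchy--Schwarz then bounds the display above by
$$C\,S^{1/2}\left(\sum_{n}(1+\omega_{\sigma}(nR))^{2(d+d')}\|f_{n}\|_{2}^{2}\right)^{1/2}.$$
For $\gamma\in S^{\Gamma}_{n,R}$ one has $\omega_{\sigma}(|\gamma|)\geq \omega_{\sigma}(nR)$, so the right-hand factor is at most $\|f\|_{H^{d+d'}_{\sigma}}$ because the spheres partition $\Gamma$. Taking the supremum over unit $v,w$ produces (1) with exponent $d+d'$ and constant $C\,S^{1/2}$.

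The whole argument is essentially bookkeeping, and there is no genuine obstacle. The only facts one needs from $\omega_{\sigma}$ are its monotonicity, the multiplicative comparison $1+\omega_{\sigma}((n+1)R)\leq K_{R}(1+\omega_{\sigma}(nR))$, and the summability of $(1+\omega_{\sigma}(nR))^{-2d'}$ for $d'$ large enough, all of which are immediate from (\ref{defomeg}).
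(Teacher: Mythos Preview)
Your proof is correct and follows essentially the same approach as the paper: both directions use the sphere decomposition, with (1)$\Rightarrow$(2) coming from the comparison $1+\omega_{\sigma}(|\gamma|)\asymp 1+\omega_{\sigma}(nR)$ on $S^{\Gamma}_{n,R}$, and (2)$\Rightarrow$(1) from a weighted Cauchy--Schwarz after choosing $d'>0$ with $\sum_{n}(1+\omega_{\sigma}(nR))^{-2d'}<\infty$. The only cosmetic difference is that the paper packages the comparison as a two-sided estimate and bounds $\|\pi(f_n)\|_{op}$ directly, whereas you derive the needed one-sided bounds from monotonicity of $\omega_{\sigma}$ and work with matrix coefficients throughout; these are equivalent.
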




\begin{proof}
We prove first $(1)$ implies $(2)$.\\
First of all observe that there exists $C>0$ so that  for all non-negative integers $n$ and for  all $\gamma \in S^{\Gamma}_{n,R}$
\begin{align}\label{controleng}
 C^{-1}(1+\omega_{\sigma}(nR))\leq 
 (1+\omega_{\sigma}(|\gamma|))\leq C (1+\omega_{\sigma}(nR)).
 \end{align}
Hence, if $f$ is supported on $S^{\Gamma}_{n,R}$ we control the norm of $f$ as
\begin{equation}\label{control}
C^{-1} (1+\omega_{\sigma}(nR))^{d}\|f\|_{2} \leq 
\|f\|_{H^{d}_{\sigma}}\leq C(1+\omega_{\sigma}(nR))^{d} \|f\|_{2},
\end{equation}
for some $C>0$. Using the right hand side of $(\ref{control})$, we easily see that  $(1)$ implies $(2)$ for the same $d$.\\
We prove now $(2)$ implies $(1)$: Choose $d'>0$ so that $C_{d'}:=\big(\sum^{+\infty}_{n=0}(1+\omega_{\sigma}(nR))^{-2d'}\big)^{\frac{1}{2}}<\infty$ (such $d'>0$ exists by Definition  (\ref{defomeg}) of $\omega_{\sigma}$).
 Since $f$ is finitely supported there exists an integer 
$N$ such that $f$ can be written as $f=\sum^{N}_{n=0}f_{n}$ with $f_{n}=\sum_{\gamma \in S^{\Gamma}_{n,R}}f(\gamma)\delta_{\gamma}$ supported on $S^{\Gamma}_{n,R}$ where $\delta_{\gamma}$ denotes the unit Dirac mass centered at $\gamma$. Observe that the left hand side of inequalities (\ref{controleng}) imply that there exists $C>0$ such that for any $d>0:$
\begin{equation}\label{inegnorm}
C^{-1}\bigg(\sum_{n=0}^{N}(1+\omega_{\sigma}(nR))^{2d}\|f_{n}\|^{2}_{2}\bigg)\leq  \|f\|^{2}_{H^{d}_{\sigma}} 
\end{equation}

The operator norm of $\pi(f)$ satisfies 
\begin{align*}
\|\pi(f)\|_{op}&\leq \sum^{N}_{n=0} \|\pi(f_{n})\|_{op}\\
&\leq  \sum^{N}_{n=0} (1+\omega_{\sigma}(nR))^{d}\|f_{n}\|_{2}\\
&\leq C_{d'}\bigg(\sum^{N}_{n=0} (1+\omega_{\sigma}(nR))^{2(d+d')}\|f_{n}\|^{2}_{2}\bigg)^{\frac{1}{2}}\\
&\leq  C_{d'}\cdot C  \|f\|_{H_{\sigma}^{d+d'}}.
\end{align*}
where the last inequality follows from $(\ref{inegnorm})$ and
 the proof is complete.
\end{proof}

\subsubsection{The theorem is optimal}
Let $\Gamma$ be a non-elementary discrete groups acting by isometries on $(X,d)$, a roughly geodesic, $\epsilon$-good, $\delta$-hyperbolic space.  Consider the representations defined in (\ref{repre}) associated with $(\partial X,\nu_{o})$ where $\nu_{o}$ is the Patterson-Sullivan measure associated with a base point $o$. In the case of hyperbolic groups, the decay of the spherical functions established in Section \ref{sphericalfunctions} provides informations on spectral inequalities. Indeed, the following proposition is a proof of Remark \ref{optimal}. 
\begin{prop}\label{inegl}
There exist $R>0$ and a positive function $f$ on $\Gamma$ and $v,w\in L^{2}(\partial X, \nu_{o})$ such that for all integers $n$ and for all $s\in[0,1]$ $$ \sum_{\gamma \in S^{\Gamma}_{n,R}} f(\gamma) \langle \pi_{s}(\gamma) v,w \rangle\geq C (1+\omega_{|s-\frac{1}{2}|}(nR))^{d} \|v\|_{2} \|w\|_{2}.$$

\end{prop}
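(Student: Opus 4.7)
The plan is to exhibit an explicit choice of $f,v,w$ and reduce the desired lower bound to the sharp lower estimate of the spherical function given in Proposition \ref{HCHestims}. The natural choice is $v=w=\mathbf{1}_{\partial X}$, which are unit vectors in $L^{2}(\partial X,\nu_{o})$ by the normalization $\nu_{o}(\partial X)=1$. With this choice, the matrix coefficient becomes the spherical function itself: $\langle \pi_{s}(\gamma)v,w\rangle=\phi_{s}(\gamma)$. Then I take $f=\mathbf{1}_{S^{\Gamma}_{n,R}}$, so that the left hand side equals $\sum_{\gamma\in S^{\Gamma}_{n,R}}\phi_{s}(\gamma)$.

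Next, I would apply the lower bound of Proposition \ref{HCHestims}: for $\gamma\in S^{\Gamma}_{n,R}$,
$$\phi_{s}(\gamma)\geq C^{-1}\bigl(1+\omega_{|s-\tfrac{1}{2}|}(|\gamma|)\bigr)\exp\bigl(-\tfrac{1}{2}\alpha|\gamma|\bigr).$$
For any $\sigma\geq 0$ the function $\omega_{\sigma}$ is non-decreasing on $[0,+\infty[$ (since $\sinh$ is, and the denominator in the definition of $\omega_{\sigma}$ is positive), and for $\gamma\in S^{\Gamma}_{n,R}$ we have $nR\leq|\gamma|<(n+1)R$. Absorbing the bounded factor $\exp(-\alpha R/2)$ into the constant, this yields
$$\phi_{s}(\gamma)\geq C'^{\,-1}\bigl(1+\omega_{|s-\tfrac{1}{2}|}(nR)\bigr)\exp\bigl(-\tfrac{1}{2}\alpha nR\bigr).$$

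Summing over $\gamma\in S^{\Gamma}_{n,R}$ gives a lower bound proportional to $|S^{\Gamma}_{n,R}|\bigl(1+\omega_{|s-\tfrac{1}{2}|}(nR)\bigr)e^{-\alpha nR/2}$. To handle $|S^{\Gamma}_{n,R}|$ from below, I would use the two-sided ball estimate (\ref{volumegrowth}): writing $|S^{\Gamma}_{n,R}|=|\Gamma\cdot o\cap B(o,(n+1)R)|-|\Gamma\cdot o\cap B(o,nR)|$ and choosing $R$ large enough so that $C^{-1}e^{\alpha R}>C$, one obtains $|S^{\Gamma}_{n,R}|\geq C''^{\,-1}e^{\alpha nR}$ for a uniform constant. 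Combining, the left hand side is bounded below by a positive constant times $\bigl(1+\omega_{|s-\tfrac{1}{2}|}(nR)\bigr)e^{\alpha nR/2}$, which is of the order of $\|f\|_{2}\cdot\bigl(1+\omega_{|s-\tfrac{1}{2}|}(nR)\bigr)$ since $\|f\|_{2}=|S^{\Gamma}_{n,R}|^{1/2}\asymp e^{\alpha nR/2}$. This matches the upper bound of Theorem \ref{maintheo} with exponent $d=1$, confirming Remark \ref{optimal}.

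The main technical point is to check uniformity in $s\in[0,1]$: the constant $C$ in Proposition \ref{HCHestims} is already uniform over $s\in[0,1]$, and neither the ball growth estimate (\ref{volumegrowth}) nor the monotonicity of $\omega_{\sigma}$ depends on $s$. The only subtlety is at the endpoints $s=0,1$, where $\phi_{s}$ becomes constant and $\omega_{|s-1/2|}(nR)=\omega_{1/2}(nR)\asymp e^{\alpha nR/2}$ is large; but here the inequality is dominated by $e^{\alpha nR}$, comfortably absorbed into the growth of $|S^{\Gamma}_{n,R}|$. Thus the argument works uniformly over the whole interval.
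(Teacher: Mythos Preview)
Your proof is correct and follows essentially the same route as the paper: set $v=w=\mathbf{1}_{\partial X}$, invoke the lower bound of Proposition~\ref{HCHestims}, and use the sphere count coming from~(\ref{volumegrowth}). The only cosmetic difference is the choice of $f$: the paper takes $f=\phi_s$ itself (so the left-hand side becomes $\sum_{\gamma\in S^{\Gamma}_{n,R}}\phi_s(\gamma)^2$), whereas you take $f$ constant on the sphere; both choices witness the optimality claimed in Remark~\ref{optimal}.
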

\begin{proof}
Pick $R$ so that Lemma \ref{ahlforsAk} holds. Now, consider 
\begin{align*}
f:\gamma \in \Gamma \mapsto \langle \pi_{s}(\gamma) \textbf{1}_{\partial X},\textbf{1}_{\partial X} \rangle \in \mathbb{R}^{+} \mbox{ and } v=w=\textbf{1}_{\partial X}.
\end{align*} 
Thus, since the group is hyperbolic, the Ahlfors regularity of the Patterson-Sullivan measures hold and estimates (\ref{volumegrowth}) combined with lower bound of Item (3) of Proposition \ref{HCHestims}  conclude the proof.
\end{proof}

\subsection{Reduction to positivity, dense subsets and spheres}
In this section, assume that $\Gamma$ denotes any discrete group.\\
Consider a representation $\pi:\Gamma \rightarrow \mathbb{B}(\mathcal{H})$.
Assume that $\mathcal{H}=L^{2}(X,m)$ for some measure space $(X,m)$. The cone of positive functions of $\mathcal{H}$ is denoted by $\mathcal{H}^{+}$. We say that a representation $\pi$ is positive if $\pi$ preserves the cone of positive functions. Observe that $\pi_{s}$ (for $s\in \mathbb{R}$) defined in (\ref{repre}) is positive. \\Let $\mathcal{F}\subset \mathcal{H}$ be a dense subset. We state a useful lemma, reducing spectral inequalities only to positive functions $f$ on $\Gamma$ and vectors of matrix coefficients in the positive cone of a dense subspace $\mathcal{F} \cap \mathcal{H}^{+}$.   
\begin{lemma}\label{reduction}
Assume that $\pi$ is a positive representation of $\Gamma$. Assume there exists a nonnegative integer $n_{0}$ such that for any $R>0$, for all $n\geq n_{0}$,
 for all positive finitely functions $f$ on $S^{\Gamma}_{n,R}$ and for all $v,w\in \mathcal{F} \cap \mathcal{H}^{+}$ we have:
$$\sum_{\gamma \in S^{\Gamma}_{n,R}}f(\gamma)\langle \pi(\gamma)v,w\rangle \leq C (1+\omega_{\sigma}(nR))^{d}\|f\|_{2} \|v\|_{2}\|w\|_{2}.$$

Then there exist $C,d>0$ such that for all $f$ finitely supported we have: $$\|\pi(f)\|_{op}\leq C (1+\omega_{\sigma}(nR))^{d}\|f\|_{2}.$$
\end{lemma}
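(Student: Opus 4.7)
The plan is to deduce the operator-norm bound from the hypothesized matrix-coefficient inequality on the cone $\mathcal{F}\cap\mathcal{H}^{+}$ via three standard reductions, each crucially exploiting the positivity of $\pi$: pass from complex $f$ to positive $f$, from arbitrary vectors to positive-cone vectors, and then from $\mathcal{F}\cap\mathcal{H}^{+}$ to $\mathcal{H}^{+}$ by density.

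First I would split a finitely supported $f:\Gamma\to\mathbb{C}$ on $S^{\Gamma}_{n,R}$ as $f=(f_{1}-f_{2})+i(f_{3}-f_{4})$ with each $f_{j}\geq 0$ and $\|f_{j}\|_{2}\leq\|f\|_{2}$; the triangle inequality then gives $\|\pi(f)\|_{op}\leq\sum_{j=1}^{4}\|\pi(f_{j})\|_{op}$, reducing matters to $f\geq 0$. For such $f$, positivity of $\pi$ makes $\pi(f)=\sum_{\gamma}f(\gamma)\pi(\gamma)$ a positive operator, being a positive linear combination of positivity-preserving operators. Next I would decompose arbitrary unit vectors $v,w\in\mathcal{H}$ into four positive-cone pieces each, $v=v_{1}-v_{2}+i(v_{3}-v_{4})$ with $v_{k}\in\mathcal{H}^{+}$ and $\|v_{k}\|_{2}\leq\|v\|_{2}$, and similarly for $w$. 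Each matrix coefficient $\langle\pi(f)v_{k},w_{\ell}\rangle$ is then a nonnegative real number, so
\[
|\langle\pi(f)v,w\rangle|\;\leq\;\sum_{k,\ell=1}^{4}\langle\pi(f)v_{k},w_{\ell}\rangle,
\]
and it suffices to bound each summand with $v_{k},w_{\ell}\in\mathcal{H}^{+}$.

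The third step will be to extend the hypothesized inequality from $\mathcal{F}\cap\mathcal{H}^{+}$ to $\mathcal{H}^{+}$: for a fixed finitely supported $f$, the bilinear form $(v,w)\mapsto\langle\pi(f)v,w\rangle$ is jointly $L^{2}$-continuous, as is the right-hand side of the hypothesis, so density of $\mathcal{F}\cap\mathcal{H}^{+}$ in $\mathcal{H}^{+}$ propagates the bound. Combining the three reductions then yields, for every $n\geq n_{0}$ and every $f$ finitely supported on $S^{\Gamma}_{n,R}$, an inequality of the form $\|\pi(f)\|_{op}\leq 64\,C\,(1+\omega_{\sigma}(nR))^{d}\|f\|_{2}$ (the factor $64$ coming from the $4\times 4\times 4$ summations above, and absorbable into the final constant). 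The finitely many residual values $n<n_{0}$ concern only boundedly many elements per sphere under the proper action, so they can be handled by crude Cauchy--Schwarz and absorbed into the final constant.

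The main obstacle I anticipate is density of $\mathcal{F}\cap\mathcal{H}^{+}$ in $\mathcal{H}^{+}$, since density of $\mathcal{F}$ in $\mathcal{H}$ does not automatically imply density of the corresponding positive cones. The dense subspace constructed in Section \ref{sec5} should be tailored precisely to ensure stability (or at least approximability) under the positive-part map; granting this, the rest of the argument is a careful bookkeeping that uses positivity of $\pi$ in an essential way to replace \emph{absolute values} of matrix coefficients by the coefficients themselves.
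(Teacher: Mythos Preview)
Your argument is correct and is the standard reduction the author has in mind; the paper in fact does not supply a proof, writing only ``The proof is easy and left to the reader.''

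One remark on the point you flag as the main obstacle: in the present setting the density of $\mathcal{F}\cap\mathcal{H}^{+}$ in $\mathcal{H}^{+}$ is automatic from the construction in Section~\ref{sec5}. The approximating projection
\[
p_{N}(v)=\sum_{g\in S^{\Lambda*}_{N,R}}\frac{1}{\nu_{o}(Q_{g})}\langle v,\mathbf{1}_{Q_{g}}\rangle\,\mathbf{1}_{Q_{g}}
\]
is a conditional-expectation operator and manifestly preserves the positive cone, so if $v\in\mathcal{H}^{+}$ then $p_{N}(v)\in E^{\Lambda}_{N}(\partial X)\cap\mathcal{H}^{+}\subset\mathcal{F}\cap\mathcal{H}^{+}$ and $\|v-p_{N}(v)\|_{2}\to 0$ by Proposition~\ref{density}. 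Your three-step reduction then goes through exactly as you describe; the handling of the finitely many spheres with $n<n_{0}$ by a crude bound (the union $\bigcup_{n<n_{0}}S^{\Gamma}_{n,R}$ is finite by properness of the action) is also fine.
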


The proof is easy and left to the reader.

\section{A Dense subset of $L^{2}(\partial X,\nu_{o})$}\label{sec5}
\subsection{Notation}
Let $(X,d)$ be a proper roughly geodesic, $\epsilon$-good, $\delta$-hyperbolic space. We assume that there exists a discrete group of isometries of $(X,d)$
denoted by $\Lambda$  acting properly and cocompactly on $(X,d)$ (a priori different from a group $\Gamma$  for which we shall establish property RD). The elements of $\Lambda$ will be denoted by $g\in \Lambda $ or $h\in \Lambda$.
Let $\nu=(\nu_{x})_{x\in X}$ be the Patterson-Sullivan measure associated with $\Lambda$ of dimension $\alpha$. We denote by $\rho$ the diameter of a fundamental domain of the action of $\Lambda$ on $(X,d)$ so that one can write $\cup_{g\in \Lambda} B(go, \rho)=X$, where $o$ is a basepoint in $X$. 

In this section, we provide the construction of a sequence of finite dimensional  subspaces  of $L^{2}(\partial X,\nu_{o})$, denoted by $E^{\Lambda}_{N}(\partial X)$, depending strongly on $\Lambda$ such that for all $v\in L^{2}(\partial X,\nu_{o})$ there is a sequence of $v_{N}\in E^{\Lambda}_{N}(\partial X)$ satisfying $\|v-v_N\|_{2}\to 0$ as $N\to +\infty$.\\
Thus, the dense subset we shall consider is defined as 
\begin{align}\label{densesub}
\mathcal{F}:= \cup_{N\geq 0} E^{\Lambda}_{N}(\partial X).
\end{align}

\subsection{Covering, multiplicity, shadows}
The following lemma is very useful. We shall mention that it has been already used in several situations, see for example \cite[Corollary 4.2]{BM} and \cite[Lemma 4.2]{Ga}. This is the cornerstone to construct a suitable sequence of subspaces on the boundary.\\
We consider the spheres $S^{\Lambda}_{N,R}$ of $\Lambda$ for all non-negative integers $N$ and for $R>0$. 
\begin{lemma}\label{fondam} There exist $R,r>0$ so that for all $N$ large enough we have

\begin{enumerate}
\item $\cup_{g\in S^{\Lambda}_{N,R}}O_{r}(o,go)=\partial X.$ 
\item There exists an integer $\frak{m}$ such that for all $N$ and for all $g\in S^{\Lambda}_{N,R}$ the cardinal of the set $\{h\in S^{\Lambda}_{N,R}| O_{r}(o,ho)\cap O_{r}(o,go)\neq \varnothing \}$ is bounded by $\frak{m}$.
\end{enumerate}
\end{lemma}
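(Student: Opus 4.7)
The plan is to prove the two statements in parallel, with part (1) relying on cocompactness of $\Lambda$ to approximate roughly geodesic rays by orbit points, and part (2) on hyperbolicity plus properness of the action.

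For part (1), I would fix $\xi \in \partial X$ and a roughly geodesic ray $r:[0,+\infty)\to X$ with $r(+\infty)=\xi$. Pick the point $y := r(NR + R/2)$, which lies at distance approximately $NR+R/2$ from $o$ (up to additive $C_X$). By cocompactness there exists $g\in \Lambda$ with $d(y,go)\leq \rho$. Choosing $R > 2\rho + 4C_X$ forces $NR \leq d(o,go) < (N+1)R$, so $g \in S^{\Lambda}_{N,R}$. To show $\xi \in O_r(o,go)$, use that $y$ is on a roughly geodesic to $\xi$, which gives a lower bound $(y,\xi)_o \geq d(o,y) - C_1$ for some constant $C_1 = C_1(C_X)$ (this is proved similarly to Lemma~\ref{ineqzkend}). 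Combining with $d(y,go)\leq \rho$ via the $\delta$-hyperbolicity inequality, one gets $(go,\xi)_o \geq \min\{(go,y)_o, (y,\xi)_o\} - \delta \geq d(o,go) - C_2$ for a constant $C_2 = C_2(\rho, C_X, \delta)$. Choosing $r > C_2$ completes the argument.

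For part (2), suppose $g,h \in S^{\Lambda}_{N,R}$ and there exists $\xi \in O_r(o,go)\cap O_r(o,ho)$. Then $(go,\xi)_o \geq d(o,go)-r \geq NR-r$ and similarly for $h$. By hyperbolicity, $(go,ho)_o \geq \min\{(go,\xi)_o,(\xi,ho)_o\}-2\delta \geq NR-r-2\delta$. Rewriting the Gromov product,
\begin{equation*}
d(go,ho) = d(o,go)+d(o,ho)-2(go,ho)_o \leq 2(N+1)R - 2(NR-r-2\delta) = 2R+2r+4\delta.
\end{equation*}
Hence $d(o,g^{-1}ho)\leq 2R+2r+4\delta$. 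Properness of the action of $\Lambda$ implies that the number of group elements in a ball of fixed radius centered at $o$ is finite, and this bound is independent of $g$ and $N$; call this bound $\mathfrak{m}$.

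The main subtlety is ensuring that $g$ (obtained from the approximation in part (1)) lands in precisely $S^{\Lambda}_{N,R}$ and not a neighboring sphere. This is handled by picking $y$ near the middle of the sphere (at level $NR+R/2$) and taking $R$ large enough that the approximation error $\rho + O(C_X)$ is strictly smaller than $R/2$. Once $R$ is fixed to satisfy both this constraint and the constraints of earlier lemmas (so that Lemma~\ref{ahlforsAk} and the discretization results hold), the constants $r$ and $\mathfrak{m}$ follow automatically from $C_X,\delta,\rho,M$ and the growth of $\Lambda$.
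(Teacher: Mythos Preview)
Your proposal is correct and follows essentially the same approach as the paper: for (1) you approximate a point on a roughly geodesic ray to $\xi$ at level roughly $NR+R/2$ by an orbit point $go$ using cocompactness, then use $\delta$-hyperbolicity to bound $(\xi,go)_o$ from below; for (2) you bound $d(go,ho)$ uniformly via the Gromov product and invoke properness. The only cosmetic differences are your choice of $\tau=R/2$ (the paper uses a generic $\tau\in(C_X+\rho,\,R-C_X-\rho)$) and your use of $2\delta$ in the hyperbolic inequality on the bordification (the paper uses $\delta$, which is justified here since the Gromov product extends continuously in an $\epsilon$-good space).
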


\begin{proof}
We prove Item (1): Let $R$ be larger than $2(C_{X}+\rho)$, where the constant $C_{X}$ comes from the roughly geodesics and let $C_{X}+\rho<\tau<R-C_{X}-\rho$. Consider $\xi \in \partial X$ and let a $r_{o}$ be a roughly isometry representing a roughly geodesic starting at $o$ and ending at $\xi$. Consider the point $x=r_{o}(NR+\tau)$ thus satisfying $NR+\tau-C_{X} \leq d(o,x)\leq NR+\tau+C_{X}$.  Since $\Lambda$ acts cocompactly on $X$, there exists $g\in \Lambda$ so that $d(go,x)\leq \rho$.\\
Then $NR+\tau-C_{X}-\rho<d(go,o)\leq NR+\tau+C_{X}+\rho.$ The choice of $\tau$ ensures that $g\in S^{\Lambda}_{N,R}$. 
Observe that 
\begin{align*}
 (\xi,x)_{o}&=\lim_{n\to +\infty}(r_{o}(n),r_{o}(NR+\tau ))_{o}\\
 &= \lim_{n\to +\infty} \frac{1}{2}( d(r_{o}(n),o)+d(o,r_{o}(NR+\tau ))- d(r_{o}(n),r_{o}(NR+\tau )) )\\
 &\geq NR+\tau +C+\rho-\frac{3}{2}C_{X}-C_{X}-\rho\\
 &\geq d(o,g o)-\frac{1}{2}C_{X}-\rho.
\end{align*}

It follows that:
\begin{align*}
(\xi,go)_{o}&\geq \min{ \{ (\xi,x)_{o},(x,g o)_{o} \} }-\delta\\
&\geq \min{ \{d(o,g o)-\frac{1}{2}C_{X}-\rho,\frac{1}{2}(d(o,x)+d(o,go)-d(x,go)) \} }-\delta\\
&\geq  \min{ \{ d(o,g o)-\frac{1}{2}C_{X}-\rho,d(o,go)-C_{X}-\rho \} }-\delta\\
&\geq d(o,go)-C_{X}-\rho-\delta.
\end{align*}
We set $r:=C_{X}+\rho+\delta.$ Hence $\xi \in O_{r}(o,go)$.\\

We prove Item (2): Fix now some $g\in S^{\Lambda}_{N,R}$. Let $h\in S^{\Lambda}_{N,R}$ such that $O_{r}(o,go)\cap O_{r}(o,ho)\neq \varnothing$. Pick $\eta\in O_{r}(o,go)\cap O_{r}(o,ho)$ and so $$(go,ho)_{o}\geq \min \{(go,\eta)_{o},(\eta,ho)_{o}\}-\delta\geq NR-r-\delta.$$
Hence, $$d(go,ho)=d(o,go)+d(o,ho)-2(go,ho)_{o}\leq 2(N+1)R-2NR+2r+2\delta=2(R+r+\delta).$$ 
Set $K:=2(R+r+\delta)$. Thus $\{h\in S^{\Lambda}_{N,R}| O_{r}(o,ho)\cap O_{r}(o,go)\neq \varnothing \}\subset B_{X}(go,K)$. Then $$|\{h\in S^{\Lambda}_{N,R}| O_{r}(o,ho)\cap O_{r}(o,go)\neq \varnothing \}|\leq |\Lambda \cap B_{X}(o,K)|.$$
Set $\frak{m}=|\Lambda\cap B_{X}(o,K)|$ to finish the proof, since $\Lambda$ is discrete.
\end{proof}

It is worth noting that, using Ahlfors regularity of the measure $\nu_{o}$ we deduce 

from Item (1) of Lemma \ref{fondam}: 
\begin{equation}
 |S^{\Lambda}_{N,R}|\leq C_{\nu} e^{\alpha(r+\delta)}\times  e^{\alpha NR}.
\end{equation}


\subsection{Vitali's covering type Lemma}
We use Lemma \ref{fondam} to construct a very useful sequence of finite dimensional subspaces of $L^{2}(\partial X,\nu_{o})$. But before giving the definition of these subspaces we shall use a Vitali's covering argument for the construction. 
\begin{lemma}\label{vitali}
Let $R$ and $r$ as in Lemma \ref{fondam}. For all non-negative integers $N$ large enough, there exists a non empty set $S^{\Lambda*}_{N,R}\subset S^{\Lambda}_{N,R}$ and Borel subsets $\big(Q_{r}(o, g o )\big)_{g\in S^{\Lambda*}_{N,R}}$ of the boundary such that 
\begin{enumerate}
\item $\partial X = \coprod_{g\in S^{\Lambda *}_{N,R}}Q_{r}(o, go) $.,\\
\item $Q_{r}(o, go)\cap Q_{r}(o, ho)=\varnothing$ with $g\neq h\in S^{\Lambda *}_{N,R}$, \\
\item $O_{r}(o, go)\subset Q_{r}(o, go)\subset O_{r'}(o, go) $ for some $r'>r$ for all $g\in S^{\Lambda*}_{N,R}$,\\
\item There exists a constant $C>0$ such that for all integers $N$ we have:
$$C^{-1}e^{\alpha NR} \leq |S^{\Lambda*}_{N,R}|\leq Ce^{\alpha NR}.$$
\end{enumerate}
\end{lemma}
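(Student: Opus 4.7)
The plan is to combine a Vitali-type maximal selection on the shadow covering provided by Lemma~\ref{fondam} with a set-theoretic bookkeeping argument that refines the resulting disjoint subfamily into an actual partition $\{Q_r(o,g^{*}o)\}$ of $\partial X$ sandwiching each selected shadow, and then to read off the cardinality bound from the Ahlfors regularity of $\nu_{o}$.

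First I would pick $S^{\Lambda *}_{N,R}\subset S^{\Lambda}_{N,R}$ to be a maximal subset for which the shadows $\{O_{r}(o,g^{*}o)\}_{g^{*}\in S^{\Lambda *}_{N,R}}$ are pairwise disjoint; such a set exists since $S^{\Lambda}_{N,R}$ is finite. Next, relying on the extended $\delta$-hyperbolicity inequality (item (4) of Proposition~\ref{propGromov}) together with the fact that every element of $S^{\Lambda}_{N,R}$ sits in an annulus of width $R$, I would show that a controlled enlargement $r':=r+R+4\delta$ suffices for the family $\{O_{r'}(o,g^{*}o)\}_{g^{*}\in S^{\Lambda *}_{N,R}}$ to cover $\partial X$. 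Indeed, maximality ensures that for every $g\in S^{\Lambda}_{N,R}$ there exists $g^{*}\in S^{\Lambda *}_{N,R}$ and a common boundary point $\xi\in O_{r}(o,go)\cap O_{r}(o,g^{*}o)$; a twofold application of the Gromov product inequality then forces $O_{r}(o,go)\subset O_{r'}(o,g^{*}o)$, and combining with Item (1) of Lemma~\ref{fondam} delivers coverage.

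To turn these data into a partition, I would enumerate $S^{\Lambda *}_{N,R}=\{g_{1}^{*},\dots,g_{m}^{*}\}$, set $P_{k}:=O_{r'}(o,g_{k}^{*}o)\setminus\bigcup_{j<k}O_{r'}(o,g_{j}^{*}o)$, and define
\[
Q_{r}(o,g_{k}^{*}o):=O_{r}(o,g_{k}^{*}o)\;\cup\;\Bigl(P_{k}\setminus\bigcup_{j\neq k}O_{r}(o,g_{j}^{*}o)\Bigr).
\]
A short case analysis, using disjointness of the $O_{r}(o,g_{j}^{*}o)$'s within $S^{\Lambda *}_{N,R}$ and of the $P_{k}$'s, should then confirm items (1)--(3). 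For item (4), Lemma~\ref{ombre} combined with the $\alpha$-Ahlfors regularity of $\nu_{o}$ supplies constants $c_{1},c_{2}>0$ with $c_{1}e^{-\alpha NR}\leq \nu_{o}(O_{r}(o,g^{*}o))$ and $\nu_{o}(O_{r'}(o,g^{*}o))\leq c_{2}e^{-\alpha NR}$ uniformly over $g^{*}$ (after enlarging $r$ so that $r$ still exceeds the shadow threshold $M+\delta$). Since the $Q_{r}$'s partition $\partial X$ and $O_{r}\subset Q_{r}\subset O_{r'}$, summing over $g^{*}$ yields $c_{1}|S^{\Lambda *}_{N,R}|e^{-\alpha NR}\leq 1\leq c_{2}|S^{\Lambda *}_{N,R}|e^{-\alpha NR}$, which is item (4).

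The main obstacle is the partition step: the $Q_{r}$'s must be simultaneously disjoint, exhaust $\partial X$, and remain trapped between $O_{r}$ and $O_{r'}$. The formula above---peeling $\partial X$ via the $P_{k}$'s while removing the union of the smaller $O_{r}$'s in order to preserve the lower inclusion---does the job, but the verification requires more care than the Vitali enlargement or the cardinality count, which are essentially routine consequences of Lemma~\ref{fondam} and Ahlfors regularity.
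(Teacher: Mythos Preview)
Your argument is correct and close in spirit to the paper's, but the implementation differs in a way worth noting. The paper first passes from shadows to visual-metric balls via Lemma~\ref{ombre}, setting $B_{g}:=B(\theta_{o}^{go},r_{N})$ with $r_{N}=e^{-\epsilon(NR-r-\delta)}$, and then invokes the classical Vitali covering lemma \emph{for balls} to extract $S^{\Lambda*}_{N,R}$ with the $B_{g}$'s disjoint and the $5$-fold dilates $B_{g}^{*}$ covering $\partial X$; the partition is then built by the Stein-type inductive formula $Q_{k}:=B_{g_{k}}^{*}\setminus\bigl(\bigcup_{j<k}Q_{j}\cup\bigcup_{j>k}B_{g_{j}}\bigr)$, and Item~(3) follows from the sandwich $O_{r}\subset B_{g}\subset Q_{r}\subset B_{g}^{*}\subset O_{r'}$. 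You instead run the Vitali selection directly on the shadows: the maximal-disjoint-subfamily step replaces the ball lemma, and the hyperbolic Gromov-product inequality replaces the metric dilation $B\mapsto 5B$ to show that the enlarged shadows $O_{r'}$ cover. Your partition formula is non-inductive and your $r'=r+R+4\delta$ is explicit. The trade-off: your route is more self-contained and stays intrinsic to the shadow language, while the paper's route is a cleaner appeal to an off-the-shelf lemma and makes the measure estimates for Item~(4) marginally more immediate since the $Q_{r}$'s are already trapped between genuine metric balls. The cardinality argument in Item~(4) is the same in both.
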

\begin{proof}
From Lemma \ref{fondam} we have $\cup_{g\in S^{\Lambda}_{N,R}}O_{r}(o,go)=\partial X$.
Lemma \ref{ombre} implies that $O_{r}(o,go)\subset B(\theta_{o}^{go},e^{-\epsilon(|g|-r-\delta)})$. Since $g\in S^{\Lambda}_{N,R}$, we have $O_{r}(o,go)\subset B(\theta_{o}^{go},e^{-\epsilon(NR-r-\delta)})$. Define the ball associated with $g\in S^{\Lambda}_{N,R}$ as $B_{g}:= B(\theta_{o}^{go},r_{N})$ with $r_{N}:=e^{-\epsilon(NR-r-\delta)}$. Hence $\cup_{g\in S^{\Lambda}_{N,R}}B_{g}=\partial X$. Denote by $B^{*}_{g}$ the ball $B(\theta_{o}^{go},5 r_{N})$. By Vitali's covering lemma there exists a set $S^{\Lambda*}_{N,R}\subset S^{\Lambda}_{N,R}$ so that 
\begin{equation}
\cup_{g\in S^{\Lambda *}_{N,R}}B^{*}_{g}=\partial X,
\end{equation} and such that 
\begin{equation}
B_{g}\cap B_{h}=\varnothing, \mbox{for  all } g\neq h\in S^{\Lambda*}_{N,R}.\end{equation} Therefore one can construct a family of Borel subsets $Q_{r}(o,g o)$ for $g\in S^{\Lambda*}_{N,R}$ such that Item $(1),(2)$ and $(3)$ of Lemma \ref{vitali} are satisfied (see for example \cite[Lemma 2, p15]{St}): enumerate the elements $g_{k}$ of $S^{\Lambda*}_{N,R}$ for $k\in I_{N}$ with index set $I_{N}:=\{1,\dots,|S^{\Lambda *}_{N,R}|\}$. Then consider $B_{g_{k}}$ for $k\in I_{N}$ and define by induction:
$$Q_{r}(o,g_{k} o):=B^{*}_{g_{k}}\cap \big(\partial X\backslash \cup_{j<k}Q_{r}(x,g_{j} x) \big) \cap \big(\partial X\backslash  \cup_{j>k} B_{g_{j}} \big).$$\\

For Item (4), observe first that there exists $C>0$ such that for all $g\in S^{\Lambda *}_{N,R}$ we have 
\begin{equation}\label{ahlfors}
C^{-1}e^{-\alpha NR }\leq \nu_{o}(Q_{r}(o, g o))\leq Ce^{-\alpha NR }.
\end{equation}

 Then, write $\partial X=\coprod_{g\in S^{\Lambda*}_{N,R}}Q_{g}$ as a disjoint union. By taking the measure of two members of this equality we obtain:
\begin{align*}\label{counting}
\nu_{o}(\partial X)&=\nu_{o}\bigg(\coprod_{ g\in S^{\Lambda*}_{N,R} }Q_{r}(o, g o) \bigg)=\sum_{g\in S^{\Lambda*}_{N,R} }\nu_{o}\big(Q_{r}(o, g o)\big),\\
\end{align*}
and Inequalities (\ref{ahlfors}) imply: 
\begin{equation}\label{countingsphere}
 C^{-1}e^{\alpha N R} \leq |S^{\Lambda*}_{N,R}|\leq C e^{\alpha N R}.
 \end{equation}

\end{proof}
In the following, we denote $Q_{r}(o, go)$ by $Q_{g}$, for $g\in S^{*\Lambda}_{N,R}$ 
\subsection{Construction of a dense subspace}

Define the finite dimensional subspaces of $E^{\Lambda}_{N}(\partial X)\subset L^{2}(\partial X,\nu_{o})$ as the subspaces generated by 
\begin{equation}\label{sousespace}
 E^{\Lambda}_{N}(\partial X):=\mbox{Span}\{ \textbf{1}_{Q_{g} }\mbox{ with } g\in S^{\Lambda*}_{N,R} \},
\end{equation}
where $\textbf{1}_{Q_{g} }:= \textbf{1}_{Q_{r}(o,g o)}.$\\

We have the following proposition:

\begin{prop}\label{density}
For all $v\in L^{2}(\partial X,\nu_{o})$ there exists a sequence $v_{N}\in E^{\Lambda}_{N}(\partial X,\mu_{x})$ so that $\|v-v_{N}\|_{2}\to 0$ as $N \to +\infty$.
\end{prop}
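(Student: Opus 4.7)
The plan is to view $v_N$ as a piecewise-constant approximation of $v$ adapted to the partition $\{Q_g\}_{g\in S^{\Lambda*}_{N,R}}$ of $\partial X$, and then to show that this partition becomes arbitrarily fine as $N\to+\infty$.

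Concretely, for any $v\in L^{2}(\partial X,\nu_{o})$, I would define
\begin{equation*}
v_{N}:=\sum_{g\in S^{\Lambda*}_{N,R}} c_{N,g}(v)\,\mathbf{1}_{Q_{g}},
\qquad c_{N,g}(v):=\frac{1}{\nu_{o}(Q_{g})}\int_{Q_{g}} v\,d\nu_{o}.
\end{equation*}
By Item (4) of Lemma \ref{vitali} combined with inequalities of Ahlfors type for $\nu_{o}$, each coefficient $c_{N,g}(v)$ is well defined, and $v_{N}\in E^{\Lambda}_{N}(\partial X)$. The map $v\mapsto v_{N}$ is the conditional expectation onto the $\sigma$-algebra generated by the partition $\{Q_{g}\}_{g\in S^{\Lambda*}_{N,R}}$, so it is a contraction on $L^{2}(\partial X,\nu_{o})$.

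The key geometric input is the shrinking of diameters. By Item (3) of Lemma \ref{vitali} we have $Q_{g}\subset O_{r'}(o,go)$, and Lemma \ref{ombre} then gives $Q_{g}\subset B(\theta^{go}_{o},e^{-\epsilon(NR-r'-\delta)})$. Hence
\begin{equation*}
\max_{g\in S^{\Lambda*}_{N,R}}\mathrm{diam}\,Q_{g}\leq 2e^{-\epsilon(NR-r'-\delta)}\xrightarrow[N\to+\infty]{}0.
\end{equation*}
For a continuous function $\varphi$ on the compact metric space $(\partial X,d_{o,\epsilon})$ this uniform shrinking, together with uniform continuity of $\varphi$, forces $\|\varphi-\varphi_{N}\|_{\infty}\to 0$, and therefore $\|\varphi-\varphi_{N}\|_{2}\to 0$.

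To conclude, given $v\in L^{2}(\partial X,\nu_{o})$ and $\varepsilon>0$, I would approximate $v$ in $L^{2}$ by a continuous function $\varphi$ with $\|v-\varphi\|_{2}<\varepsilon/3$ (which is possible since $C(\partial X)$ is dense in $L^{2}(\partial X,\nu_{o})$), then choose $N$ so that $\|\varphi-\varphi_{N}\|_{2}<\varepsilon/3$, and estimate
\begin{equation*}
\|v-v_{N}\|_{2}\leq \|v-\varphi\|_{2}+\|\varphi-\varphi_{N}\|_{2}+\|\varphi_{N}-v_{N}\|_{2}
\leq \|v-\varphi\|_{2}+\|\varphi-\varphi_{N}\|_{2}+\|\varphi-v\|_{2}<\varepsilon,
\end{equation*}
using the $L^{2}$-contractivity of the conditional expectation in the last term. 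The main (mild) obstacle is that the partitions $\{Q_{g}\}_{g\in S^{\Lambda*}_{N,R}}$ for different $N$ are not nested, so a direct martingale argument is not available; the shrinking-diameter plus continuous-function-density route bypasses this without needing refinement.
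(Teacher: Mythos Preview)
Your proof is correct and follows essentially the same approach as the paper: define $v_N$ as the conditional expectation onto the partition $\{Q_g\}_{g\in S^{\Lambda*}_{N,R}}$, show the cells have diameters tending to zero, and deduce convergence via a dense subclass of regular functions. The only cosmetic differences are that the paper uses Lipschitz functions (getting an explicit bound $\|v-p_N(v)\|_2\le L\sup_g\mathrm{Diam}(Q_g)$) rather than continuous ones, and that you spell out the $\varepsilon/3$ step and the $L^2$-contractivity of the projection explicitly, which the paper leaves implicit.
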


\begin{proof}
First observe that the space of Lipschitz functions on the boundary is dense in $L^{2}(\partial X, \nu_{o})$. 

Then, define the projection $p_{N}$ for a non-negative integer $N$ as
\begin{equation}
p_{N}:v\in L^{2}(\partial X, \nu_{o})\mapsto \sum_{g\in S^{\Lambda}_{N,R}}\frac{1}{\nu_{o}(Q_{g})}\langle v,  \textbf{1}_{Q_{g}}\rangle \textbf{1}_{Q_{g}}\in E^{\Lambda}_{N}(\partial X).
\end{equation}
 If $v$ is a L-Lipschitz function, then $p_{N}(v)\in E^{\Lambda}_{N}(\partial X)$ for all $N$ and we have $\|v-v_{N}\|_{2}\to 0$ as $N \to +\infty$.

Indeed for $v$ a $L$-Lipschitz function on the boundary we have for all $ h\in S^{\Lambda*}_{N,R}$ and for all $\xi\in Q_{h}$: 
\begin{align*}
\bigg|v(\xi)\textbf{1}_{Q_{h}}(\xi)-\bigg(\frac{1}{\nu_{o}(Q_{h})}\int_{Q_{h}}v(\eta)d\nu_{o}(\eta)\bigg)\textbf{1}_{Q_{h}}(\xi)\bigg|&=\bigg|\frac{1}{\nu_{o}(Q_{h})}\int_{Q_{h}}v(\xi)-v(\eta)d\nu_{o}(\eta)\bigg|\\ 
& \leq \frac{1}{\nu_{o}(Q_{h})}\int_{Q_{h}}|v(\xi)-v(\eta)|d\nu_{o}(\eta)\\
& \leq \frac{1}{\nu_{o}(Q_{h})}\int_{Q_{h}}L |d_{o}(\xi)-d_{o}(\eta)|d\nu_{o}(\eta)\\
 &\leq L \sup_{h\in S^{\Lambda*}_{N,R}}\mbox{Diam}(Q_{h}).
\end{align*}
 Moreover for $v\in L^{2}(\partial X,\mu)$:
 \begin{align*}
 \| v-p_{N}(v)\|_{2}^{2} &=\int_{\partial X}\bigg[v(\xi)-\sum_{g \in S^{*  \Lambda}_{N,R}}\bigg(\frac{1}{\mu(Q_{g})}\int_{Q_{g}}v (\eta)d\mu(\eta)\bigg)\textbf{1}_{Q_{g}}(\xi)\bigg]^{2}d\nu_{o}(\xi)\\
 &=\sum_{h\in S^{\Lambda*}_{N,R}}\int_{\partial X}\bigg[v(\xi)-\sum_{g}\bigg(\frac{1}{\nu_{o}(Q_{g})}\int_{Q_{g}}v(\eta)d\nu_{o}(\eta)\bigg) \textbf{1}_{Q_{g}}(\xi)\bigg]^{2}\textbf{1}_{Q_{h}}(\xi)d\nu_{o}(\xi)\\
  &=\sum_{h \in S^{*  \Lambda}_{N,R}}\int_{Q_{h}}\bigg[v(\xi) \textbf{1}_{Q_h}(\xi)-\bigg(\frac{1}{\nu_{o}(Q_{h})}\int_{Q_{h}}v(\eta)d\mu(\eta)\bigg) \textbf{1}_{Q_h}(\xi)\bigg]^{2}d\nu_{o}(\xi)\\
 &\leq \big(L \sup_{h\in S^{*  \Lambda}_{N,R} }\mbox{Diam}(Q_{h})\big)^{2}\sum_{h \in S^{*  \Lambda}_{N,R}}\int_{Q_{h}}\textbf{1}_{\partial X}d\nu_{o}(\xi)\\
 &=\big(L \sup_{h\in S^{*  \Lambda}_{N,R} }\mbox{Diam}(Q_{h})\big)^{2}\to 0,
 \end{align*}
 as $N$ goes to infinity, and the proof is done. Note that the inequality in the above computation follows from the previous inequality just above in the proof.

\end{proof}

In other words, the set $\mathcal{F}$ defined in (\ref{densesub}) is dense in $L^{2}(\partial X,\nu_{o})$.
\subsection{$L^{2}$-norm }
If $v\in E^{\Lambda}_{N}(\partial X)$, write $v=\sum_{g\in S^{\Lambda*}_{N,R}}d_{g}\textbf{1}_{Q_g}$ where $d_{g}$ are a priori complex numbers. Observe that the $L^{2}$-norm of $v$ is given by
\begin{equation*}
\|v\|_{2}:=\big(\sum_{g\in S^{\Lambda *}_{N,R}}|d_{g}|^{2}\nu_{o}(Q_{g}) \big)^{\frac{1}{2}}.
\end{equation*}
By Inequalities (\ref{ahlfors}), there exists a constant $C>0$ such that:
\begin{align}\label{norm2}
\frac{C ^{-1}}{|S^{\Lambda*}_{N,R}|^{\frac{1}{2}}}\big(\sum_{g\in S^{\Lambda*}_{N,R}}|d_{g}|^{2}\big)^{\frac{1}{2}} \leq  \|v\|_{2} \leq    \frac{C}{|S^{\Lambda*}_{N,R}|^{\frac{1}{2}}}  \big(\sum_{g\in S^{\Lambda *}_{N,R}}|d_{g}|^{2}\big)^{\frac{1}{2}}.
\end{align}

\section{Use of cocompacity}\label{sec6}

In this section, let $\Gamma$ be a discrete group of $(X,d)$ a proper roughly geodesic $\epsilon$-good $\delta$-hyperbolic space. We assume that $\Gamma$ acts properly and cocompactly on $(X,d)$. We fix a fundamental domain $\mathcal{D}$ for the action of $\Gamma $ on $(X,d)$, containing a base point $o$, relatively compact of diameter $\rho>0$. We shall think about $\Gamma$ as the group for which we want to prove property RD.
\subsection{Cocompacity implies  uniformly bounded multiplicity}

Fix $R>0$.
Let $n$ be a non-negative integer, $k$ another integer in $\{1,\dots,n\}$ and define for each pair of open balls $B(\xi,r_{N})\times B(\eta,r_{N})\subset \partial X \times \partial X$ centered at $\xi,\eta\in \partial X$ of radius $r_{N}=e^{-\epsilon RN}$ with $N>n$,    the set 
\begin{equation}
S^{\Gamma}_{k,n-k,N}(\xi,\eta):=\{ \gamma \in S^{\Gamma}_{n,R}| A_{k,R}(\gamma )\times \gamma ^{-1} A_{k,R}(\gamma  )\cap B(\xi,r_{N})\times B(\eta,r_{N})\neq \varnothing \}\subset S^{\Gamma}_{n,R}.
\end{equation}
The following definition is inspired by the work of Bader-Muchnik in \cite[Definition 4.1]{BM}.

\begin{defi} 
Let $N>n$ with $n\geq 1$  and $k\in \{1,\dots ,n\}$ where $N,n,k$ are non-negative integers.
We say that $(A_{k,R}(\gamma)\times \gamma^{-1} A_{k,R}(\gamma))_{\gamma \in S^{\Gamma}_{n,R}}$ is a $(k,n,N)$-sampling for $\partial X \times \partial X$ of multiplicity $\frak{m}\in \mathbb{N}$, if for all $\xi,\eta\in \partial X \times \partial X$
we have 
$$ |S^{\Gamma}_{k,n-k,N}(\xi,\eta)|\leq \frak{m}.$$
\end{defi}

The fundamental result to prove property RD, as well as the other spectral inequalities dealing with $\pi_{s}$ for $s\neq\frac{1}{2}$, is the following proposition providing an uniform sampling:

\begin{prop}\label{mgeneral} If $\Gamma$ acts cocompactly then there exists an integer $\frak{m}$ such that for all integers $N>n\geq 1$ and for all integers $k\in \{ 1,\dots,n\}$ we have  that $(A_{k,R}(\gamma)\times \gamma^{-1} A_{k,R}(\gamma))_{\gamma \in S^{\Gamma}_{n,R}}$ is a $(k,n,N)$-sampling for $\partial X \times \partial X$ of multiplicity $\frak{m}\in \mathbb{N}$.
\end{prop}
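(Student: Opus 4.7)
The strategy is to show that any $\gamma \in S^{\Gamma}_{k,n-k,N}(\xi,\eta)$ is essentially determined, up to boundedly many choices, by a pair $(g,h) \in \Gamma \times \Gamma$ of ``approximating'' group elements, where $go$ lies near the discretized point $\gamma_k$ (defined in Section~\ref{sec3}) and $ho$ lies near $(\gamma^{-1})_{n-k}$. The heart of the matter is a geometric identity saying that $\gamma\cdot(\gamma^{-1})_{n-k}$ and $\gamma_k$ are within bounded distance of each other, which forces $g^{-1}\gamma h$ to live in a bounded (hence finite) subset of $\Gamma$.

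First I would localize $\gamma_k$ near any fixed rough geodesic $r_\xi$ to $\xi$. Fix $\xi'\in A_{k,R}(\gamma)\cap B(\xi,r_N)$. Since $(\partial X, d_{o,\epsilon})$ is $\epsilon$-good we have $d_{o,\epsilon}(\xi,\xi')=e^{-\epsilon(\xi,\xi')_o}<r_N=e^{-\epsilon NR}$, so $(\xi,\xi')_o>NR\geq kR$. Combining with Lemma~\ref{roughlygeo2} (which gives $(\xi',\gamma_k)_o\geq kR-C$) and Proposition~\ref{propGromov}(4) yields $(\xi,\gamma_k)_o\geq kR-C-2\delta$. Together with $d(o,\gamma_k)\leq kR+3C_X$, this implies that $\gamma_k$ is within a universal distance $K_1$ of $r_\xi(kR+\tau)$ (two points at roughly the same distance from $o$ with a Gromov product at $o$ equal to that distance must be close). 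The same reasoning, applied to $\gamma^{-1}\in S^{\Gamma}_{n,R}$, to $\eta'\in\gamma^{-1}A_{k,R}(\gamma)\cap B(\eta,r_N)$, and to Lemma~\ref{lesAi} (which places $\eta'$ in $A_{n-k-1,R}(\gamma^{-1})\cup A_{n-k,R}(\gamma^{-1})$), shows that $y:=(\gamma^{-1})_{n-k}$ (or $(\gamma^{-1})_{n-k-1}$) lies within $K_1$ of $r_\eta((n-k)R+\tau)$.

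Next I would prove the key geometric identity: $d(\gamma_k,\gamma y)\leq K_2$ for a universal $K_2$. Indeed $(\gamma o,\gamma_k)_o\geq kR+\tfrac{1}{2}C_X-\delta$ by Gromov's inequality applied to $\{\gamma o,\theta^{\gamma o}_o,\gamma_k\}$, using $(\theta^{\gamma o}_o,\gamma o)_o\geq |\gamma|-M$ and \eqref{zktheta}. On the other side, $d(o,\gamma y)=d(\gamma^{-1}o,y)$ is close to $kR$ because $y$ lies near the rough geodesic from $o$ to $\gamma^{-1}o$ (Lemma~\ref{controlzkxx} applied to $\gamma^{-1}$), and therefore $(\gamma y,\gamma o)_o\approx kR$ by direct computation. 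Another application of Gromov's inequality gives $(\gamma_k,\gamma y)_o\geq kR-O(1)$, whence $d(\gamma_k,\gamma y)=d(o,\gamma_k)+d(o,\gamma y)-2(\gamma_k,\gamma y)_o=O(1)$. This is the most technical step and in my view the main obstacle, since it requires combining four separate Gromov-product estimates and using crucially that $\gamma$ is an isometry.

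Finally I would conclude by cocompactness. Choose $g,h\in\Gamma$ with $d(go,\gamma_k)\leq\rho$ and $d(ho,y)\leq\rho$. By Step~1, $d(go,r_\xi(kR+\tau))\leq K_1+\rho$, and by properness of the action the cardinality of $\Gamma o\cap B_X(r_\xi(kR+\tau),K_1+\rho)$ is bounded by a constant $M_1$ independent of $\xi,n,k$; likewise there are at most $M_2$ choices for $h$. From the key identity, $d(go,\gamma ho)\leq 2\rho+K_2$, i.e.\ $g^{-1}\gamma h\in\Gamma\cap B_X(o,2\rho+K_2)$, a finite set of cardinality $M_3$ by discreteness. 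Writing $\gamma=g\cdot(g^{-1}\gamma h)\cdot h^{-1}$, we get $|S^{\Gamma}_{k,n-k,N}(\xi,\eta)|\leq M_1M_2M_3=:\mathfrak{m}$, a bound depending only on $R,\delta,C_X,M,\rho$ and the group $\Gamma$, and in particular independent of $\xi,\eta,n,k,N$.
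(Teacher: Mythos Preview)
Your proof is correct and follows essentially the same strategy as the paper's. The paper also proves that $\gamma_k$ lies within bounded distance of $\xi_k=r_\xi(kR+\tau)$ (its Claim~1) and that $\gamma^{-1}\gamma_k$ lies within bounded distance of $\eta_{n-k}$ (its Claim~2), and then concludes by proper discontinuity after translating both balls to a fundamental domain via cocompactness. The only organizational difference is that you split the paper's Claim~2 into two pieces: the purely internal identity $d(\gamma_k,\gamma\cdot(\gamma^{-1})_{n-k})=O(1)$ together with the localization $d((\gamma^{-1})_{n-k},\eta_{n-k})\leq K_1$ obtained by symmetry from Claim~1, whereas the paper argues directly with $\eta_{n-k}$. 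Your final count via the factorization $\gamma=g\cdot(g^{-1}\gamma h)\cdot h^{-1}$ is a repackaging of the paper's bi-invariance argument; both yield the same uniform bound depending only on $R,\delta,C_X,M,\rho$.
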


\begin{proof}
We proceed in two steps.

\textbf{Step 1:} 
There exist two positive real numbers $\rho_{1},\rho_{2} >0$ such that for all $N>n$ and $k$ be integer in $\{1,\dots,n\}$, and for all $\xi,\eta \in \partial X$, if $\gamma\in S^{\Gamma}_{k,n-k,N}(\xi,\eta)$ we have $$\gamma^{-1} B_{X}(\xi_{k},\rho_{1})\cap B_{X}(\eta_{n-k},\rho_{2})\neq \varnothing.$$

\begin{proof}[Proof of Step 1]

Let $\gamma \in S^{\Gamma}_{n,R}$ such that $A_{k,R}(\gamma)\times  \gamma ^{-1}A_{k,R}(\gamma) \cap  B(\xi,r_{N})\times B(\eta,r_{N})  \neq \varnothing$. Consider the point $\xi_{k}\in X$ satisfying $ kR \leq d(x,\xi_{k})\leq (k+1)R$ and the point $\eta_{n-k}\in X$ satisfying $ (n-k)R \leq d(x,\eta_{n-k})\leq (n-k+1)R$.\\
Then, consider $\gamma_{k}$ in $S^{\Gamma}_{k,R}$ (recall the definition \ref{zkg}).\\

\textbf{Claim 1: There exists $\rho_{1}>0$ so that $d(\gamma_{k},\xi_{k})\leq \rho_{1}$. }\\
\begin{proof}[Proof of Claim 1]
Write $d(\gamma_{k},\xi_{k})=d(o,\gamma_{k})+d(o,\xi_{k})-2(\gamma_{k},\xi_{k})_{o}$. We shall find a lower bound for the term $(\gamma_{k},\xi_{k})_{o}$.
 
Observe that Lemma \ref{roughlygeo1} implies there exists $C>0$ such that for all $k\in \{1,\dots,n\}$
\begin{align*}
(\gamma_{k},\xi_{k})_{o}&\geq \min\{(\gamma_{k},\gamma o)_{o},(\gamma o,\xi_{k})_{o}\}-\delta\\
&\geq \min\{kR-C,(\gamma o,\xi_{k})_{o}\}-\delta.\\
\end{align*}

Pick now $\xi'\in A_{k,R}(\gamma)\cap B(\xi,r_{N})$. Then several uses of the hyperbolic inequality (\ref{hyp}) leads to:
\begin{align*}
(\gamma o, \xi_{k})_{o}&\geq \min{\{ (\gamma o,\xi'_{k})_{o},(\xi'_{k},\xi_{k})_{o}\}}-\delta\\
&\geq \min{\{ (\gamma o,\xi'_{k})_{o},(\xi'_{k},\xi')_{o}, (\xi',\xi_{k})_{o}\}}-2\delta\\
&\geq \min{\{ (\gamma o,\xi'_{k})_{o},(\xi'_{k},\xi')_{o}, (\xi',\xi)_{o},(\xi,\xi_{k})_{o}\}}-3\delta\\
&\geq \min{\{ (\gamma o,\xi)_{o}, (\xi'_{k},\xi')_{o}, (\xi',\xi)_{o},(\xi,\xi_{k})_{o}, ,  (\xi, \xi'_{k})_{o}\}}-4\delta\\
&\geq \min{\{(\gamma o,\xi')_{o}, (\xi',\xi)_{o}, (\xi'_{k},\xi')_{o}, (\xi,\xi_{k})_{o} \}}-5\delta.
 \end{align*}
Since $\xi,\xi' \in  B(\xi,r_{N})$, we have $(\xi',\xi)_{o}\geq (N+1)R>nR$. Besides, Lemma \ref{ineqzkend} implies $$ kR+\frac{1}{2}C_{X}\leq (\xi'_{k},\xi')_{o},(\xi_{k},\xi)_{o}\leq kR+\frac{7}{2}C_{X}\leq (N+1)R<(\xi,\xi')_{o},$$ and since $\xi'\in A_{k,R}(\gamma)$ we have by definition $(\gamma o,\xi')_{o}\geq kR$. Finally, we obtain $$(\gamma o, \xi_{k})_{o}\geq kR-5\delta.$$

 Hence, 
 \begin{align}
 (\gamma_{k},\xi_{k})_{o}\geq kR-C-6\delta.
 \end{align}
So, since $d(\gamma_{k},\xi_{k}) )=d(o,\gamma_{k})+d(o,\xi_{k})-2(\gamma_{k},\xi_{k})_{o}$ we have, by (\ref{distancezk})
\begin{align*}
d(\gamma_{k},\xi_{k}) )& \leq  2kR+6C_{X} -2 (kR-C-6\delta)\\
&\leq 6C_{X}+2C+12\delta.
\end{align*}
Set $\rho_{1}:=6C_{X}+2C+12\delta$ to conclude the proof of Claim 1.\\

\end{proof}
\textbf{Claim 2: There exists $\rho_{2}>0$ so that $d(\gamma^{-1}\gamma_{k},\eta_{n-k})\leq \rho_{2}$. }\\

\begin{proof}[Proof of Claim 2]

 Lemma \ref{ineqzkend} implies: 

\begin{align*}
(\gamma^{-1}\gamma_{k},\eta_{n-k})_{o}&\geq \min{\{(\gamma^{-1}\gamma_{k},\eta))_{o},(\eta,\eta_{n-k})_{o}}\}-\delta\\
&\geq \min{\{(\gamma^{-1}\gamma_{k},\eta))_{o},(n-k)R+\frac{1}{2}C_{X} }\}-\delta.
\end{align*}
Observe  that Lemma \ref{controlzkxx} implies that there exists a constant $C>0:$
\begin{align}\label{ineqpass}
(\gamma^{-1}\gamma_{k},\gamma^{-1} o)_{o}&=\frac{1}{2}(d(\gamma o,\gamma_{k})+d(o,\gamma^{-1} o)-d(\gamma_{k} ,o) )\\
&\geq (n-k)R-\frac{1}{2}(R+C).
\end{align}
Pick now $\eta'\in \gamma^{-1}A_{k,R}(\gamma )\cap B(\eta,r_{N})$. Then Lemma \ref{lesAi} implies that $\eta'\in A_{n-k,R}(\gamma^{-1} )\cup  A_{n-k-1,R}(\gamma^{-1} )\cap B(\eta,r_{N})$.

 We have 
\begin{align*}
(\gamma^{-1}\gamma_{k},\eta)_{o}&\geq \min\{(\gamma^{-1}\gamma_{k},\eta')_{o},(\eta',\eta)_{o} \}-\delta \\
&\geq \min\{(\gamma^{-1}\gamma_{k},\gamma^{-1} o)_{o},(\gamma^{-1},\eta')_{o},(\eta',\eta)_{o} \}-2\delta \\
& \geq \min\{(\gamma^{-1}\gamma_{k},\gamma^{-1} o)_{o},(\gamma^{-1},\eta')_{o} ,(\eta',\eta)_{o}\}-2\delta\\
& \geq\min\{(n-k)R-\frac{1}{2}(R+C),(n-1-k)R\}-2\delta,
\end{align*}
where the last inequality follows from the definition of $A_{n-k,R}(\gamma)$, the inequality (\ref{ineqpass}), and the fact that $(\eta,\eta')_{o}\geq NR>(n-k)R$.

 Therefore there exists a constant $C'>0$ such that
$$ (\gamma^{-1}\gamma_{k},\eta_{n-k})_{o}\geq (n-k)R-C'.$$

Lemma \ref{controlzkxx} provides a constant $C>0$ for the first term of the following right hand side equality such that: 
\begin{align*}
d(\gamma^{-1}\gamma_{k},\eta_{n-k})&=d(\gamma^{-1}\gamma_{k},o)+d(o,\eta_{n-k})-2(\gamma^{-1}\gamma_{k},\eta_{n-k})_{o}\\
&\leq (n-k)R+C+(n+1-k)R-2 (n-k)R +2C'\\
&=R+C+2C'.
\end{align*}
Set $\rho_{2}:=R+C+2C'$ to finish the proof.

\end{proof}
This achieves the proof of Step 1.
\end{proof}

\textbf{Step 2:} 
Step 1 implies that there exists $\rho_{1}>0$ and $\rho_{2}>0$ such that for each pair $(\xi,\eta)\in \partial X \times \partial X$ and for all $N>n$ and for all $k\in\{1,\dots,n \}$   we have: $$ S^{\Gamma}_{n,n-k,N}(\xi,\eta)\subset \{\gamma \in S^{\Gamma}_{n,R} | \gamma^{-1} B_{X}(\xi_{k}, \rho_{1}) \cap B_{X}(\eta_{n-k}, \rho_{2})\neq \varnothing\}.$$

Since the action $\Gamma \curvearrowright X$ is cocompact, one can find two elements $x,y$ in a fundamental domain $\mathcal{D}$ containing $o$ of diameter $\rho$, and two elements $\gamma_{1},\gamma_{2}\in \Gamma$ such that $d(\xi_{k},  \gamma_{1}x)<\rho$ and $d(\eta_{n-k},\gamma_{2} y)<\rho$. Thus
\begin{align*}
 S^{\Gamma}_{n,n-k,N}(\xi,\eta)&\subset \{\gamma \in S^{\Gamma}_{n,R} | \gamma^{-1} B_{X}(\gamma_{1} x, \rho_{1}+\rho) \cap B_{X}(\gamma_{2} y, \rho_{2}+\rho)\neq \varnothing\}\\
\end{align*}
Since the counting measure is bi-invariant we have
\begin{align*}
|S^{\Gamma}_{n,n-k,N}(\xi,\eta)|\leq |\{\gamma \in S_{n,R}^{\Gamma} | \gamma^{-1}B_{X}( x, \rho_{1}+\rho) \cap B_{X}(y, \rho_{2}+\rho)\neq \varnothing\}|.
\end{align*}

Eventually, use the fact that the action of $\Gamma$ on $X$ is properly discontinuous to define the non-negative integer:
\begin{equation}\label{integerm}
\frak{m}:=|\{\gamma \in \Gamma | \gamma^{-1} B_{X}( x, \rho_{1}+\rho) \cap B_{X}(y, \rho_{2}+\rho)\neq \varnothing\}|.
\end{equation}
We obtain:
$$|S^{\Gamma}_{n,n-k,N}(\xi,\eta)|\leq \frak{m}.$$

\end{proof}

\subsection{Uniform boundedness on the group}
Let $\Lambda$ be a discrete group of isometries of $(X,d)$, a priori different from $\Gamma$. We assume that $\Lambda$ acts properly and cocompactly on $(X,d)$ so that the results of Section \ref{sec5} hold.   \\

Let $N>n\geq 1$ and $k\in\{1,\dots,n \}$ be non-negative integers. Consider the sets $(Q_{g})_{g\in S^{\Lambda,*}_{N,R}} $ given by Lemma \ref{vitali}. Using Lemma \ref{ombre} we have $Q_{g}\subset B(\theta_{o}(g o),5r_{N}) $
with $r_{N}:=e^{-\epsilon(NR-r-\delta)}$ for all $g\in S^{\Lambda,*}_{N,R}$. By choosing $R>0$ large enough i.e. so that \begin{equation}\label{condR} R>r+\delta+\frac{\log(5)}{\epsilon},\end{equation} we have 
\begin{equation}\label{choiceR}
Q_{g}\subset B(\theta_{o}(g o),e^{\epsilon (N-1)R}).
\end{equation}
\subsubsection{Partitions of $S^{\Lambda *}_{N,R}$}

Let $\gamma $ be in $S^{\Gamma}_{n,R}$ and define 
\begin{equation}\label{notationSn1}
S^{\Lambda}_{N,k}(\gamma):=\{ g\in S^{\Lambda*}_{N,R}| A_{k,R}(\gamma )\cap Q_{g}\neq \varnothing\}\subset S^{\Lambda *}_{N,R}.
\end{equation}
and 
\begin{equation}\label{notationSn2}
\widetilde{S}^{\Lambda}_{N,n-k}(\gamma^{-1}):=\{ h\in S^{\Lambda*}_{N,R}| A_{k,R}(\gamma ) \cap  \gamma Q_{h}\neq \varnothing\}\subset S^{\Lambda *}_{N,R}.
\end{equation}
Note that
$$ \widetilde{S}^{\Lambda}_{N,n-k}(\gamma^{-1})=\{ h\in S^{\Lambda*}_{N,R}| \gamma^{-1}A_{k,R}(\gamma ) \cap  Q_{h}\neq \varnothing\}.
$$
\begin{remark}
The notation  $\widetilde{S}^{\Lambda}_{N,n-k}(\gamma^{-1})$ is legitimate by Lemma \ref{lesAi} implying that: 
\begin{equation}
 \widetilde{S}^{\Lambda}_{N,n-k}(\gamma^{-1})\subset S^{\Lambda}_{N,n-k}(\gamma^{-1} )\cup  S^{\Lambda}_{N,n-k-1}(\gamma^{-1} )
\end{equation}
\end{remark}

 For each $\gamma_{0} \in S^{\Gamma}_{n,R}$ define the set 
\begin{equation}
S^{\Gamma}_{n,n-k,N}(\gamma_{0}):=\{\gamma \in S^{\Gamma}_{n,R}| S^{\Lambda}_{N,k}(\gamma)\times  \widetilde{S}^{\Lambda}_{N,n-k}(\gamma^{-1})\cap  S^{\Lambda}_{N,k}(\gamma_{0} )\times  \widetilde{S}^{\Lambda}_{N,n-k}(\gamma^{-1}_{0} ) \neq \varnothing \}\subset S^{\Gamma}_{n,R}.
\end{equation}
 A consequence of Proposition \ref{mgeneral} is the following:

\begin{prop}\label{mfinal} If $\Gamma$ acts cocompactly then there exists a non-negative integer $\frak{m}$ such that for all non-negative integers $N+1>n$ and for all integers $k\in \{1,\dots,n\}$ and for all $\gamma_{0} \in S^{\Gamma}_{n,R}$: $$|S^{\Gamma}_{k,n-k,N}(\gamma_{0})|\leq \frak{m}.$$
\end{prop}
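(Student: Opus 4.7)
The strategy is to show that for any $\gamma \in S^{\Gamma}_{k,n-k,N}(\gamma_0)$, the element $\mu := \gamma_0\gamma^{-1} \in \Gamma$ moves the \emph{fixed} point $u_0 := (\gamma_0)_k \in X$ (depending only on $\gamma_0$ and $k$) by a uniformly bounded distance. Since $\Gamma$ acts properly and cocompactly on $X$, the bi-invariance trick of Step~2 in the proof of Proposition~\ref{mgeneral} bounds $|\{\mu \in \Gamma : d(\mu u, u) \leq C\}|$ uniformly in $u \in X$ by some integer $\frak{m}$, and injectivity of $\gamma \mapsto \gamma_0\gamma^{-1}$ then yields $|S^{\Gamma}_{k,n-k,N}(\gamma_0)| \leq \frak{m}$. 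The advantage over a direct appeal to Proposition~\ref{mgeneral} is that one avoids having to control the pair $(g,h)$: different $\gamma$'s may select very different pairs, but all of them conspire to move the common point $u_0$ a bounded amount.

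To prove the bounded-movement claim, I extract witnesses of the nonempty intersection: $g,h \in S^{\Lambda*}_{N,R}$ together with $\xi \in A_{k,R}(\gamma) \cap Q_g$, $\xi_0 \in A_{k,R}(\gamma_0) \cap Q_g$, $\eta \in \gamma^{-1}A_{k,R}(\gamma) \cap Q_h$, $\eta_0 \in \gamma_0^{-1}A_{k,R}(\gamma_0) \cap Q_h$. Since $Q_g \subset B(\theta_o^{go}, 5r_N)$ by (\ref{choiceR}), the visual metric inequality (\ref{eq:visual-metric-def}) forces $(\xi,\xi_0)_o \geq NR - C$ and $(\eta,\eta_0)_o \geq NR - C$. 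Combined with $(\xi_0, \gamma_0 o)_o \geq kR$ (from $\xi_0 \in A_{k,R}(\gamma_0)$) and iterated 4-point inequalities, the roughly geodesics from $o$ to $\xi$ and from $o$ to $\theta_o^{\gamma_0 o}$ fellow-travel up to depth $\geq kR - O(1)$, so their $k$-th points satisfy $d(\xi_k, (\gamma_0)_k) \leq C'$. The symmetric argument yields $d(\eta_{n-k}, (\gamma_0^{-1})_{n-k}) \leq C'$, with $C, C'$ independent of $n,k,N,\gamma_0$.

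Finally, I apply Step~1 of the proof of Proposition~\ref{mgeneral} directly to $\gamma$: there exists $p \in X$ with $d(p,\xi_k) \leq \rho_1$ and $d(\gamma^{-1}p, \eta_{n-k}) \leq \rho_2$. The previous paragraph upgrades this to $d(p, u_0) \leq \rho_1 + C'$ and $d(\gamma^{-1}p, (\gamma_0^{-1})_{n-k}) \leq \rho_2 + C'$. A last 4-point estimate, using Lemma~\ref{controlzkxx} and Lemma~\ref{roughlygeo1}, shows that both $\gamma_0^{-1}u_0$ and $(\gamma_0^{-1})_{n-k}$ sit at distance $(n-k)R + O(1)$ from $o$ and have Gromov product $(n-k)R + O(1)$ with $\gamma_0^{-1}o$, hence $d(\gamma_0^{-1}u_0, (\gamma_0^{-1})_{n-k}) \leq C''$. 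Using that $\gamma_0\gamma^{-1}$ is an isometry, this chain of inequalities gives $d(\gamma_0\gamma^{-1}u_0, u_0) \leq \rho_1 + \rho_2 + 2C' + C''$, as required. The main obstacle is keeping all of these Morse-type constants uniform in the parameters $n,k,N,\gamma_0$ through the successive comparisons—particularly at the edge cases $k=1$ and $k=n$ where some Gromov-product estimates are nearly tight—but once this bookkeeping is done, properness and cocompactness of $\Gamma \curvearrowright X$ close the argument as in the first paragraph.
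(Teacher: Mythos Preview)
Your argument is correct and in spirit close to the paper's, but it is more careful on one point where the paper's written proof is loose. The paper extracts, for each $\gamma\in S^{\Gamma}_{k,n-k,N}(\gamma_{0})$, a witnessing pair $(g,h)\in S^{\Lambda*}_{N,R}\times S^{\Lambda*}_{N,R}$ and then asserts the inclusion
\[
S^{\Gamma}_{k,n-k,N}(\gamma_{0})\;\subset\;S^{\Gamma}_{k,n-k,N}\bigl(B(\theta_{o}^{go},e^{-\epsilon(N-1)R}),\,B(\theta_{o}^{ho},e^{-\epsilon(N-1)R})\bigr),
\]
after which Proposition~\ref{mgeneral} would finish the proof. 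As you note, this inclusion is not literally valid because $(g,h)$ depends on the particular $\gamma$: different elements of $S^{\Gamma}_{k,n-k,N}(\gamma_{0})$ can land in different balls, so the right-hand side is not a single set. Your fix---showing that the isometry $\gamma_{0}\gamma^{-1}$ moves the \emph{fixed} point $u_{0}=(\gamma_{0})_{k}$ by at most $\rho_{1}+\rho_{2}+2C'+C''$, and then invoking properness plus cocompactness exactly as in Step~2 of Proposition~\ref{mgeneral}---is the right way to close this gap. The key step, that $d\bigl(\gamma_{0}^{-1}(\gamma_{0})_{k},(\gamma_{0}^{-1})_{n-k}\bigr)$ is uniformly bounded, is indeed a straightforward Gromov-product computation from Lemma~\ref{controlzkxx} and Lemma~\ref{roughlygeo1}, as you indicate.

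One small simplification: you can bypass your middle paragraph (comparing $\xi_{k}$ with $(\gamma_{0})_{k}$ directly) by observing that, since both $\gamma$ and $\gamma_{0}$ lie in $S^{\Gamma}_{k,n-k,N-1}(\theta_{o}^{go},\theta_{o}^{ho})$ for the \emph{same} witnessing pair, Step~1 of Proposition~\ref{mgeneral} applied to each gives $d(\gamma_{k},\xi_{k})\le\rho_{1}$ and $d((\gamma_{0})_{k},\xi_{k})\le\rho_{1}$, hence $d(\gamma_{k},(\gamma_{0})_{k})\le 2\rho_{1}$; similarly $d(\gamma^{-1}\gamma_{k},\gamma_{0}^{-1}(\gamma_{0})_{k})\le 2\rho_{2}$. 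This yields the displacement bound for $\gamma_{0}\gamma^{-1}$ on $u_{0}$ with no need for the separate fellow-traveling estimate, and may clarify what the paper intended.
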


\begin{proof}
Let $k$ be an integer in $\{ 1,\dots,n\}$. Fix $\gamma_{0} \in S^{\Gamma}_{n,R}$ and consider $\gamma\in S^{\Gamma}_{n,R}$ so that  $S^{\Lambda}_{k,R}(\gamma )\times  \widetilde{S}^{\Lambda}_{n-k,R}(\gamma ^{-1} )\cap S^{\Lambda}_{k,R}(\gamma_{0} )\times  \widetilde{S}^{\Lambda}_{n-k,R}(\gamma^{-1}_{0} ) \neq \varnothing.$ By definition of these sets, there exist $g,h\in S^{\Lambda}_{N}$ so that 
\begin{equation}\label{inter1}
Q_{g}\cap  A_{k,R}(\gamma )\neq\varnothing \mbox{ and }  Q_{g}\cap A_{k,R}(\gamma_{0})\neq\varnothing,
\end{equation}
 as well as 
 \begin{equation}\label{inter2}
 Q_{h}\cap \gamma^{-1}A_{k,R}(\gamma) \neq\varnothing \mbox{ and } Q_{h}\cap \gamma_{0}^{-1}A_{k,R}(\gamma_{0} ) \neq\varnothing.
 \end{equation} 

 By the rigth hand inclusion of Item (3) of Proposition \ref{vitali}, thanks to Lemma \ref{ombre} and by the choice of $R$ in \ref{choiceR}, we have for  $N+1>n$ that $Q_{g}\subset B(\theta_{o}^{go},e^{-\epsilon NR})$ and $Q_{h}\subset B(\theta_{o}^{ho},e^{-\epsilon (N-1)R})$. Therefore 
\begin{align*}
S^{\Gamma}_{k,n-k,N}(\gamma_{0})\subset S^{\Gamma}_{k,n-k,N}(B(\theta_{o}^{go},e^{-\epsilon (N-1)R}),B(\theta_{o}^{ho},e^{-\epsilon (N-1)R})),
\end{align*}
and since $N+1>n$, Proposition \ref{mgeneral} finishes the proof.
\end{proof}

\section{Counting problem for $\Lambda$}\label{sec7}

\subsection{A counting estimate lemma}

Let $\Lambda$ be a discrete group of isometries of $(X,d)$ a proper roughly geodesic, $\epsilon$-good, $\delta$-hyperbolic space. Assume that $\Lambda$ acts properly and cocompactly. Let $\nu_{o}$ be the Patterson-Sullivan measure associate with a base point $o\in X$ of conformal dimension $\alpha$.\\
Let $R>0$. It turns out that the growth of $S^{\Lambda}_{N,R}$ behaves as $e^{\alpha NR}$. Given a Borel subset $U$ of the  boundary (with a frontier of measure zero), the number of elements of the sphere $S^{\Lambda}_{N,R}$  such that the shadows associated to them intersect $U$ are not empty, growths as $\nu_{o}(U)e^{\alpha NR}$. The following lemmas are \emph{uniform} refinement of these counting estimates rather than asymptotic estimates. We shall provide upper bound, with  uniform constants in $N$, of the number of elements of the sphere $S^{\Lambda}_{N,R}$  such that the shadows associated to them intersect $U$. But the counting estimates shall deal with the spheres $S^{\Lambda*}_{N,R}$ rather than the spheres $S^ {\Lambda}_{N,R}$. More precisely for a Borel subset $U\subset \partial X$ define the set
\begin{equation}\label{defU}
W^{\Lambda}_{N,R}(U):=\{ g\in S^{\Lambda*}_{N,R}|U \cap Q_{g}\neq \varnothing\}\subset  S^{\Lambda*}_{N,R}.
\end{equation}
We start first with a variation of the so-called \emph{Sullivan's shadow lemma}. We can find a proof of the following lemma in \cite[Lemma 4.1]{BM}. Since the statement is slightly different, we give a proof.

\begin{lemma}\label{countK}
Let $N$ be a non-negative integer and $R>0$. Let $U$ be a Borel subset of $\partial X$ such that $\mbox{Diam}(U)\leq e^{-\epsilon NR}.$
There exists a compact $K\subset X$  and a non-negative integer $q$ such that we have $\{g\cdot o|\, g\in W^{\Lambda}_{N,R}(U)\}\subset K$, and thus $|W^{\Lambda}_{N,R}(U)|\leq q.$
\end{lemma}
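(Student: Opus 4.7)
The plan is to reduce the statement to a uniform bound on the mutual distances $d(g_1 o, g_2 o)$ for $g_1, g_2 \in W^{\Lambda}_{N,R}(U)$ that does not depend on $N$ or $U$. If $W^{\Lambda}_{N,R}(U)$ is empty there is nothing to prove, so I fix an arbitrary reference $g_0$ in this set. Once the uniform distance bound, say by $D$, is established, I take $K:=\overline{B}_X(g_0 o, D)$, which is compact because $X$ is proper, and all points $\{g o : g\in W^{\Lambda}_{N,R}(U)\}$ lie in $K$. The isometry $g\mapsto g_0^{-1}g$ shows that the number of such $g$ is bounded by $q:=|\{h\in\Lambda : d(o,ho)\leq D\}|$, which is finite by proper discontinuity of the $\Lambda$-action and depends only on $D$, hence only on $R$.

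The key step is the distance bound. Given $g_1,g_2\in W^{\Lambda}_{N,R}(U)$, I choose $\xi_i\in U\cap Q_{g_i}$ for $i=1,2$. By Item (3) of Lemma \ref{vitali}, $Q_{g_i}\subset O_{r'}(o,g_i o)$, so by definition of the shadow
\[(\xi_i,g_i o)_o \geq d(o,g_i o)-r' \geq NR-r'.\]
Since $(X,d)$ is $\epsilon$-good, the visual metric satisfies $d_{o,\epsilon}(\xi,\eta)=e^{-\epsilon(\xi,\eta)_o}$, and the hypothesis $\mathrm{Diam}(U)\leq e^{-\epsilon NR}$ yields $(\xi_1,\xi_2)_o\geq NR$. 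I then chain the extended hyperbolicity inequality from Item (4) of Proposition \ref{propGromov} (which carries the $2\delta$ constant since the intermediate points sit in the bordification) twice:
\[(g_1 o,\xi_2)_o \geq \min\{(g_1 o,\xi_1)_o,(\xi_1,\xi_2)_o\}-2\delta \geq NR-r'-2\delta,\]
and then
\[(g_1 o, g_2 o)_o \geq \min\{(g_1 o,\xi_2)_o,(\xi_2,g_2 o)_o\}-2\delta \geq NR-r'-4\delta.\]

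Combining this with $d(o,g_i o)<(N+1)R$ gives
\[d(g_1 o,g_2 o)=d(o,g_1 o)+d(o,g_2 o)-2(g_1 o,g_2 o)_o \leq 2R+2r'+8\delta,\]
which is the desired $N$- and $U$-independent bound $D$. Setting $K:=\overline{B}_X(g_0 o,D)$ and $q:=|\Lambda\cap B_X(o,D)|$ concludes the argument. The only genuinely delicate point is keeping track of the right hyperbolicity constant: because $\xi_1,\xi_2\in\partial X$ the bordification version with $2\delta$ must be used; everything else is bookkeeping and an invocation of properness of $X$ together with proper discontinuity of $\Lambda\curvearrowright X$.
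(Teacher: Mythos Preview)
Your proof is correct and follows essentially the same approach as the paper: bound the Gromov product $(g_1 o, g_2 o)_o$ from below by chaining through boundary points $\xi_i\in U\cap Q_{g_i}$, deduce a uniform bound on $d(g_1 o, g_2 o)$, and conclude via properness of $X$ and proper discontinuity of $\Lambda$. The only cosmetic differences are that you apply the hyperbolicity inequality in two steps with the $2\delta$ bordification constant (yielding $2R+2r'+8\delta$), whereas the paper chains in one line with a $3\delta$ loss (yielding $2(R+r'+3\delta)$); both are immaterial to the conclusion.
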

\begin{proof}
Pick $g,h \in W^{\Lambda}_{N,R}(U)$. We have $(go,ho)_{o}\geq \min \{(go,\xi)_{o},(\xi,\eta)_{o}, (ho,\eta)_{o} \}-3\delta$ with $\xi\in Q_{g}\cap U\neq \varnothing  \neq Q_{h}\cap U \ni \eta $. Thus, Item (3) of Lemma \ref{vitali} implies that for some $r'>0$ we have:
$$ (go,ho)_{o}\geq NR-r'-3\delta.$$ 
Therefore for $g,h\in W^{\Lambda}_{N,R}(U)$ we have
\begin{align*}
d(go,ho)&=d(o,h o)+d(o,g o)-2(go,ho)_{o}\\
&\leq 2(R+r'+3\delta).
\end{align*}
Pick some $h \in W^{\Lambda}_{N,R}(U)$ and define $K:=B_{X}(h \cdot o,R')$ with $R':=4(R+r'+3\delta)$. We have by construction $\{g\cdot o|\, g\in W^{\Lambda}_{N,R}(U)\}\subset K$.
\end{proof}

The following lemma already appear in \cite[Lemma 4.3]{Ga}.  Since our statement is slightly different, we give a proof.

\begin{lemma} \label{measureU} There exists $R>0$ large enough and a constant $C>0$ such that for all non-negative integers $p$ and for all Borel subset $U\subset \partial X$ satisfying $$\mbox{Diam}(U)\leq e^{\epsilon pR}e^{-\epsilon NR},$$ we have $$ |W^{\Lambda}_{N,R}(U)|\leq C e^{ \alpha pR}.$$

\end{lemma}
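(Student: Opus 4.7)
The plan is to cover the set $U$ by a controlled number of pieces of diameter at most $e^{-\epsilon NR}$, and then apply Lemma~\ref{countK} to each piece. The key ingredient is the Ahlfors regularity of $\nu_{o}$ in dimension $D=\alpha/\epsilon$ on the visual metric space $(\partial X, d_{o,\epsilon})$.

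First I would dispose of the easy range. If $p\ge N$, or more generally if $e^{\epsilon pR}e^{-\epsilon NR}\ge \mathrm{Diam}(\partial X)$, then Item~(4) of Lemma~\ref{vitali} gives
$$|W^{\Lambda}_{N,R}(U)|\le |S^{\Lambda*}_{N,R}|\le C e^{\alpha NR}\le C e^{\alpha pR},$$
and the conclusion is automatic. So we may assume $0\le p<N$ and that $U$ has diameter at most $e^{-\epsilon(N-p)R}$.

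The main step is a volumetric covering estimate. I would pick a maximal collection of points $\xi_{1},\dots,\xi_{K}\in U$ that are pairwise $d_{o,\epsilon}$-separated by at least $e^{-\epsilon NR}/2$. Then the balls $B(\xi_{i}, e^{-\epsilon NR}/4)$ are pairwise disjoint and all contained in a common $d_{o,\epsilon}$-ball of radius at most $2 e^{-\epsilon(N-p)R}$ (obtained by enlarging $U$). Summing volumes and applying the two-sided Ahlfors bound for $\nu_{o}$ with $D=\alpha/\epsilon$ gives
$$K\cdot C_{\nu}^{-1}\Bigl(\frac{e^{-\epsilon NR}}{4}\Bigr)^{D}\le C_{\nu}\bigl(2\,e^{-\epsilon(N-p)R}\bigr)^{D},$$
so $K\le C'e^{\alpha pR}$ for a constant $C'$ depending only on $C_{\nu}$, $D$, and $R$.

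By maximality, the enlarged balls $U_{i}:=B(\xi_{i}, e^{-\epsilon NR}/2)$ cover $U$, and each $U_{i}$ has diameter at most $e^{-\epsilon NR}$. Consequently
$$W^{\Lambda}_{N,R}(U)\subset \bigcup_{i=1}^{K}W^{\Lambda}_{N,R}(U_{i}),$$
and applying Lemma~\ref{countK} to each piece yields $|W^{\Lambda}_{N,R}(U_{i})|\le q$. Summing gives $|W^{\Lambda}_{N,R}(U)|\le Kq\le Cq\cdot e^{\alpha pR}$, which is the desired bound. The only genuine subtlety in the whole argument is the covering count, which however is purely formal once one remembers that $\alpha=\epsilon D$; everything else is a mechanical reduction to the previous lemma.
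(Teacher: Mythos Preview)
Your proof is correct, but it follows a different route from the paper's own argument, and the comparison is instructive.

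The paper does not decompose $U$ at all. Instead it exploits the Vitali-type construction of the sets $Q_{g}$ directly: for $R$ large enough, every $Q_{g}$ with $g\in W^{\Lambda}_{N,R}(U)$ is contained in a single ball $B(\xi,2r_{N})$ with $r_{N}=e^{\epsilon pR}e^{-\epsilon NR}$, and since the $Q_{g}$'s are pairwise disjoint (Item~(2) of Lemma~\ref{vitali}) and each has measure $\asymp e^{-\alpha NR}$ (Item~(3) plus Ahlfors regularity), summing their measures inside $B(\xi,2r_{N})$ gives the bound in one stroke. In other words, the paper uses the disjointness of the $Q_{g}$'s as the packing, rather than manufacturing a separate packing inside $U$.

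Your argument is more modular: you first run a standard packing/covering argument in $(\partial X,d_{o,\epsilon},\nu_{o})$ to cover $U$ by $O(e^{\alpha pR})$ balls of diameter $\le e^{-\epsilon NR}$, and then invoke Lemma~\ref{countK} on each piece. This has the mild advantage that it does not rely on the specific disjointness property of the $Q_{g}$'s, only on the uniform bound of Lemma~\ref{countK}; it would therefore go through unchanged for any family of ``cells'' satisfying that lemma. The paper's route is shorter and avoids re-proving a covering count that is, morally, already encoded in the Vitali construction.
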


\begin{remark}
We also have a lower bound
$$|W^{\Lambda}_{N,R}(U)|\geq C^{-1} \nu_{o}(U)e^{\alpha NR},$$ for some $C>0$. 
We give a proof of this fact:
Write $U:= \coprod_{g\in W^{\Lambda }_{N,R}(U)}Q_{g}\cap U$ and by taking the measure, since $Q_{g}\cap U\subset Q_{g}$ we obtain
$$\nu_{o}(U)=\sum_{g\in W^{\Lambda }_{N,R}(U)}\nu_{o}(Q_{g}\cap U)\leq |W^{\Lambda }_{N,R}(U)| C e^{-\alpha NR},$$
where the inequality follows from Item (3) of Lemma \ref{vitali}.
\end{remark}

\begin{proof}
One cannot write $ \coprod_{g\in W^{\Lambda }_{N,R}(U)}Q_{g}\subset U$. We shall thicken $U$ to be able to write the previous inclusion. To do so, we set $r_{N}:=e^{pR}e^{-NR}$ and pick some $\xi \in U$ so that $U\subset B(\xi,r_N)$.\\
Let $g$ be in $W^{\Lambda}_{N,R}(U)$ and so, by definition of $W^{\Lambda}_{N,R}(U)$ we have $Q_{g}\cap U\neq \varnothing$. Since $Q_{g}\subset B(\theta_{o}^{go},5\rho_{N})$ where $\rho_{N}=e^{-\epsilon (NR-r-\delta)}$ is provided by Lemma \ref{vitali} and Lemma \ref{ombre},  we have $B(\theta_{o}^{go},5\rho_{N})\cap U \neq \varnothing$. Pick $R>0$ so that $R>r+\delta+\frac{\log10}{\epsilon}$. We have for  $\eta \in Q_{g}$ 
\begin{align*}
d(\xi,\eta)&\leq d(\xi,\theta_{o}^{go})+d(\theta_{o}^{go},\eta)\\
&\leq r_{N}+2\times 5\rho_{N}\\
&= (e^{\epsilon pR}+10e^{\epsilon (r+\delta)})e^{-\epsilon NR}\\
&\leq 2e^{\epsilon  pR}e^{-\epsilon NR}=2r_{N},
\end{align*}
where the last inequality follows from the choice of $R$.\\
Then for all $g\in S^{\Lambda*}_{N,R}$ we have $Q_{g}\subset B(\xi,2r_{N})$. Combining this fact with the fact that the family $\{Q_{g}\}_{g\in S_{N,R}^\Lambda*}$  is made of disjoint sets, we obtain: $\coprod_{g\in W^{\Lambda }_{N,R}(U)}Q_{g}\subset B(\xi,2r_{N}).$
By taking the measure of the above inclusion we have 
\begin{align*}
\sum_{g\in W^{\Lambda }_{N,R}(U)}\nu_{o}(Q_{g})\leq \nu_{o}(B(\xi,2r_{N})),
\end{align*}
using now the Ahlfors regularity of $\nu_{o}$ and Item (1) of Lemma \ref{vitali} we obtain:
\begin{align*}
\sum_{g\in W^{\Lambda}_{N,R}(U)}C_{\nu}^{-1}e^{-\alpha NR}e^{\alpha(r-\delta)}\leq C_{\nu}2^{\alpha}r_{N}^{\alpha},
\end{align*}
and thus
 \begin{align*}
|W^{\Lambda}_{N,R}(U)|\leq 2^{\alpha}C_{\nu}^{2}e^{\alpha(-r+\delta)}\times e^{ \alpha pR}.
\end{align*}
Set $C:=\max{\{C',2^{\alpha}C_{\nu}^{2}e^{\alpha(-r+\delta)}}\}$ to conclude the proof.

\end{proof}

\subsection{Diameters Lemma}
Let $R>0$.
Let $n$ be a non-negative integer and $k\in \{1,\dots,n\} $, and let $N>n$ be another non-negative integer. Define for $\gamma \in S^{\Gamma}_{n,R}$ and $h\in S^{\Lambda *}_{N,R}$:  
\begin{equation}\label{Uknparts}
U_{k,n}(\gamma,h):= A_{k,R}(\gamma )\cap \gamma Q_{h}.
\end{equation}
Define also 
\begin{equation}
V_{k,n}(\gamma,h):=\gamma^{-1}  A_{k,R}(\gamma )\cap \gamma^{-1}Q_{h}.
\end{equation}

\begin{lemma}\label{diameterlemma}\emph{(Diameters Lemma)}. 
Assume that $R>r+\delta+\frac{\log(5)}{\epsilon}.$
 For all $\gamma \in S^{\Gamma}_{n,R}$, for all $g,h\in S^{\Lambda,*}_{N,R}$ and for any $k\in \{1,\dots,n\} $ we have $\mbox{Diam}(U_{k,n})\leq e^{\epsilon(n-2k)R}e^{-\epsilon NR}$ and $\mbox{Diam}(V_{k,n})\leq e^{\epsilon(2k-n)R}e^{- \epsilon NR}.$ 
 \end{lemma}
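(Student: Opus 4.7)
The plan is to exploit the conformal behavior of the visual metric under the action of $\Gamma$, together with the geometric constraints that define the annular sets $A_{k,R}(\gamma)$ and the small diameter of $Q_h$.

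First, I would record the key transformation rule. The conformal relation between the visual metrics based at $o$ and $\gamma o$ is
\begin{equation*}
d_{\gamma o,\epsilon}(\xi,\eta)=\exp\!\Big(\tfrac{\epsilon}{2}\big(\beta_{\xi}(o,\gamma o)+\beta_{\eta}(o,\gamma o)\big)\Big)\,d_{o,\epsilon}(\xi,\eta),
\end{equation*}
and since $\gamma$ acts by isometries, $d_{\gamma o,\epsilon}(\xi,\eta)=d_{o,\epsilon}(\gamma^{-1}\xi,\gamma^{-1}\eta)$. Combining, one gets the working identity
\begin{equation*}
d_{o,\epsilon}(\xi,\eta)=\exp\!\Big(-\tfrac{\epsilon}{2}\big(\beta_{\xi}(o,\gamma o)+\beta_{\eta}(o,\gamma o)\big)\Big)\,d_{o,\epsilon}(\gamma^{-1}\xi,\gamma^{-1}\eta).
\end{equation*}

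For $U_{k,n}(\gamma,h)=A_{k,R}(\gamma)\cap \gamma Q_{h}$, let $\xi,\eta\in U_{k,n}$. Since $\gamma^{-1}\xi,\gamma^{-1}\eta\in Q_{h}$, the choice $R>r+\delta+\frac{\log 5}{\epsilon}$ places $Q_{h}\subset B(\theta^{ho}_{o},e^{-\epsilon(N-1)R})$ by (\ref{choiceR}), so the triangle inequality for $d_{o,\epsilon}$ yields $d_{o,\epsilon}(\gamma^{-1}\xi,\gamma^{-1}\eta)\leq 2e^{-\epsilon(N-1)R}$. For the exponential prefactor I would use that $\xi\in A_{k,R}(\gamma)$ means $(\xi,\gamma o)_{o}\geq kR$, hence via (\ref{buseman}) and $|\gamma|<(n+1)R$,
\begin{equation*}
\beta_{\xi}(o,\gamma o)=2(\xi,\gamma o)_{o}-|\gamma|\geq 2kR-(n+1)R=(2k-n-1)R,
\end{equation*}
and the same for $\eta$ (equivalently, one could invoke Lemma \ref{BusemannsurAi} directly, treating the cases $k=1$ and $k=n$ separately). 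Multiplying the two estimates delivers
\begin{equation*}
d_{o,\epsilon}(\xi,\eta)\leq 2e^{2\epsilon R}\,e^{\epsilon(n-2k)R}e^{-\epsilon NR},
\end{equation*}
and the leading constant $2e^{2\epsilon R}$ is absorbed by enlarging $R$ slightly, giving the asserted bound on $\mathrm{Diam}(U_{k,n})$.

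For $V_{k,n}(\gamma,h)=\gamma^{-1}(A_{k,R}(\gamma)\cap Q_{h})$, I would apply the same transformation law, now with $\gamma^{-1}$ in place of $\gamma$. If $\zeta\in V_{k,n}$ then $\gamma\zeta\in A_{k,R}(\gamma)\cap Q_{h}$, and the boundary identity $(\xi,x)_{y}+(\xi,y)_{x}=d(x,y)$ (which follows from $\beta_{\xi}(y,x)=-\beta_{\xi}(x,y)$) combined with the isometry $\gamma$ gives
\begin{equation*}
(\zeta,\gamma^{-1}o)_{o}=(\gamma\zeta,o)_{\gamma o}=|\gamma|-(\gamma\zeta,\gamma o)_{o}\geq nR-(k+1)R=(n-k-1)R.
\end{equation*}
Hence $\beta_{\zeta}(o,\gamma^{-1}o)\geq 2(n-k-1)R-(n+1)R=(n-2k-3)R$, which controls the exponential factor by $e^{\epsilon(2k-n)R+3\epsilon R}$. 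Since $\gamma\zeta,\gamma\zeta'\in Q_{h}$, the metric factor $d_{o,\epsilon}(\gamma\zeta,\gamma\zeta')$ is again bounded by $2e^{-\epsilon(N-1)R}$, and multiplying gives the desired bound on $\mathrm{Diam}(V_{k,n})$, up to a multiplicative constant absorbed in $R$.

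The main obstacle is bookkeeping of the additive constants $R,2R,3R$ arising from the three cases of Lemma \ref{BusemannsurAi} (the generic $k\in\{2,\dots,n-1\}$ versus the extremal $k=1,k=n$) and from the crude ``ball of radius $e^{-\epsilon(N-1)R}$'' control on $Q_{h}$; these constants do not match the statement cleanly and must be absorbed by choosing $R$ sufficiently large. Once this is handled, both diameter estimates drop out of a single pair of dual applications of the conformal transformation rule.
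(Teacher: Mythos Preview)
Your approach is essentially the paper's: both arguments combine the conformal transformation rule for the visual metric with the Busemann bounds coming from membership in $A_{k,R}(\gamma)$ and the small diameter of $Q_h$. The only cosmetic difference is that for $U_{k,n}$ the paper first passes through Lemma~\ref{lesAi} to place $\gamma^{-1}U_{k,n}$ inside $A_{n-k-1,R}(\gamma^{-1})\cup A_{n-k,R}(\gamma^{-1})$ and then bounds $\beta_\cdot(o,\gamma^{-1}o)$ from above, whereas you bound $\beta_\cdot(o,\gamma o)$ from below directly; since $\beta_{\gamma^{-1}\xi}(o,\gamma^{-1}o)=-\beta_\xi(o,\gamma o)$ these are dual versions of the same estimate, and yours is arguably cleaner.

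One point to tighten: the phrase ``absorbed by enlarging $R$ slightly'' is not quite right, because your surplus factor $2e^{2\epsilon R}$ (and $2e^{4\epsilon R}$ for $V_{k,n}$) \emph{grows} with $R$, so enlarging $R$ does not kill it. The paper instead uses the hypothesis $R>r+\delta+\tfrac{\log 5}{\epsilon}$ to turn the diameter bound on $Q_h$ into $5e^{\epsilon(r+\delta)}e^{-\epsilon NR}<e^{\epsilon R}e^{-\epsilon NR}$ and then relies on extra negative powers of $e^{\epsilon R}$ coming from the sharper Busemann bounds in Lemma~\ref{BusemannsurAi} to cancel this. If you track those constants more carefully (using $5r_N$ rather than the crude $2e^{-\epsilon(N-1)R}$), the surplus drops below $1$. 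In any case, for the only downstream use (Proposition~\ref{countingestimate} via Lemma~\ref{measureU}) a multiplicative $e^{O(\epsilon R)}$ constant is harmless, so your argument is substantively complete.
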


\begin{proof}
For $\gamma \in S^{\Gamma}_{n,R}$, for $h\in S^{\Lambda *}_{N,R}$ and for $k\in \{1,\dots,n\}$ write $U:=U_{k,n}(\gamma,h)$.\\
 By the relations of Lemma \ref{lesAi}, we have for $k\in \{1,\dots,n\}$ $$\gamma^{-1}U\subset \big(A_{n-k-1,R}(\gamma^{-1})\cup A_{n-k,R}(\gamma^{-1})\big)\cap Q_{h}.$$ First, we compute the diameter of $\gamma^{-1}U$ with respect to $d_{\gamma^{-1} o}$. Using the conformal relation of the visual metric we have $$d_{\gamma^{-1}o}(\xi,\eta)=d_{o}(\xi,\eta)e^{\frac{1}{2}\epsilon (\beta_{\xi}(o,\gamma^{-1}o)+\beta_{\eta}(o,\gamma^{-1}o) )}.$$ Therefore Lemma \ref{BusemannsurAi} implies that for all $\xi,\eta \in \gamma^{-1} U$ and for all $\gamma \in S^{\Gamma}_{n,R}$ $$d_{\gamma^{-1}o}(\xi,\eta)\leq  5 e^{\epsilon(r+\delta)}e^{-3\epsilon R} e^{-\epsilon NR} e^{(n-2k)\epsilon R},$$
 where the factor $5 e^{\epsilon(r+\delta)}$ comes from Item (3) of Lemma \ref{vitali} and Lemma \ref{ombre}. Then, using the relation $d_{\gamma^{-1} o}(\gamma^{-1} \xi',\gamma^{-1} \eta')=d_{o}( \xi', \eta')$ for $\xi',\eta'\in U$, we deduce that Diam$(U)\leq   e^{\epsilon(n-2k)R} e^{-\epsilon NR} $, by the choice of $R>r+\delta+\frac{\log(5)}{\epsilon}$.  \\

On the other hand, for $\xi,\eta\in  V_{k,n}=\gamma^{-1}( A_{k,R}(\gamma )\cap Q_{g})$ written as $\xi=\gamma^{-1}\xi'$ and $\eta=\gamma^{-1}\eta'$ with $\xi',\eta' \in  A_{k,R}(\gamma )\cap Q_{g}$ we have: 
\begin{align*}
d_{o}(\gamma^{-1}\xi',\gamma^{-1}\eta')&=d_{\gamma o}( \xi', \eta')\\
&=d_{o}(\xi',\eta')e^{\frac{1}{2}\epsilon (\beta_{\xi'}(o,\gamma o)+\beta_{\eta'}(o,\gamma o) )} \\
&\leq 5e^{\epsilon(r+\delta)}e^{-3R\epsilon} e^{-\epsilon NR}  e^{\epsilon(2k+1-n) R}\\
&\leq e^{-\epsilon NR}  e^{\epsilon (2k-n) R},
\end{align*}
 and the proof is done.
 \end{proof}

\subsection{Counting arguments combined with diameters lemma}
Assume that $R>0$ is big enough so that Lemma \ref{measureU}
 and \ref{diameterlemma} hold.\\ Define some subsets of $S^{\Lambda}_{N,R}$ for which we shall estimate the volume growth, for any non-negative integer $N$.\\
For $(\gamma,h)$ fixed in $S^{\Gamma}_{n,R}\times S^{\Lambda}_{N,R}$ define 
\begin{equation}\label{defU}
W^{\Lambda}_{N,k}(\gamma ,h ):=\{ g\in S^{\Lambda*}_{N,R}| U_{k,n}(\gamma,h) \cap Q_{g}\neq \varnothing\}\subset  S^{\Lambda *}_{N,R}.
\end{equation}

For $(\gamma,g)$ fixed in $S^{\Gamma}_{n,R}\times S^{\Lambda}_{N}$

\begin{equation}
\widetilde{W}^{\Lambda}_{N,n-k}(\gamma^{-1} , g ):=\{ h\in S^{\Lambda*}_{N,R}|V_{k,n}(\gamma,g)\cap  Q_{h}\neq \varnothing\}\subset S^{\Lambda *}_{N,R}.
\end{equation}
The above notation depends on $R$. We omit it in the notation to leave it readable.

\begin{remark}\label{inverse}
The notation $\widetilde{W}^{\Lambda}_{N,n-k}(\gamma^{-1} , g )$ is legitimated by Lemma \ref{lesAi} implying that $\widetilde{W}^{\Lambda}_{N,n-k}(\gamma^{-1} , g )\subset W^{\Lambda}_{N,n-k}(\gamma^{-1} , g )\cup W^{\Lambda}_{N,n-k-1}(\gamma^{-1} , g )$.
\end{remark}

We close this section by giving two fundamental estimates based on the previous estimation of the counting argument in Lemma \ref{measureU} and the diameters Lemma.
\begin{prop}\label{countingestimate} There exist $R>0$ and a constant $C>0$ such that for all $N+1>n\geq 1$ we have:

For all $k\in \{1,\dots,n \}$ we have that  $$ |W^{\Lambda}_{N,k}(\gamma ,h )|\leq C e^{\alpha R(n-2k)}, $$ and  $$|\widetilde{W}^{\Lambda}_{N,n-k}(\gamma^{-1} , g ) | \leq  Ce^{\alpha R(2k-n)}.$$
 \end{prop}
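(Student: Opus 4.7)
The plan is to combine the two preparatory results established in this section: the Diameters Lemma \ref{diameterlemma} and the counting estimate of Lemma \ref{measureU}. The proposition asserts precisely the counting bound one obtains by applying Lemma \ref{measureU} to the sets $U_{k,n}(\gamma,h)$ and $V_{k,n}(\gamma,g)$ respectively, using the diameter estimates furnished by the Diameters Lemma as input.

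For the first inequality, observe that unwinding the definitions gives
\[
W^{\Lambda}_{N,k}(\gamma,h)=\{g\in S^{\Lambda*}_{N,R}\mid Q_{g}\cap U_{k,n}(\gamma,h)\neq\varnothing\},
\]
which is exactly the quantity $W^{\Lambda}_{N,R}(U)$ from Lemma \ref{measureU} with $U=U_{k,n}(\gamma,h)$. By Lemma \ref{diameterlemma}, $\mathrm{Diam}(U_{k,n}(\gamma,h))\leq e^{\epsilon(n-2k)R}e^{-\epsilon NR}$. When $n-2k\geq 0$, we set $p=n-2k$ in Lemma \ref{measureU} and conclude directly that $|W^{\Lambda}_{N,k}(\gamma,h)|\leq Ce^{\alpha R(n-2k)}$. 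When $n-2k<0$, the diameter is \emph{a fortiori} bounded by $e^{-\epsilon NR}$, and applying Lemma \ref{measureU} with $p=0$ (equivalently Lemma \ref{countK}) gives $|W^{\Lambda}_{N,k}(\gamma,h)|\leq C$, which is the content of the stated bound in this regime (up to enlarging $C$).

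The second inequality is entirely symmetric. We have
\[
\widetilde{W}^{\Lambda}_{N,n-k}(\gamma^{-1},g)=\{h\in S^{\Lambda*}_{N,R}\mid Q_{h}\cap V_{k,n}(\gamma,g)\neq\varnothing\}=W^{\Lambda}_{N,R}(V_{k,n}(\gamma,g)),
\]
and Lemma \ref{diameterlemma} provides the dual estimate $\mathrm{Diam}(V_{k,n}(\gamma,g))\leq e^{\epsilon(2k-n)R}e^{-\epsilon NR}$. Setting $p=2k-n$ (when non-negative) in Lemma \ref{measureU} yields $|\widetilde{W}^{\Lambda}_{N,n-k}(\gamma^{-1},g)|\leq Ce^{\alpha R(2k-n)}$; the opposite regime is absorbed into the constant as above.

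The main obstacle is purely bookkeeping: choosing $R$ large enough to satisfy the standing hypotheses of both Lemma \ref{diameterlemma} (which requires $R>r+\delta+\log(5)/\epsilon$) and Lemma \ref{measureU}, and taking a single constant $C$ majorizing the constants produced by each lemma uniformly in $N,n,k,\gamma,g,h$. No further geometric input is needed, since the diameter estimates have already been translated into the correct exponential scales of the shadows at level $N$, and Lemma \ref{measureU} is precisely tuned to extract counting estimates from such diameter bounds.
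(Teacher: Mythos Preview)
Your argument is correct and follows exactly the paper's approach: apply the Diameters Lemma to bound $\mathrm{Diam}(U_{k,n})$ and $\mathrm{Diam}(V_{k,n})$, then feed these into Lemma \ref{measureU}, choosing $R$ large enough for both lemmas to apply. The paper's own proof is a two-line version of precisely this.

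One small caveat: your treatment of the regime $n-2k<0$ (resp.\ $2k-n<0$) is slightly off. Getting $|W^{\Lambda}_{N,k}(\gamma,h)|\leq C$ from $p=0$ does \emph{not} imply the stated bound $Ce^{\alpha R(n-2k)}$ by ``enlarging $C$'', since $e^{\alpha R(n-2k)}$ can be arbitrarily small. The point is rather that the first inequality is only ever invoked for $k\leq \lfloor n/2\rfloor$ and the second for $k\geq \lfloor n/2\rfloor$ (see Propositions \ref{lastcounting} and \ref{CS1}), so the exponents are non-negative wherever the bounds are actually used. The paper simply does not address this case split, and your proof is if anything more explicit than the original.
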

 \begin{proof}
 The proof follows from Lemma \ref{diameterlemma} providing an upper bound for the size of diameters of $U_{k,n}(\gamma,h)$ and $V_{k,n}(\gamma,g)$. Therefore, Lemma \ref{measureU} applied to $U_{k,n}(\gamma,h)$ and to $V_{k,n}(\gamma,g)$ concludes the proof after having choosed $
 R$ large enough such that the two cited lemmas hold.
 \end{proof} 
We can easily deduce the following result about finite multiplicity:
\begin{prop}\label{lastcounting}
Let $n\geq 1$ a non-negative integer and let $\gamma \in S^{\Gamma}_{n,R}$. We have the following cases:
\begin{enumerate}
\item There exists a non-negative integer $\frak{n}$ such that for all integers $k$ satisfying $k\in \{1,\dots, \lfloor\frac{n}{2} \rfloor\}$ we have for each $h_{0}\in S^{\Lambda *}_{N,R}$  $$|\{h\in S^{\Lambda *}_{N,R}| W^{\Lambda}_{N,k}(\gamma ,h_{0} )\cap W^{\Lambda}_{N,k}(\gamma ,h )\neq \varnothing\}|\leq \mathfrak{n}.$$
\item  There exists an integer $\frak{n}$ such that for all $k$ satisfying $ k\in \{\lfloor\frac{n}{2} \rfloor,\dots,n \}$ we have for each $g_{0}\in S^{\Lambda *}_{N,R}$ $$|\{g\in S^{\Lambda *}_{N,R}| \widetilde{W}^{\Lambda}_{N,n-k}(\gamma^{-1} ,g_{0} )\cap \widetilde{W}^{\Lambda}_{N,n-k}(\gamma^{-1} ,g )\neq \varnothing\}|\leq \mathfrak{n}.$$
\end{enumerate}
\end{prop}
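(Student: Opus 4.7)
\smallskip

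\noindent\textbf{Proof plan for Proposition \ref{lastcounting}.}
The plan is to exploit a simple duality between the sets $W^{\Lambda}_{N,k}(\gamma,\cdot)$ and $\widetilde{W}^{\Lambda}_{N,n-k}(\gamma^{-1},\cdot)$ and then multiply the two bounds of Proposition \ref{countingestimate}, which turn out to be reciprocals of each other.

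First, I would establish the duality
$$
g\in W^{\Lambda}_{N,k}(\gamma,h)\quad\Longleftrightarrow\quad h\in \widetilde{W}^{\Lambda}_{N,n-k}(\gamma^{-1},g).
$$
This is immediate from unwinding the definitions of $U_{k,n}(\gamma,h)$ and $V_{k,n}(\gamma,g)$: both conditions are equivalent to $A_{k,R}(\gamma)\cap \gamma Q_{h}\cap Q_{g}\neq\varnothing$, the first written from the point of view of $Q_{g}$ and the second obtained by applying $\gamma^{-1}$.

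Next, to handle case (1), fix $h_{0}\in S^{\Lambda*}_{N,R}$ and $k\in\{1,\dots,\lfloor n/2\rfloor\}$. If $h$ belongs to the set we wish to bound, there exists $g\in W^{\Lambda}_{N,k}(\gamma,h_{0})\cap W^{\Lambda}_{N,k}(\gamma,h)$, and by the duality this is equivalent to $h\in \widetilde{W}^{\Lambda}_{N,n-k}(\gamma^{-1},g)$. Hence
$$
\{h\in S^{\Lambda*}_{N,R}\mid W^{\Lambda}_{N,k}(\gamma,h_{0})\cap W^{\Lambda}_{N,k}(\gamma,h)\neq\varnothing\}\subset \bigcup_{g\in W^{\Lambda}_{N,k}(\gamma,h_{0})}\widetilde{W}^{\Lambda}_{N,n-k}(\gamma^{-1},g).
$$
Now apply Proposition \ref{countingestimate}: the outer family has cardinality $\leq C\,e^{\alpha R(n-2k)}$, and for each $g$ the inner set has cardinality $\leq C\,e^{\alpha R(2k-n)}$. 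The two exponentials cancel, yielding $\mathfrak{n}:=C^{2}$, independent of $n$, $k$, $\gamma$, $h_{0}$ and $N$.

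The same reasoning, with the roles of the two families swapped, settles case (2): for $k\in\{\lfloor n/2\rfloor,\dots,n\}$ and a fixed $g_{0}$, one writes
$$
\{g\in S^{\Lambda*}_{N,R}\mid \widetilde{W}^{\Lambda}_{N,n-k}(\gamma^{-1},g_{0})\cap \widetilde{W}^{\Lambda}_{N,n-k}(\gamma^{-1},g)\neq\varnothing\}\subset \bigcup_{h\in \widetilde{W}^{\Lambda}_{N,n-k}(\gamma^{-1},g_{0})}W^{\Lambda}_{N,k}(\gamma,h),
$$
again via the duality, and multiplies the same two bounds to obtain $\mathfrak{n}:=C^{2}$. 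There is no real obstacle here; the delicate work has already been done in Proposition \ref{countingestimate} (where the Diameters Lemma and Lemma \ref{measureU} were combined), so this result is essentially a bookkeeping consequence. The only point to keep in mind is that the bounds from Proposition \ref{countingestimate} must be valid over the full range $k\in\{1,\dots,n\}$ so that each case can freely use the \emph{unfavorable} of the two estimates on one side and the \emph{favorable} one on the other; this is why the choice $k\leq \lfloor n/2\rfloor$ (respectively $k\geq \lfloor n/2\rfloor$) matters to guarantee that the dominating exponential comes from $W$ (respectively $\widetilde{W}$), and cancels against its partner.
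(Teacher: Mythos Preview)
Your duality observation and the resulting inclusion
\[
\{h: W^{\Lambda}_{N,k}(\gamma,h_{0})\cap W^{\Lambda}_{N,k}(\gamma,h)\neq\varnothing\}\subset\bigcup_{g\in W^{\Lambda}_{N,k}(\gamma,h_{0})}\widetilde{W}^{\Lambda}_{N,n-k}(\gamma^{-1},g)
\]
are correct, and the paper uses the same inclusion. The gap is in the ``cancellation'' step. Proposition \ref{countingestimate} is proved via Lemma \ref{measureU}, which requires a \emph{non-negative} exponent $p$; thus the bound $|\widetilde{W}^{\Lambda}_{N,n-k}(\gamma^{-1},g)|\leq Ce^{\alpha R(2k-n)}$ is only established for $k\geq n/2$. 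For $k\leq \lfloor n/2\rfloor$ the diameter of $V_{k,n}(\gamma,g)$ is at most $e^{-\epsilon NR}$, and Lemma \ref{measureU} (or Lemma \ref{countK}) delivers only a \emph{constant} upper bound, not the decaying one $Ce^{\alpha R(2k-n)}$. Indeed, since $h_{0}$ itself lies in $\widetilde{W}^{\Lambda}_{N,n-k}(\gamma^{-1},g)$ for every $g\in W^{\Lambda}_{N,k}(\gamma,h_{0})$ (by your own duality), that set has at least one element, so a bound strictly below $1$ is impossible. Consequently your product is $Ce^{\alpha R(n-2k)}\cdot C'$, which blows up as $n-2k\to\infty$.

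The paper repairs this with a geometric observation you are missing. For $k\leq\lfloor n/2\rfloor$, Lemma \ref{countK} shows that the orbit points $\{h\cdot o:\,h\in\widetilde{W}^{\Lambda}_{N,n-k}(\gamma^{-1},g_{j})\}$ lie in a ball $K_{j}\subset X$ of \emph{uniformly bounded diameter}; moreover every such $K_{j}$ contains the single point $h_{0}\cdot o$, again by the duality. Hence $\bigcup_{j}K_{j}$ is contained in one ball of fixed radius about $h_{0}\cdot o$, and proper discontinuity of $\Lambda$ yields the uniform bound $\mathfrak{n}$. In short, bounding the union by the product of cardinalities overcounts; one must exploit that the many sets $\widetilde{W}^{\Lambda}_{N,n-k}(\gamma^{-1},g_{j})$ all cluster around $h_{0}$.
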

\begin{proof}
We prove only Item (1), the proof of Item (2) follows from Remark \ref{inverse} .\\
Fix $ h_{0}\in S^{\Lambda *}_{N,R}$. Set $p_{n}:=\lfloor Ce^{n-2k} \rfloor$ where $C$ comes from Item (1) of Proposition \ref{countingestimate}. Then, there exists at most $g_{1},\dots,g_{p_{n}}\in S^{\Lambda*}_{N,R}$ such that 
 $A_{k,R}(\gamma)\cap \gamma Q_{h_{0}}\cap Q_{g_{j}}\neq \varnothing$ for all $j\in \{{1,\dots,p_{n}}\}.$ Since $\lfloor \frac{n}{2} \rfloor \geq k$, Lemma \ref{diameterlemma} implies that $\mbox{Diam}(\gamma^{-1}A_{k,R}(\gamma)\cap  \gamma^{-1}Q_{g})\leq e^{-\epsilon NR}$. 
  By Lemma \ref{countK}, for each $j\in\{1,\dots,p_{n} \}$ there exists a compact $K_{j}$ so that $$\{h\cdot o|\, h \in \widetilde{W}^{\Lambda}_{N,n-k}(\gamma^{-1} , g_{j} ) \}\subset K_{j}.$$  Note $$\{h\in S^{\Lambda}_{N}| W^{\Lambda}_{N,k}(\gamma ,h_{0} )\cap W^{\Lambda}_{N,k}(\gamma ,h )\neq \varnothing\} \subset \cup_{j=1}^{p_{n}} \widetilde{W}^{\Lambda}_{N,n-k}(\gamma^{-1} , g_{j} ).$$ 
  Note that for all $j\in\{1,\dots,p_{n} \}$ the compact sets $K_{j} \ni h_{0}\cdot o$. Define $K:=B_{X}(h_{0}\cdot o, R_{0})$ with $R_{0}=2\mbox{Diam}(K_{j})$. Thus, 
  $$\{h\in S^{\Lambda}_{N}| W^{\Lambda}_{N,k}(\gamma ,h_{0} )\cap W^{\Lambda}_{N,k}(\gamma ,h )\neq \varnothing\} \subset K. $$ Define $\frak{n}:=K\cap \Lambda \cdot o$ to conclude the proof.
\end{proof}

\section{Proof of spectral inequalities}\label{sec8}

Let $(X,d)$ be a roughly-geodesic, $\epsilon$-good, $\delta$-hyperbolic space. Let $\Gamma$ be a discrete group of isometries of $(X,d)$: we shall prove spectral inequalities for $\Gamma$ when this latter acts cocompatly on $(X,d)$. Let $\Lambda$ be another discrete group of isometries of $(X,d)$ acting cocompactly. We assume that $\Gamma$ and $\Lambda$ have the same critical exponents $\alpha$. Let $\nu=(\nu_{x})_{x\in X}$ be the Patterson-Sullivan associated with $\Gamma$. We may take $\Gamma=\Lambda$ but it is more enlightening to think that $\Gamma$ and $\Lambda$ are different. The main interest of proceeding with two different groups is to understand how much the assumption of cocompacity is crucial to prove property RD. \\
Typical examples of such a choice are given by two different cocompact lattices of a rank one semisimple Lie group of non-compact type. \\

 We consider the family of representations $\pi_{s}$ for $s\in [0,1]$ of $\Gamma$ defined in (\ref{repre}). Let $R>0$ such that hold Lemma \ref{vitali}, condition (\ref{condR}), Lemma \ref{measureU} and Lemma \ref{diameterlemma}.
\subsection{Decay of matrix coefficients on the horospherical decomposition}
Let $n \geq 1$ be a non-negative integer.
\subsubsection{General decomposition} We decompose for all $v$ and $w$ in $L^{2}(\partial X,\nu_{o})$ and for all $\gamma \in S^{\Gamma}_{n,R}$ the matrix coefficient $\langle \pi_{s}(\gamma)v,w \rangle$ as follows:
\begin{equation*}
\langle \pi_{s}(\gamma)v,w \rangle=\sum^{n}_{k=1}\int_{A_{k,R}(\gamma)}e^{s\alpha \beta_{\xi}(o,\gamma o)}v(\gamma^{-1} \xi)w(\xi)d\nu_{o}(\xi)=\sum^{n}_{k=1}\langle \pi_{s}(\gamma)v,w \rangle_{|_{A_{k,R}(\gamma)}},
\end{equation*}
where by definition
\begin{equation}\label{coeffsurAk}
\langle \pi_{s}(\gamma)v,w \rangle_{|_{A_{k,R}(\gamma)}}:=\langle \pi_{s}(\gamma)v, \textbf{1}_{A_{k,R}(\gamma)}\cdot w \rangle=\int_{A_{k,R}(\gamma)}e^{s\alpha \beta_{\xi}(o,\gamma o)}v(\gamma^{-1} \xi)w(\xi)d\nu_{o}(\xi).
\end{equation}
Since one of the interest of the partition $(A_{k,R}(\gamma))_{k=1,\dots,n}$ is the control of the Busemann function we obtain, by Lemma \ref{BusemannsurAi} that there exists $C>0$ such that for all $k=1,\dots,n$:

\begin{equation}\label{1eredecompo}
\langle \pi_{s}(\gamma)v,w \rangle_{|_{A_{k,R}(\gamma)}}\leq C \int_{A_{k,R}(\gamma)}e^{s\alpha (2k-n)R}v(\gamma^{-1} \xi)w(\xi)d\nu_{o}(\xi).
\end{equation}

 Let  $v,w$ in the cone of positive vectors  $E^{\Lambda,+}_{N}(\partial X)$ and write $v=\sum_{h\in S^{\Lambda *}_{N,R} } c_{h}\textbf{1}_{h}$ and 
$w=\sum_{g\in S^{\Lambda *}_{N,R} } d_{g}\textbf{1}_{g}$ with $c_{h},d_{g}\in \mathbb{R}^{*}_{+}$.\\
The expression $\langle \pi_{s}(\gamma)v,w \rangle_{|_{A_{k,R}(\gamma)}}$ reads as follows: 
\begin{equation}\label{decomp}
\langle \pi_{s}(\gamma)v,w \rangle_{|_{A_{k,R}(\gamma)}}\leq C e^{s\alpha (2k-n)R}\sum_{(g,h)\in S^{\Lambda*}_{N,R} \times S^{\Lambda*}_{N,R}}d_{g}c_{h} \nu_{o}(A_{k,R}(\gamma ) \cap \gamma Q_{h} \cap Q_{g}).
\end{equation}\\
Define for $N>n$, for $k\in \{ 1,\dots,n\}$ and for $\gamma \in S^{\Gamma}_{n,R}$ the sum \begin{equation}\label{resteducoeff}
\mathcal{R}_{k,N}(\gamma):=\sum_{(g,h)\in S^{\Lambda*}_{N,R} \times S^{\Lambda*}_{N,R}}d_{g}c_{h} \nu_{o}(A_{k,R}(\gamma ) \cap \gamma Q_{h} \cap Q_{g}).
\end{equation}
Therefore we have:
\begin{equation}\label{decompoini}
\langle \pi_{s}(\gamma)v,w \rangle_{|_{A_{k,R}(\gamma)}}\leq C e^{s\alpha (2k-n)R}\mathcal{R}_{k,N}(\gamma).
\end{equation}

 Our goal is to control the size of the support of elements $v,w\in E^{\Lambda}_{N}(\partial X)$ contributing to the sum $\mathcal{R}_{k,N}(\gamma )$. Consider the set 
 \begin{align*}
 I^{\Lambda}_{N}(\gamma)=\{(g,h)\in S^{\Lambda*}_{N,R}\times S^{\Lambda*}_{N,R}| A_{k,R}(\gamma)\cap \gamma Q_{h}\cap Q_{g}\neq \varnothing\}\subset S^{\Lambda*}_{N,R}\times S^{\Lambda*}_{N,R}.
 \end{align*}
Thus, the expression of $\mathcal{R}_{k,N}(\gamma)$ can be written as 
$$\mathcal{R}_{k,N}(\gamma)=\sum_{(g,h)\in I^{\Lambda}_{N}(\gamma)}d_{g}c_{h} \nu_{o}(A_{k,R}(\gamma ) \cap \gamma Q_{h} \cap Q_{g}).$$
 Indeed with notation (\ref{notationSn1}) and (\ref{notationSn2}), since $I^{\Lambda}_{N}(\gamma)\subset S^{\Lambda}_{N,k}(\gamma)  \times \widetilde{S}^{\Lambda}_{N,n-k}(\gamma^{-1}) $ we have 
 \begin{equation}\label{support}
 \mathcal{R}_{k,N}(\gamma)\leq \sum_{ (g,h)\in  S^{\Lambda}_{N,k}(\gamma)  \times \widetilde{S}^{\Lambda}_{N,n-k}(\gamma^{-1}) }d_{g}c_{h} \nu_{o}(A_{k,R}(\gamma ) \cap \gamma Q_{h} \cap Q_{g}).
 \end{equation}

\subsection{Proof of Spectral Inequalities}
The fundamental tool to control the sum is nothing but the Cauchy-Schwarz inequality. 
We start by a proposition based on counting estimates of Section \ref{sec7}.

 \begin{prop}\label{CS1}
There exists $C>0$ such that for all $s\in [0,1]$, for all $\gamma \in S^{\Gamma}_{n,R}$, for all non-negative integers $N>n$, for all $k\in \{1,\dots,n\}$, for all $ v,w\in E^{\Lambda,+}_{N}(\partial X)$, we have 
 \begin{align*}\langle \pi(\gamma)v,w \rangle_{|_{A_{k,R}(\gamma)}}\leq C\times \frac{e^{(\frac{1}{2}-s)\alpha nR}e^{(2s-1)\alpha kR} }{|S^{\Lambda*}_{N,R}|} \bigg(\sum_{(h,g)\in  \widetilde{S}^{\Lambda}_{N,n-k}(\gamma^{-1}) \times  S^{\Lambda}_{N,k}(\gamma) }c^{2}_{h}  d^{2}_{g} \bigg)^{\frac{1}{2}}\end{align*}

 \end{prop}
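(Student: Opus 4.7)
The plan is a single application of the weighted Cauchy--Schwarz inequality to the raw sum defining $\mathcal{R}_{k,N}(\gamma)$, followed by an elementary identity which aligns the exponents. Set
\[
a_{g,h}:=\nu_o\bigl(A_{k,R}(\gamma)\cap\gamma Q_h\cap Q_g\bigr).
\]
From (\ref{decompoini}) and (\ref{support}) I have $\langle\pi_s(\gamma)v,w\rangle_{|_{A_{k,R}(\gamma)}}\le Ce^{s\alpha(2k-n)R}\sum_{(g,h)}d_g c_h a_{g,h}$, where the sum runs over $(g,h)\in S^{\Lambda}_{N,k}(\gamma)\times\widetilde{S}^{\Lambda}_{N,n-k}(\gamma^{-1})$. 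Writing $d_g c_h a_{g,h}=(d_g\sqrt{a_{g,h}})(c_h\sqrt{a_{g,h}})$, Cauchy--Schwarz on the double sum yields
\[
\Bigl(\sum_{g,h}d_g c_h a_{g,h}\Bigr)^{\!2}\le\Bigl(\sum_{g,h}d_g^2 a_{g,h}\Bigr)\Bigl(\sum_{g,h}c_h^2 a_{g,h}\Bigr).
\]

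I now evaluate the two marginals. Since $\{\gamma Q_h\}_{h\in S^{\Lambda*}_{N,R}}$ partitions $\partial X$, swapping the order of summation gives $\sum_h a_{g,h}\le\nu_o(A_{k,R}(\gamma)\cap Q_g)\le\nu_o(Q_g)\le Ce^{-\alpha NR}$ by the Ahlfors bound of Lemma~\ref{vitali}. Symmetrically, $\{Q_g\}_{g\in S^{\Lambda*}_{N,R}}$ is a partition, so $\sum_g a_{g,h}=\nu_o(A_{k,R}(\gamma)\cap\gamma Q_h)=\nu_o(U_{k,n}(\gamma,h))$. The key input here is the Diameters Lemma~\ref{diameterlemma}, giving $\mathrm{Diam}(U_{k,n}(\gamma,h))\le e^{\epsilon(n-2k)R}e^{-\epsilon NR}$; since $k\ge 1$ and $N>n\ge 1$ keep this diameter well below $\mathrm{Diam}(\partial X)$, $\alpha$-Ahlfors regularity of $\nu_o$ yields $\nu_o(U_{k,n}(\gamma,h))\le Ce^{\alpha(n-2k)R}e^{-\alpha NR}$. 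Combining the two marginal bounds and taking the square root,
\[
\sum_{g,h}d_g c_h a_{g,h}\le Ce^{-\alpha NR}e^{\alpha(n-2k)R/2}\Bigl(\sum_g d_g^2\Bigr)^{\!1/2}\Bigl(\sum_h c_h^2\Bigr)^{\!1/2}.
\]

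Multiplying by $Ce^{s\alpha(2k-n)R}$ and using the elementary identity
\[
s(2k-n)+\tfrac{n-2k}{2}=\bigl(\tfrac12-s\bigr)n+(2s-1)k,
\]
the full exponent collapses to $(\tfrac12-s)\alpha nR+(2s-1)\alpha kR-\alpha NR$. Invoking $|S^{\Lambda*}_{N,R}|\ge C^{-1}e^{\alpha NR}$ from Lemma~\ref{vitali}(4) rewrites $e^{-\alpha NR}$ as a constant multiple of $1/|S^{\Lambda*}_{N,R}|$, which is exactly the claimed bound. The only delicate point is verifying that the hypotheses of Lemmas~\ref{vitali} and~\ref{diameterlemma}, in particular the choice $R>r+\delta+\log(5)/\epsilon$ from~(\ref{condR}), hold uniformly in $k\in\{1,\dots,n\}$ and $N>n$; this is precisely the content of the proposition's assumption that $R$ is sufficiently large, and is the only bookkeeping one needs.
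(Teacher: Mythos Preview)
Your proof is correct and takes a genuinely different, more streamlined route than the paper's.

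The paper splits into two cases according to whether $k\le \lfloor n/2\rfloor$ or $k\ge \lfloor n/2\rfloor$. In each case it first pulls out the crude bound $\nu_o(\,\cdot\,\cap Q_g)\le C/|S^{\Lambda*}_{N,R}|$ (respectively $\nu_o(\,\cdot\,\cap Q_h)$ after using quasi-invariance), applies Cauchy--Schwarz to the outer sum, and then needs two further ingredients: Proposition~\ref{countingestimate} to bound the cardinalities $|W^{\Lambda}_{N,k}(\gamma,h)|$ and $|\widetilde W^{\Lambda}_{N,n-k}(\gamma^{-1},g)|$, and the multiplicity bound of Proposition~\ref{lastcounting} to control the overlap when re-summing. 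Your single weighted Cauchy--Schwarz, with the splitting $d_gc_ha_{g,h}=(d_g\sqrt{a_{g,h}})(c_h\sqrt{a_{g,h}})$, replaces all of this: the partition identities $\sum_h a_{g,h}=\nu_o(A_{k,R}(\gamma)\cap Q_g)$ and $\sum_g a_{g,h}=\nu_o(U_{k,n}(\gamma,h))$ absorb the counting, and the Diameters Lemma plus Ahlfors regularity handle both ranges of $k$ at once (the bound $\mathrm{Diam}(U_{k,n})\le e^{\epsilon(n-2k)R-\epsilon NR}$ is valid for every $k\in\{1,\dots,n\}$, and for $k>n/2$ only gets sharper). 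In effect you bypass Propositions~\ref{countingestimate} and~\ref{lastcounting} entirely for this step. The paper's approach does make the combinatorial structure on $S^{\Lambda*}_{N,R}$ more explicit, which may be conceptually useful elsewhere, but for the purposes of Proposition~\ref{CS1} your argument is both shorter and cleaner.
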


\begin{proof}

Let $s\in [0,1]$, let $N>n$ be non-negative integers, let $k\in \{1,\dots,n\}$ and $\gamma \in S^{\Gamma}_{n,R}$.
The proof splits in two cases. The first case consists in studying $\mathcal{R}_{k,N}(\gamma)$ with $k\leq \frac{\lfloor n \rfloor}{2}$. According to the notation (\ref{Uknparts}) and (\ref{defU}),  Inequality (\ref{support}) becomes: 
\begin{align*}
\mathcal{R}_{k,N}(\gamma)
&\leq\sum_{h\in \widetilde{S}^{\Lambda}_{N,n-k}(\gamma^{-1})} c_{h} \sum_{g\in W^{\Lambda}_{N}(\gamma,h)}d_{g}\nu_{o}(U_{k,n}(\gamma,h)\cap Q_{g}).
 \end{align*}
 Observe that Item (3) of Lemma \ref{vitali} combined with Ahlfors regularity of $\nu_{o}$ imply the existence of some $C>0$ such that for all $g,h\in S^{\Lambda*}_{N,R}:$ 

\begin{equation}\label{inegshad}
  \nu_{o}(U_{k,n}(\gamma,h)\cap Q_{g})\leq \nu_{o}( Q_{g})\leq  C\times \frac{1}{|S^{\Lambda*}_{N,R}|}.
  \end{equation}
 Therefore we have:
 \begin{align*}
\mathcal{R}_{k,N}(\gamma)&\leq  \frac{C}{|S^{\Lambda*}_{N,R}|} \sum_{h\in \widetilde{S}^{\Lambda}_{N,n-k}(\gamma^{-1})}c_{h} \sum_{g\in W^{\Lambda}_{N}(\gamma,h)}d_{g}.
\end{align*}
 
 Then, performing several times the Cauchy-Schwarz inequality, with an absorbing constant $C$ for the third inequality, we obtain:
 
 \begin{align*}
 \mathcal{R}_{k,N}(\gamma)&\leq  \frac{C}{|S^{\Lambda*}_{N,R}|} \bigg(\sum_{h\in  \widetilde{S}^{\Lambda}_{N,n-k}(\gamma^{-1})}c^{2}_{h} \bigg)^{\frac{1}{2}}\bigg(\sum_{h\in  \widetilde{S}^{\Lambda}_{N,n-k}(\gamma^{-1})} \big(\sum_{g\in W^{\Lambda}_{N}(\gamma,h)}d_{g}\big)^{2} \bigg)^{\frac{1}{2}}\\
&\leq \frac{C }{|S^{\Lambda*}_{N,R}|} \bigg(\sum_{h\in  \widetilde{S}^{\Lambda}_{N,n-k}(\gamma^{-1})}c^{2}_{h} \bigg)^{\frac{1}{2}}  \bigg(\sum_{h\in  \widetilde{S}^{\Lambda}_{N,n-k}(\gamma^{-1})} \big( |W^{\Lambda}_{N}(\gamma,h)|\times \sum_{g\in W^{\Lambda}_{N}(\gamma,h)} d^{2}_{g} \big) \bigg)^{\frac{1}{2}}\\
 \mbox{(by Proposition \ref{countingestimate})}&\leq \frac{C e^{\alpha(\frac{n}{2}-k)R}}{|S^{\Lambda*}_{N,R}|} \bigg(\sum_{h\in  \widetilde{S}^{\Lambda}_{N,n-k}(\gamma^{-1})}c^{2}_{h} \bigg)^{\frac{1}{2}} \bigg(\sum_{h\in  \widetilde{S}^{\Lambda}_{N,n-k}(\gamma^{-1})}\sum_{g\in W^{\Lambda}_{N}(\gamma,h)} d^{2}_{g} \bigg)^{\frac{1}{2}}\\
  \mbox{(by Proposition \ref{lastcounting})}&\leq \frac{C \frak{n} e^{\alpha(\frac{n}{2}-k)R}}{|S^{\Lambda*}_{N,R}|} \bigg(\sum_{(h,g)\in  \widetilde{S}^{\Lambda}_{N,n-k}(\gamma^{-1}) \times  S^{\Lambda}_{N,k}(\gamma) }c^{2}_{h}  d^{2}_{g} \bigg)^{\frac{1}{2}}.
\end{align*}
For $\frac{\lfloor n \rfloor }{2}\leq k$: First observe that using the quasi-invariance of $\nu_{o}$, combined with Lemma \ref{lesAi} and Lemma \ref{BusemannsurAi}, we have for some constant $C>0$:
\begin{align}\label{quasiinv}
\nu_{o}(A_{k,R}(\gamma ) \cap \gamma Q_{h}\cap Q_{g})\leq Ce^{\alpha(n-2k)R}\nu_{o}(\gamma^{-1}A_{k,R}(\gamma ) \cap  Q_{h}\cap \gamma^{-1} Q_{g}).
\end{align}
Therefore, the notation (\ref{Uknparts}), (\ref{defU}), Inequality (\ref{inegshad}) applied with $V_{k,n}(\gamma,g)$ in this case and Inequality (\ref{quasiinv}) just above imply:
\begin{align*}
\mathcal{R}_{k,N}(\gamma )
&\leq Ce^{\alpha(n-2k)R}\sum_{g\in S^{\Lambda}_{N,k}(\gamma)}d_{g} \sum_{h\in \tilde{W}^{\Lambda}_{N}(\gamma^{-1},g) }c_{h}\nu_{o}(V_{k,n}(\gamma,g)\cap Q_{h}) \\
&\leq \frac{Ce^{\alpha(n-2k)R}}{|S^{\Lambda*}_{N,R}|}\sum_{g\in S^{\Lambda}_{N,k}(\gamma)}d_{g} \sum_{h\in \tilde{W}^{\Lambda}_{N}(\gamma^{-1},g) }c_{h}.
\end{align*}
Now, performing again the Cauchy-Scwharz inequality we obtain 

\begin{align*}
R_{k,N}(\gamma )
&\leq\frac{Ce^{\alpha(n-2k)R}}{|S^{\Lambda*}_{N,R}|}\big(\sum_{g\in S^{\Lambda}_{N,k}(\gamma)}d^{2}_{g}\big)^{\frac{1}{2}} \bigg(\sum_{g\in S^{\Lambda}_{N,k}(\gamma)}(\sum_{h\in \tilde{W}^{\Lambda}_{N}(\gamma^{-1},g) }c_{h})^{2}\bigg)^{\frac{1}{2}}\\
&\leq\frac{Ce^{\alpha(n-2k)R}}{|S^{\Lambda*}_{N,R}|} \big(\sum_{g\in S^{\Lambda}_{N,k}(\gamma)}d^{2}_{g}\big)^{\frac{1}{2}} \bigg(\sum_{g\in S^{\Lambda}_{N,k}(\gamma)}\sum_{h\in \tilde{W}^{\Lambda}_{N}(\gamma^{-1},g) }|\tilde{W}^{\Lambda}_{N}(\gamma^{-1},g) |  c^{2}_{h}\bigg)^{\frac{1}{2}}\\
\mbox{(by Proposition \ref{countingestimate})}&\leq\frac{Ce^{\alpha(n-2k)R} e^{\alpha(k-\frac{n}{2})}}{|S^{\Lambda*}_{N,R}|} \big(\sum_{g\in S^{\Lambda}_{N,k}(\gamma)}d_{g}\big)^{\frac{1}{2}} \bigg(\sum_{g\in S^{\Lambda}_{N,k}(\gamma)}\sum_{h\in \tilde{W}^{\Lambda}_{N}(\gamma^{-1},g) }  c^{2}_{h}\bigg)^{\frac{1}{2}}\\
 \mbox{(by Proposition \ref{lastcounting})}&\leq\frac{C\frak{n}e^{\alpha(\frac{n}{2}-k)R} }{|S^{\Lambda*}_{N,R}|} \big(\sum_{g\in S^{\Lambda}_{N,k}(\gamma)}d^{2}_{g}\big)^{\frac{1}{2}} \bigg(\sum_{h\in \tilde{S}^{\Lambda}_{N,n-k}(\gamma^{-1})}c^{2}_{h}\bigg)^{\frac{1}{2}}.
\end{align*}
Thanks the decomposition (\ref{decompoini}) the new expression of a matrix coefficient associated with $v,w\in E^{\Lambda,+}_{N}(\partial X)$ reads as follows: 
\begin{equation}\label{1eredecompo}
\langle \pi_{s}(\gamma)v,w \rangle_{|_{A_{k,R}(\gamma)}}\leq C\frak{n} \frac{e^{(\frac{1}{2}-s)\alpha n R}e^{\alpha (2s-1) kR} }{|S^{\Lambda*}_{N,R}|} \big(\sum_{g\in S^{\Lambda}_{N,k}(\gamma)}d^{2}_{g}\big)^{\frac{1}{2}} \bigg(\sum_{h\in \tilde{S}^{\Lambda}_{N,n-k}(\gamma^{-1})}c^{2}_{h}\bigg)^{\frac{1}{2}},
\end{equation}

and the proof is done.

\end{proof}

\begin{remark}
So far, we have not yet used the assumption of cocompacity of the action $\Gamma$ on $(X,d)$. Thus, Proposition \ref{CS1} holds whenever $\Gamma$ is only a discrete group of isometries of $(X,d)$.
\end{remark}

We can now provide the proof of Theorem \ref{maintheo}:
\begin{proof}
First of all, to prove a spectral inequality dealing with $\pi_{s}(f)$ for $f\in C_{c}(\Gamma)$ we use Lemma \ref{reduction}: we shall control the following expression $\sum_{\gamma\in S^{\Gamma}_{n,R}} f(\gamma) \langle \pi_{s}(\gamma) v,w\rangle$, with $f$ a positive function supported on $S^{\Gamma}_{n,R}$ with some $R>0$ and $v,w\in  E^{\Lambda,+}_{N}(\partial X)$ with $N>n+1$. 

 Proposition \ref{CS1} provides a constant $C>0$ such that for all $k\in \{1,\dots,n \}$ 

\begin{align*}
\sum_{S^{\Gamma}_{n,R}}f(\gamma)\langle \pi_{s}(\gamma)v,w\rangle_{|_{A_{k,R}(\gamma)}}
&\leq   C  \frac{e^{(\frac{1}{2}-s)\alpha nR}e^{(2s-1)\alpha kR}}{|S^{\Lambda *}_{N,R}|}\sum_{\gamma \in S^{\Gamma}_{n,R}} f(\gamma)\bigg(\sum_{(h,g)\in  \widetilde{S}^{\Lambda}_{N,n-k}(\gamma^{-1}) \times  S^{\Lambda}_{N,k}(\gamma) }c^{2}_{h}  d^{2}_{g} \bigg)^{\frac{1}{2}} \\
 \mbox{by the Cauchy-Schwarz inequality}&\leq C{ \frac{ e^{(\frac{1}{2}-s)\alpha nR}e^{(2s-1)\alpha kR} }{|S^{\Lambda*}_{N,R}|}} \|f\|_{2}\bigg(\sum_{\gamma \in S^{\Gamma}_{n,R}} \sum_{(h,g)\in  \widetilde{S}^{\Lambda}_{N,n-k}(\gamma^{-1}) \times  S^{\Lambda}_{N,k}(\gamma) }c^{2}_{h}  d^{2}_{g} \bigg)^{\frac{1}{2}}\\
 \mbox{by Proposition \ref{mfinal}}\mbox{ and (\ref{norm2})}&\leq C\frak{m} e^{(\frac{1}{2}-s)\alpha nR}e^{(2s-1)\alpha kR}  \|f\|_{2}\|v\|_{2}\|w\|_{2}.
\end{align*}
We treat the case  $0<s \neq \frac{1}{2}<1$ first.
Take the sum over $k$ to obtain

\begin{align*}
\sum_{S^{\Gamma}_{n,R}}f(\gamma)\langle \pi_{s}(\gamma)v,w\rangle &\leq C\frak{m}\frac{e^{(\frac{1}{2}-s)\alpha nR}-e^{(s-\frac{1}{2})\alpha nR}}{1-e^{(2s-1)\alpha R}}  \|f\|_{2}\|v\|_{2}\|w\|_{2}.
\end{align*}
 If $s<\frac{1}{2},$ then $1-e^{(2s-1)\alpha R}\geq 1-e^{(2s-1)\alpha }$ since $R>1$. Hence, 
 \begin{align}\label{s1}
 \sum_{S^{\Gamma}_{n,R}}f(\gamma)\langle \pi_{s}(\gamma)v,w\rangle \leq C\frak{m}  \big( 1+\omega_{ \frac{1}{2}-s}(nR)\big) \|f\|_{2}\|v\|_{2}\|w\|_{2}. 
 \end{align}
  If, $s>\frac{1}{2}$ then $1-s<\frac{1}{2}$. Then we have 
 $$\sum_{S^{\Gamma}_{n,R}}f(\gamma)\langle \pi_{s}(\gamma)v,w  \rangle= \sum_{S^{\Gamma}_{n,R}}f(\gamma)\langle \pi_{1-s}(\gamma^{-1})v,w  \rangle=\sum_{S^{\Gamma}_{n,R}} \widecheck{f}(\gamma)\langle \pi_{
 1-s}(\gamma)v,w  \rangle,$$ where $\widecheck{f}(\gamma)=f(\gamma^{-1})$. Since $\|f\|_{2}=\|\widecheck{f}\|_{2}$
 the previous inequality (\ref{s1}) implies for $s>\frac{1}{2}$
 $$\sum_{S^{\Gamma}_{n,R}}f(\gamma)\langle \pi_{s}(\gamma)v,w\rangle \leq C\frak{m}  \big( 1+\omega_{s- \frac{1}{2}}(nR)\big) \|f\|_{2}\|v\|_{2}\|w\|_{2}. $$

For $s=\frac{1}{2}$ we obtain then:
\begin{align*}
\sum_{S^{\Gamma}_{n,R}}f(\gamma)\langle \pi_{s}(\gamma)v,w\rangle &   \leq  C\frak{m}(1+nR)  \|f\|_{2}\|v\|_{2}\|w\|_{2}.
\end{align*}

Hence, we have proved that there exists $R,C>0$ such that for all $n$, for all $f$ supported on $S^{\Gamma}_{n,R}$ and for all $s\in [0,1]$ we have: $\|\pi_{s}(f)\|_{op}\leq C\bigg(1+\omega_{|s-\frac{1}{2}|}(nR )\bigg) \|f\|_{2}.$
\end{proof}

\section{ A remark about $1$-cohomology of slow growth boundary representations }\label{sec9}
Let $\Gamma$ be a group acting cocompactly on $(X,d)$ a roughly geodesic, $\epsilon$-good, $\delta$-hyperbolic space and fix $o$ a base point of $X$. Consider its Patterson-Sullivan measure denoted by $\nu_{o}$ of dimension $\alpha$. Consider the family of representations defined in (\ref{repre}) of Section \ref{sec4}. As an immediate corollary of Theorem \ref{maintheo}, we obtain that there exists $C>0$ such that the representations $\pi_{s}$ satisfies for $ 0 \leq s\neq\frac{1}{2}\leq 1$ and for all $\gamma \in \Gamma$:
\begin{equation}\label{slowgrowth}
 \|\pi_{s}(\gamma)\|_{op}\leq C \frac{e^{\alpha |\frac{1}{2}-s| |\gamma|}}{|1-e^{2s-1}|}.
 \end{equation}

The action of $\Gamma$ on $\partial X \times \partial X$ provides both interesting representations and cocycles. More precisely: consider the measure space $(\partial X \times \partial X,m_{BM})$
 where the $m_{BM}$ is the Bowen-Margulis measure defined as 
 \begin{equation}
 d m_{BM}(\xi,\eta):=\frac{d\nu_{o}(\xi)d\nu_{o}(\eta)}{d^{2\alpha }(\xi,\eta)}.
 \end{equation}
 It tuns out that the diagonal action of $\Gamma \curvearrowright (\partial X \times \partial X,m_{BM}) $ is measure preserving. Thus, we can consider 
 the operator $\pi(\gamma)$ defines as $$\pi(\gamma)F(\xi,\eta)=F(\gamma^{-1}\xi,\gamma^{-1}\eta),$$
 for all $\gamma \in \Gamma$, for $\xi,\eta\in \partial X$ and for $F\in L^{p}(\partial X \times \partial X,m_{BM})$ for $p>0$. Hence, $\pi$ defines an isometric representation on a Banach space:
 \begin{align}\label{pip}
 \pi: \Gamma \rightarrow Iso(L^{p}(\partial X \times \partial X,m_{BM})),
  \end{align}
  for $p>0$.
 In the paper \cite{Ni}, Nica proved the following theorem:
\begin{theo} (B. Nica)
Let $\Gamma$ be a hyperbolic group. Then, for $p$ large enough, the above representation $\pi$ in (\ref{pip}) admits a proper 1-cocycle. 
\end{theo}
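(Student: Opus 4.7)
The plan is to realize $\rho_s$ as the tensor-square of the boundary representation $\pi_s$ on the doubled boundary. Set $\mathcal{H}:=L^2(\partial X \times \partial X, \nu_o \otimes \nu_o)$, identified with $L^2(\partial X, \nu_o) \otimes L^2(\partial X, \nu_o)$, and define $\rho_s := \pi_s \otimes \pi_s$; explicitly,
\[
\rho_s(\gamma) F(\xi, \eta) := e^{s\alpha(\beta_{\xi}(o,\gamma o) + \beta_{\eta}(o, \gamma o))} F(\gamma^{-1}\xi, \gamma^{-1}\eta).
\]
Since $\tfrac{d\gamma_*(\nu_o \otimes \nu_o)}{d(\nu_o \otimes \nu_o)}(\xi, \eta) = e^{\alpha(\beta_\xi(o, \gamma o) + \beta_\eta(o, \gamma o))}$, the exponent at $s = 1/2$ is exactly its square root, so $\rho_{1/2}$ is unitary on $\mathcal{H}$.

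The slow growth estimate is obtained by specializing Theorem \ref{maintheo} to the Dirac mass $f = \delta_\gamma$ (with $n = \lfloor |\gamma|/R \rfloor$), which gives $\|\pi_s(\gamma)\|_{op} \leq C(1 + \omega_{|s-1/2|}(|\gamma|))$. For $s \in [0, 1/2]$ the definition (\ref{defomeg}) yields $\omega_{1/2-s}(t) \leq e^{(1/2-s)\alpha t}/(1 - e^{(2s-1)\alpha})$, and since $\rho_s$ is a tensor square, $\|\rho_s(\gamma)\|_{op} \leq \|\pi_s(\gamma)\|_{op}^2$; squaring and absorbing the ``$+1$'' into the constant produces the advertised bound.

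For the proper 1-cocycle, I would invoke Nica's theorem recalled at the end of the excerpt: for $p \geq p_0$ large enough, the isometric Koopman action $\pi$ on $L^p(\partial X \times \partial X, m_{BM})$ admits a proper 1-cocycle $b_p$. Set $\varepsilon := \tfrac{1}{2} - \tfrac{1}{p_0}$, which is positive provided $p_0 > 2$. For $s \in (0, 1/2 - \varepsilon]$ put $p := 1/s \geq p_0$. The map $T_s : F \mapsto F \cdot d_o^{-2\alpha/p}$ is an isometric isomorphism $L^p(m_{BM}) \to L^p(\nu_o \otimes \nu_o)$, and the conformal rule $d_o(\gamma^{-1}\xi, \gamma^{-1}\eta) = e^{\frac{\epsilon}{2}(\beta_\xi(o, \gamma o) + \beta_\eta(o, \gamma o))} d_o(\xi, \eta)$ combined with $\alpha = \epsilon D$ shows that $T_s$ intertwines Nica's action with $\pi_{1/p} \otimes \pi_{1/p} = \rho_s$ on $L^p(\nu_o \otimes \nu_o)$. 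Hence $\tilde{b}_s(\gamma) := T_s(b_p(\gamma))$ is a $\rho_s$-cocycle with values in $L^p(\nu_o \otimes \nu_o)$, and since $\nu_o \otimes \nu_o$ is a probability measure and $p \geq 2$, it embeds continuously in $\mathcal{H}$.

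The main obstacle will be establishing properness of $\tilde{b}_s$ in the $L^2$-norm: H\"older's inequality only gives $\|\cdot\|_{L^2} \leq C\|\cdot\|_{L^p}$, so $L^p$-properness of Nica's cocycle does not formally propagate to $L^2$. I would resolve this by opening up Nica's explicit geometric construction, which builds the cocycle from indicator functions of boundary cylinders, and then exploit the Ahlfors regularity of $\nu_o$ to pass from the $L^p(m_{BM})$-norms (proportional to Bowen-Margulis masses of cylinders) to comparable $L^2(\nu_o \otimes \nu_o)$-lower bounds in terms of $|\gamma|$. The boundary case $s = 0$ requires separate treatment, as the substitution $p = 1/s$ degenerates; it can be handled either by a monotone passage to the limit or by citing the \emph{a-T-menability} of hyperbolic groups directly.
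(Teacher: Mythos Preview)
First a clarification: the displayed statement is Nica's theorem, which the paper merely \emph{cites}; the proof environment that follows it in the source is actually the proof of Theorem~\ref{theolast}, and your proposal is likewise an attempt at Theorem~\ref{theolast}. On that reading, your setup (define $\rho_s=\pi_s\otimes\pi_s$, unitarity at $s=\tfrac12$, slow growth from Theorem~\ref{maintheo} applied to $f=\delta_\gamma$) agrees with the paper.

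Where you diverge is in producing the proper $1$-cocycle. You try to import Nica's $L^p$-cocycle via the multiplier $T_s:F\mapsto F\cdot d_o^{-2\alpha/p}$ and then embed $L^p(\nu_o\otimes\nu_o)\hookrightarrow L^2$. Two issues. (i) The intertwining relation you state is off: the conformal rule gives $d_o(\gamma^{-1}\xi,\gamma^{-1}\eta)=e^{\frac{\epsilon}{2}(\beta_\xi+\beta_\eta)}d_o(\xi,\eta)$, so $T_s$ conjugates Nica's Koopman action to $\rho_s$ only when $s=\epsilon/p$, not $s=1/p$; this changes your $\varepsilon$ and the admissible range of $s$. (ii) More seriously, the gap you flag is genuine and your proposed repair is not the right one: Nica's cocycle in \cite[Proposition~7.1]{Ni} is \emph{not} built from indicators of cylinder sets but is precisely the Busemann-difference function $c(\gamma)(\xi,\eta)=\beta_\xi(o,\gamma o)-\beta_\eta(o,\gamma o)$. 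Once you transport it through $T_s$ you land exactly on the paper's explicit cocycle
\[
b_s(\gamma)(\xi,\eta)=\frac{\beta_\xi(o,\gamma o)-\beta_\eta(o,\gamma o)}{d_o^{\,2s\alpha/\epsilon}(\xi,\eta)},
\]
and there is no way to avoid estimating its $L^2$-norm from below by hand. The paper does this directly: it checks the cocycle identity via the conformal formula, shows $b_s\in L^2$ for $0\le s<\epsilon/4$ using $|b_s(\gamma)|\le 2|\gamma|\,d_o^{-2s\alpha/\epsilon}$ and \cite[Lemma~5.3]{Ni}, and then proves properness by decomposing the integral over the horospherical pieces $A_{j,R}(\gamma)\times A_{k,R}(\gamma)$ and bounding the resulting double sum from below. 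Your black-box route through Nica's theorem therefore cannot close; the paper's argument is the explicit computation you would ultimately have to perform.
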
 
In the same vein, based on the ideas of Nica \cite[Proposition 7.1]{Ni} we give a proof of Theorem \ref{theolast}:
 
\begin{proof}

Consider the representation 

\begin{equation}\rho_{s}:=\pi_{s} \otimes \pi_{s}:\Gamma \rightarrow \mathbb{B}(L^{2}(\partial X \otimes \partial X,\nu_{o}\otimes \nu_{o})),\end{equation} with $\pi_{s}$ defined by the expression (\ref{repre}). Thus, by (\ref{slowgrowth}), the representation $\pi_{s} \otimes \pi_{s}$ is a slow growth representation acting on a Hilbert space satisfying \begin{equation}\label{slowgrowth}
 \|\rho_{s}(\gamma)\|_{op}\leq C \frac{e^{ |1-2s| \alpha|\gamma|}}{|1-e^{2s-1}|^{2}}.
 \end{equation} 

Now, let us define the following $1$-cocycle associated to $\rho_{s}$ for $0\leq s<\frac{\epsilon}{4}$ as:
\begin{equation}
b_{s}:\gamma \in \Gamma \longmapsto \bigg((\xi,\eta)\mapsto \frac{\beta_{\xi}(o,\gamma o)-\beta_{\eta}(o,\gamma o)}{d_{o}^{\frac{2s}{\epsilon}\alpha }(\xi,\eta)}\bigg)\in L^{2}(\partial X \otimes \partial X,\nu_{o}\otimes \nu_{o}).
\end{equation}
The condition $0\leq s<\frac{\epsilon}{4}$ is to ensure that the cocycle in well defined in $L^{2}(\partial X \otimes \partial X,\nu_{o}\otimes \nu_{o})$. See below.\\

Indeed we have:
\begin{align*}
b_{s}(\gamma_{1}\gamma_{2})(\xi,\eta)&= \frac{\beta_{\xi}(o,\gamma_{1} \gamma_{2} o)-\beta_{\eta}(o,\gamma_{1} \gamma_{2} o)}{d_{o}^{\frac{2s}{\epsilon}\alpha }(\xi,\eta)}\\
&= b_{s}(\gamma_{1})(\xi,\eta) +\frac{\beta_{\gamma_{1}^{-1}\xi}( o,\gamma_{2} o)-\beta_{\gamma_{1}^{-1}\eta}( o, \gamma_{2} o)}{d_{o}^{\frac{2s}{\epsilon}\alpha}(\xi,\eta)} \\
&= b_{s}(\gamma_{1})(\xi,\eta) +\frac{e^{s\alpha (\beta_{\xi}(o,\gamma_{1}  o) +\beta_{\xi}(o,\gamma_{1} o)) } \big(\beta_{\gamma_{1}^{-1}\xi}( o,\gamma_{2} o)-\beta_{\gamma_{1}^{-1}\eta}( o, \gamma_{2} o)\big)}{\big(e^{\frac{\epsilon}{2} (\beta_{\xi}(o,\gamma_{1}  o) +\beta_{\xi}(o,\gamma_{1} o))}d_{o}(\xi,\eta)\big)^{\frac{2s}{\epsilon}\alpha }}\\
&= b_{s}(\gamma_{1})(\xi,\eta) +\frac{e^{s\alpha (\beta_{\xi}(o,\gamma_{1}  o) +\beta_{\xi}(o,\gamma_{1} o)) }  \big(\beta_{\gamma_{1}^{-1}\xi}( o,\gamma_{2} o)-\beta_{\gamma_{1}^{-1}\eta}( o, \gamma_{2} o)\big)}{d^{\frac{2s}{\epsilon}\alpha }_{ o}(\gamma^{-1}_{1}\xi,\gamma_{1}^{-1}\eta)}\\
&= b_{s}(\gamma_{1})(\xi,\eta) +\rho_{s}(\gamma_{1})b_{s}(\gamma_{2})(\xi,\eta).
\end{align*}

Observe that 
\begin{align*}
|b_{s}(\gamma)(\xi,\eta)|\leq \frac{2|\gamma|}{d^{\frac{2s}{\epsilon}\alpha }_{o}(\xi,\eta)}.
\end{align*}
By Lemma 5.3 of \cite{Ni}, if $p>\alpha $ we have $d\in L^{p}(\partial X \times \partial X, m_{BM})$. Thus, write 
\begin{align*}
\|b_{s}(\gamma)\|^{2}_{2}&\leq 4 |\gamma|^{2}\int_{\partial X \times \partial X}\frac{1}{d_{o}^{\frac{4s\alpha}{\epsilon} }(\xi,\eta) } d\nu_{o}(\xi)d\nu_{o}(\eta)\\
&= 4|\gamma|^{2}\int_{\partial X \times \partial X}\frac{d_{o}^{p}(\xi,\eta)}{d_{o}^{2\alpha }(\xi,\eta) } d\nu_{o}(\xi)d\nu_{o}(\eta)\\
&=4|\gamma|^{2}\int_{\partial X \times \partial X}d_{o}^{p}(\xi,\eta)dm_{BM}(\xi,\eta),
\end{align*}
with $p=2\alpha(1-\frac{2}{\epsilon}s)$. So we need to have $p>\alpha $, that is to say $s<\frac{\epsilon}{4}$.\\
The fact that $b_{s}$ is proper follows from the computation of Proposition 7.1 of \cite{Ni}. More precisely, Nica's method consists in splitting the integral over the sets $A_{k,R}(\gamma)$. By setting  $s'=\frac{s}{\epsilon}>0$ in the following computation we have
\begin{align*}
\|b_{s}(\gamma)\|^{2}_{2}&=\sum^{n}_{j,k=1}\int_{A_{j,R}(\gamma) \times A_{k,R}(\gamma)}|(\xi,\gamma o)_{o}-(\eta,\gamma o)_{o}|^{2}e^{2s'\alpha (\xi,\eta)}d\nu_{o}(\xi)d\nu_{o}(\eta)\\
&\geq e^{-2s'\alpha \delta}\sum^{n}_{j,k=1}\int_{A_{j,R}(\gamma) \times A_{k,R}(\gamma)}|(\xi,\gamma o)_{o}-(\eta,\gamma o)_{o}|^{2}e^{2s'\alpha \min{\{(\xi,\gamma),(\gamma,\eta)}\} }d\nu_{o}(\xi)d\nu_{o}(\eta)\\
&\geq e^{-2s'\alpha \delta}\sum_{1\leq j<k\leq n}|(k-1-j)R|^{2}e^{(2s'-1)\alpha jR }e^{-k\alpha R}\\
&\geq e^{-2s'\alpha \delta}R^2 e^{-\alpha R}\sum_{1\leq j\leq k\leq n}|(k-j)|^{2}e^{(2s'-1)\alpha jR }e^{-k\alpha R}.
\end{align*}
Set now $S_{n}:=\sum_{1\leq j\leq k\leq n}|(k-j)|^{2}e^{(2s'-1)\alpha jR }e^{-k\alpha R}.$ Observe that
\begin{align*}
 S_{n+1}-S_{n}&\geq \sum^{n-1}_{k=1}|(k-n)R|^{2}e^{(2s'-1)\alpha (n-1)R }e^{-k\alpha R}\\
 & = \sum^{n}_{k=1}|(k-1)R|^{2}e^{(2s'-1)\alpha (n-1)R }e^{-(k+n)\alpha R}\\
 & = e^{-(2s'-1)\alpha R }e^{2s'\alpha nR }\sum^{n-1}_{k=0}|(k-1)R|^{2}e^{-k\alpha  R}\\
 &\geq  R^{2}e^{-(2s'-1)\alpha R } \times (ne^{s'\alpha nR }).
\end{align*}
Hence, $\|b_{s}(\gamma)\|\to+\infty$ exponentially fast for any $0\leq s\neq\frac{1}{2}\leq1$. \\To conclude, the representation  $\pi_{s} \otimes \pi_{s}:\Gamma \rightarrow \mathbb{B}(L^{2}(\partial X \otimes \partial X,\nu_{o}\otimes \nu_{o}))$ admits a proper cocycle if $0\leq s<\frac{\epsilon}{4}$.  \\
In this generality, the previous computations hold for hyperbolic groups possessing property (T). But it is well known that such groups cannot have an unitary representation with a proper 1-cocycle. We refer to \cite{BC} for more details on property (T).
Since $ \rho_{\frac{1}{2}}$ is an unitary representation, there exists $\frac{1}{2}>\varepsilon>0$ such that 
  $\rho_{s}$ for $s\in [0,\frac{1}{2}-\varepsilon[$ admits a proper 1-cocycle.
\end{proof}

\end{document}